\renewcommand{\eqref}[1]{(\ref{#1})}
\theoremstyle{plain}
\newtheorem{theorem}{Theorem}[section]
\newtheorem{corollary}[theorem]{Corollary}
\newtheorem{lemma}[theorem]{Lemma}
\newtheorem{proposition}[theorem]{Proposition}
\theoremstyle{definition}
\newtheorem{definition}[theorem]{Definition}
\newtheorem{remark}[theorem]{Remark}
\numberwithin{equation}{section}
\begin{document}
	
	\newcommand{\mMC}{\text{MC}}
	\newcommand{\md}{\text{d}}
	\newcommand{\comm}[2][red]{\textbf{\textcolor{#1}{#2}}} 
	\newcommand{\imp}[1]{\textbf{\emph{#1}}} 
	\newcommand{\eqd}{$\coloneqq$ } 
	\newcommand{\meqd}{\coloneqq} 
	\newcommand{\mad}{\text{ad} }
	\newcommand{\minc}[1]{\underset{#1}{\in}}
	\newcommand{\meqc}[1]{\underset{#1}{=}}
	\newcommand{\mtr}{\text{tr}}
	\newcommand{\abs}[1]{\lvert #1 \rvert}
	\newcommand{\ecomm}[2][green]{\textbf{\textcolor{#1}{#2}}} 
	\newcommand{\dossier}[2][blue]{{\textcolor{#1}{#2}}} 
	\newcommand{\rnum}[1]{\uppercase\expandafter{\romannumeral #1\relax}} 
	\newcommand{\mId}{\text{Id}}
	\newcommand{\mand}{\text{and}}
	\newcommand{\mst}{\text{s.t.}}
	\newcommand{\mmF}{\mathfrak{F}}
	\newcommand{\mmg}{\mathfrak{g}}
	\newcommand{\mmS}{\mathfrak{S}}
	\newcommand{\mmMC}{\mathfrak{MC}_\bullet}
	\newcommand{\mHom}{\text{Hom}}
	\newcommand{\mcC}{\mathcal{C}}
	\newcommand{\mcA}{\mathcal{A}}
	\newcommand{\mcD}{\mathcal{D}}
	\newcommand{\mcO}{\mathcal{O}}
	\newcommand{\mObj}{\text{Obj}}
	\newcommand{\mop}{\text{op}}
	\newcommand{\mSimp}{\text{Simp}}
	\newcommand{\mSet}{\text{Set}}
	\newcommand{\mcoker}{\text{coker}}
	\newcommand{\mker}{\text{ker}}
	\newcommand{\mIm}{\text{Im}}
	\newcommand{\mcone}{\text{cone}}
	\newcommand{\mmod}{\text{mod}~}
	\newcommand{\mwith}{\text{with}}
	\newcommand{\mnew}{\text{new}}
	\newcommand{\mby}{\text{by}}
	\newcommand{\mmin}{\text{min}}
	\newcommand{\ubr}[1]{\underbrace{#1}}
	\newcommand{\nonu}{\nonumber}
	\newcommand{\mMF}{\mmF}
	\newcommand{\minitial}{\text{initial}}
	\newcommand{\mcurv}{\text{curv}}
	\newcommand{\mhK}{\mathbb{K}} 
	\newcommand{\mStub}{\text{Stub}}
	\newcommand{\poly}{\text{poly}}
	\newcommand{\LtR}{{\tilde{L}_{\mathbb{R}}}}
	\newcommand{\LR}{{L_\mathbb{R}}}
	\newcommand{\Lt}{\tilde{L}}
	\newcommand{\mBCH}{\mathrm{BCH}}
	\newcommand{\mfinal}{\mathrm{final}}
	\newcommand{\mCom}{\mathrm{Com}_\infty}
	\newcommand{\kDerCom}{\emph{Der}_{\mCom}} 
	\newcommand{\DerCom}{\mathrm{Der}_{\mCom}} 
	\newcommand{\mDef}{\mathrm{Def}}
	\newcommand{\mDefCom}{\mDef_{\mCom}}
	\newcommand{\mP}{\mathcal{P}} 
	\newcommand{\mFree}{\mathbb{F}} 
	\newcommand{\mCoFree}{\mathbb{F}^{\mathrm{C}}} 
	\newcommand{\mPd}{\mP^{\vee}} 
	\newcommand{\mdg}{\mathrm{dg}}
	\newcommand{\LiA}{$L_\infty$-algebra }
	\newcommand{\LiAn}{$L_\infty$-algebra}
	\newcommand{\sLiA}{$\mmS L_\infty$-algebra }
	\newcommand{\sLiAn}{$\mmS L_\infty$-algebra}
	\newcommand{\Li}{$L_\infty$}
	\newcommand{\sLi}{$\mmS L_\infty$}
	\newcommand{\antishriek}{\text{\raisebox{\depth}{\textexclamdown}}}
	\newcommand{\mBVi}{{BV}_{\infty}}
	\newcommand{\BVi}{$\mBVi$}
	\newcommand{\mHTT}{\mathrm{HTT}}
	\newcommand{\mMor}{\mathrm{Mor}}

	\title{An Obstruction Theory for the Existence of Maurer-Cartan Elements in curved $L_\infty$-algebras and an Application in Intrinsic Formality of $P_\infty$-Algebras.}
	
	\author{Silvan Schwarz}
	\thanks{The author has been partially supported by the ERC starting grant 678156 GRAPHCPX}
	\address{ETH Z\"urich}
	\email{silvan.schwarz@math.ethz.ch}

	\date{\today}
	
	\begin{abstract}
		Let $\mmg$ be a curved $L_\infty$-algebra endowed with a complete filtration $\mmF \mmg$. Suppose there exists an integer $r \in \mathbb{N}_0$ for which the curvature $\mu_0$ satisfies $\mu_0 \in \mmF_{2r+1} \mmg$ and the spectral sequence yields $E_{r+1}^{p,q} =0$ for $p,q$ with $p+q=2$. We prove that then a Maurer-Cartan element exists. In addition, we show, as a typical application, that for $P$ a possibly inhomogeneous Koszul operad with generating set in arities 1,2 (e.g. $P$=Com,As,BV,Lie,Ger), a $P_\infty$-algebra $A$ is intrinsically formal if its twisted deformation complex $\mathrm{Def}(H(A)\stackrel{\mathrm{id}}{\to} H(A))$ is acyclic in total degree 1.
		
	\end{abstract}	
	\maketitle
	\tableofcontents
	\section{Introduction}
	\label{Motivation}
	
	Maurer-Cartan elements play an important role when it comes to finding morphisms which are compatible with some given structures, like the study of $\infty$-morphisms between $P_\infty$-algebras, the study of intrinsic formality of $P_\infty$-algebras or the existence of curved twisting morphisms between co-nilpotent curved co-properads and (not necessarily augmented) dg properads, to name a few.\\
	We show in Theorem \ref{MainThm} that for a curved $L_\infty$-algebra $\mmg$ endowed with a descending bounded above and complete filtration $\mmg= \mmF_1 \mmg \supset \mmF_2 \mmg \supset \ldots$ there is the following implication: If there exists a $r \in \mathbb{N}_0$ for which the curvature satisfies $\mu_0 \in \mmF_{2r+1} \mmg =0$ and the $r+1$st page of the spectral sequence of $\mathfrak{g}$ vanishes $E_{r+1}^{p,q} = 0$ for all $p,q$ with $p+q=2$, a Maurer-Cartan element always exists. Moreover, this Maurer-Cartan element lies in $\mmF_{r+1} \mmg$.\\
	In the second part of this paper we focus on an application of this result in the study of intrinsic formality and formulate a sufficient condition for e.g. $BV_\infty$-algebras\footnote{For the operad $BV_\infty$ as defined in \cite{TonksBV}, Section 1.4.}, or more general $P_\infty$-algebras for $P$ being a possibly \emph{inhomogeneous Koszul operad}\footnote{That is to extend the Definition of being Koszul to operads that do not only involve quadratic but also linear relations, see \cite{TonksBV}, Appendices A, B. Sometimes, this is also refered to as \emph{linear quadratic Koszul operad}} generated in arities 1,2, to be intrinsically formal.\\
	To translate the search of intrinsic formality into the language of Maurer-Cartan elements, we recall the notion of \emph{deformation complex}, an $L_\infty$-algebra\footnote{It is a $\mmS L_\infty$-algebra to be precise, that means there is an additional minor modification on the degrees.} which has the property that its elements correspond to $U(P^\antishriek)$-co-algebra morphisms (where $U:\mathrm{dgVect} \to \mathrm{grVect}$ denotes the forgetful functor) and moreover Maurer-Cartan elements coincide with $\infty$-morphisms, i.e. $U(P^\antishriek)$-co-algebra-morphisms of the co-free $U(P^\antishriek)$-algebras $\mathbb{F}_{U(P^\antishriek)}^c(A)$ and $\mathbb{F}_{U(P^\antishriek)}^c(A)$ that in addition are also compatible with the differentials emerging from the relative bar-constructions.\\
	However, since the study of intrinsic formality focuses on $\infty$-quasi-isomorphisms rather than $\infty$-morphisms, this falls short. Therefore, to establish the existence of an $\infty$-quasi-isomorphism between two $P_\infty$-algebras $A$ and $B$, we start with a quasi-isomorphism of the underlying dg vector spaces $f: (A,d_A) \to (B,d_B)$. This map induces a $U(P^\antishriek)$-co-algebra-morphism $F: \mathbb{F}_{U(P^\antishriek)}^c (U(A)) \to \mathbb{F}_{U(P^\antishriek)}^c (U(B))$ that by construction satisfies the `quasi-isomorphism-condition' but fails to constitute an $\infty$-morphism.\\
	Our approach is to look at the deformation complex twisted by $F$. Twisting an $L_\infty$-algebra means to look at the same underlying graded vector space but with brackets slightly altered. One of the main feature of twisted $L_\infty$-algebras is that $x$ is a Maurer-Cartan element of the $L_\infty$-algebra twisted by $F$ if and only if $x+F$ is a Maurer-Cartan element of the original non-twisted $L_\infty$-algebra. Since twisting an $L_\infty$-algebra by a non-Maurer-Cartan element leads to a curved $L_\infty$-algebra, that is an $L_\infty$-algebra with a non-vanishing 0-bracket, a twisted deformation complex does in general provide us merely with a curved $L_\infty$-algebra. We endow the twisted deformation complex with a descending bounded above and complete filtration compatible with the curved $L_\infty$-algebra structure such that elements of filtration degree $\geq 2$ do not alter the part relevant for the `quasi-isomorphism' condition.
	If we can prove the existence of a Maurer-Cartan element on the twisted deformation complex that carries at least filtration degree 2, this yields an $\infty$-morphism that also is an $\infty$-quasi-isomorphism as it behaves as $F$ with regards to this question.\\
	In this manner, the search of $\infty$-quasi-isomorphism can be conducted by finding Maurer-Cartan elements on curved $L_\infty$-algebras and as such gives a typical application of Theorem \ref{MainThm}.\\
	Moreover, under the assumption of $P$ being a possibly inhomogeneous Koszul operad with the generating set in arities 1,2 the study of intrinsic formality of $P_\infty$-algebras provides a framework in which the choice of $f$ to be the identity-morphism (and hence in particular a quasi-isomorphism of dg vector spaces) satisfies all the requirements of Theorem \ref{MainThm}, as we will explain in Theorem \ref{Main Application Theorem}.\\
	Theorem \ref{MainThm} extends previous work on obstruction theoretical approaches to the existence of Maurer-Cartan elements on differential graded Lie-algebras (see \cite{TonksBV}, Theorem 52) to curved $L_\infty$-algebras.\\
	\subsection{Overview of Results}
	\label{introduction}
	In this paper we deal with curved $L_\infty$-algebras (introduced in Section \ref{preliminaries}). In short, a curved $L_\infty$-algebra differs from a `usual' non-curved $L_\infty$-algebra by also allowing for a non-trivial zero-bracket $\mu_0$. The $L_\infty$-algebra equations it has to satisfy are the same as in the non-curved case but with the sums extended to start at zero (cf. Equation \eqref{LinftyEq}).\\
	For the scope of this paper we do focus exclusively on curved $L_\infty$-algebras which are endowed with a descending, bounded above and complete filtration that is compatible with the curved $L_\infty$-algebra structure. This means for a curved $L_\infty$-algebra $\mmg$ being of such type it must satisfy $\mmg = \mmF_1 \mmg \supset \mmF_2 \mmg \supset \mmF_3 \mmg \supset \ldots$, the degree of filtration must add under $L_\infty$-algebra brackets, that is to say $x_i \in \mmF_{k_i} \mmg$ for $i=1, \ldots, n$ implies $\mu_n ( x_1, \ldots, x_n) \in \mmF_{k_1 + \ldots + k_n} \mmg$, and $\mmg = \lim\limits_{\leftarrow} \mmg /\mmF_p \mmg$ has to hold.\\
	This additional requirement in turn allows us to speak about the Maurer-Cartan equation (notice that it starts at $n=0$), that is
	\begin{align}
		\label{MC-Eq unsshifted} 
		\sum_{n=0}^{\infty} \frac{1}{n!} \mu \big((x[1])^{\odot n}\big)=0
	\end{align}
	and call elements satisfying this equation Maurer-Cartan elements.\\
	In contrast to the setting of non-curved $L_\infty$-algebras, zero is not a Maurer-Cartan element any more and the natural question comes up if for a given curved $L_\infty$-algebra a Maurer-Cartan element exists.\\
	Although $(\mmg, \mu_1)$ is not a filtered complex (the `differential' fails to square to zero), we can exploit $\mu_1^2 (x) =- \mu_2 (\mu_0,x)$, i.e. we have that the differential squares to zero up to the filtration degree of $\mu_0$. More precisely, we apply the construction of a spectral sequence of a filtered complex on $(\mmg, \mu_1)$ to construct $E_0,E_1, \ldots, E_{r+1}$, which we still call pages of the spec.seq. by abuse of notation, in the usual manner provided the curvature $\mu_0$ lies in $\mmF_{2r+1} \mmg$.\\
	We show that under the sole premise of having a vanishing $r+1$st page of spec.seq. for total degree 2, i.e. $E_{r+1}^{p,q}=0$ for $p+q=2$, and the curvature satisfying $\mu_0 \in \mmF_{2r+1} \mmg$, a curved $L_\infty$-algebra gives rise to a Maurer-Cartan element.
	\begin{theorem}
		\label{MainThm}
		Let $\mmg$ be a curved $L_\infty$-algebra equipped with descending, bounded above and complete filtration $	\mmg= \mmF_1 \mmg \supset \mmF_2 \mmg \supset \mmF_3 \mmg \supset \ldots$	compatible with the curved $L_\infty$-algebra structures.\\
		If there exists an integer $r \in \mathbb{N}_0$ for which the curvature $\mu_0$ of $\mmg$ satisfies $\mu_0 \in \mmF_{2r+1} \mmg$ and the spectral sequence of $\mmg$ vanishes at the $r+1$st page for total degree 2, i.e. $E_{r+1}^{p,q}=0$ for all $p,q$ with $p+q=2$, then there exists a Maurer-Cartan element $\alpha \in \mMC(\mmg)$ that satisfies $\alpha \in \mmF_{r+1} \mmg$.
	\end{theorem}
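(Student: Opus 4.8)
The plan is to build a Maurer--Cartan element by successive approximation along the filtration, using the vanishing of $E_{r+1}$ in total degree $2$ to clear the obstruction at each stage. I would construct a sequence $(\alpha_N)_{N\ge 2r+1}$ with $\alpha_N\in\mmF_{r+1}\mmg$, $\mMC(\alpha_N)\in\mmF_N\mmg$, and $\alpha_{N+1}-\alpha_N\in\mmF_{N-r}\mmg$, starting from $\alpha_{2r+1}:=0$; the base case is exactly the hypothesis, since $\mMC(0)=\mu_0\in\mmF_{2r+1}\mmg$. Granting the inductive step, the corrections $\beta_N:=\alpha_{N+1}-\alpha_N\in\mmF_{N-r}\mmg$ sum, by completeness, to a well-defined $\alpha:=\lim_N\alpha_N\in\mmF_{r+1}\mmg$, and the estimate $\mMC(\alpha)-\mMC(\alpha_{N+1})\in\mmF_{N-r+1}\mmg$ (valid because all brackets raise filtration and $\alpha,\alpha_{N+1}\in\mmF_{r+1}\mmg$) together with $\mMC(\alpha_{N+1})\in\mmF_{N+1}\mmg$ forces $\mMC(\alpha)\in\bigcap_p\mmF_p\mmg=0$, using completeness again. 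So the whole statement reduces to the inductive step.

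For the inductive step, write $c:=\mMC(\alpha_N)\in\mmF_N\mmg$. The key input is the Bianchi-type identity $\mu_1^{\alpha_N}\big(\mMC(\alpha_N)\big)=0$ for the twisted differential $\mu_1^{\alpha_N}=\mu_1+\sum_{n\ge 1}\tfrac1{n!}\mu_{n+1}(\alpha_N^{\odot n},-)$, which gives $\mu_1(c)=-\sum_{n\ge 1}\tfrac1{n!}\mu_{n+1}(\alpha_N^{\odot n},c)$. Since $\alpha_N\in\mmF_{r+1}\mmg$ and $c\in\mmF_N\mmg$, every term on the right lies in $\mmF_{(r+1)+N}\mmg$, hence $\mu_1(c)\in\mmF_{N+r+1}\mmg$. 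Thus $c$ is an $(r+1)$-cocycle for the filtered object $(\mmg,\mu_1)$, i.e.\ $c\in Z_{r+1}^{N}$, so it represents a class in $E_{r+1}^{N,\,2-N}$ (the obstruction carries the internal degree of the curvature $\mu_0$, which is total degree $2$ in the bigrading). By hypothesis $E_{r+1}^{N,2-N}=0$, so $c$ lies in the corresponding boundaries: $c=z_1+\mu_1(z_2)$ with $z_1\in\mmF_{N+1}\mmg$ and $z_2\in\mmF_{N-r}\mmg$. Setting $\beta_N:=-z_2\in\mmF_{N-r}\mmg$ yields $c+\mu_1(\beta_N)=z_1\in\mmF_{N+1}\mmg$.

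It remains to verify that $\alpha_{N+1}:=\alpha_N+\beta_N$ solves the Maurer--Cartan equation one filtration degree deeper. Expanding $\mMC(\alpha_N+\beta_N)$ by multilinearity and symmetry, the $\beta_N$-free contributions give $c$; the contributions linear in $\beta_N$ give $\mu_1(\beta_N)+\sum_{n\ge 1}\tfrac1{n!}\mu_{n+1}(\alpha_N^{\odot n},\beta_N)$, whose non-$\mu_1$ part lies in $\mmF_{(r+1)+(N-r)}\mmg=\mmF_{N+1}\mmg$; and the contributions at least quadratic in $\beta_N$ lie in $\mmF_{2(N-r)}\mmg\subseteq\mmF_{N+1}\mmg$ precisely because $N\ge 2r+1$. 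Hence $\mMC(\alpha_{N+1})\equiv c+\mu_1(\beta_N)\equiv 0\pmod{\mmF_{N+1}\mmg}$, and $\alpha_{N+1}\in\mmF_{r+1}\mmg$ since $N-r\ge r+1$; this closes the induction.

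The main obstacle is the inductive step, and more precisely two coupled bookkeeping facts: first, the obstruction $\mMC(\alpha_N)$ is annihilated by $\mu_1$ only up to filtration depth $r+1$ rather than exactly, so the relevant page is $E_{r+1}$ and not $E_1$; this is exactly what forces the correction $\beta_N$ to be taken in $\mmF_{N-r}\mmg$ instead of $\mmF_N\mmg$. Second, one must check that this deeper correction does not destroy the approximation already achieved, which pins down the quantitative threshold $N\ge 2r+1$ and thereby the hypothesis $\mu_0\in\mmF_{2r+1}\mmg$ needed to launch the induction. One should also confirm that the pages $E_0,\dots,E_{r+1}$ and their cycle/boundary filtrations behave as in the classical filtered-complex case despite $\mu_1^2=-\mu_2(\mu_0,-)\neq 0$; here $\mu_0\in\mmF_{2r+1}\mmg$ guarantees that $\mu_1^2$ raises filtration by at least $r+1$, which is just enough for all constructions invoked up to page $r+1$, in particular for the inclusion $\mu_1(Z_r^{N-r})\subseteq Z_{r+1}^{N}$ used to read off $\beta_N$.
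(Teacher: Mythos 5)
Your proof is correct, and at its core it is the same successive-approximation argument as the paper's: the obstruction at stage $N$ is exactly the curvature of the algebra twisted by the partial sum $\alpha_N$, it is killed using the explicit meaning of $E_{r+1}^{p,q}=0$ in total degree $2$ (your decomposition $c=z_1+\mu_1(z_2)$ with $z_1\in\mmF_{N+1}\mmg$, $z_2\in\mmF_{N-r}\mmg$ is literally the paper's reformulation of that vanishing in Equation \eqref{Spectral Alt}), the threshold $N\geq 2r+1$ enters at the same spot (the terms at least quadratic in the correction, cf. Equation \eqref{Twisted Curvature}), and convergence is by completeness. The one genuine organizational difference: the paper re-twists the algebra at every step, so it must prove separately that twisting by an element of $\mmF_{r+1}\mmg^0$ preserves the vanishing of the $E_{r+1}$-page (Lemma \ref{Twisting and Spectral}); you instead keep all the bookkeeping in the original algebra and use the Bianchi identity $\mu_1^{\alpha_N}(\mMC(\alpha_N))=0$ to convert twisted-closedness of the obstruction into the statement that it is an $(r+1)$-cocycle for the original $\mu_1$, so only the untwisted spectral sequence is ever invoked. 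This buys a slightly leaner argument in which Lemma \ref{Twisting and Spectral} is not needed, whereas the paper's route isolates the twisting statements (Lemmata \ref{Twisting does not destroy}, \ref{Twisting and Spectral}, \ref{Twisting towards the final destination}) as reusable lemmata; note also that your Bianchi identity implicitly uses that twisting by an arbitrary degree-zero element again yields a curved $L_\infty$-algebra (the content of Lemma \ref{Twisting does not destroy}, or alternatively a direct computation from the relations \eqref{LinftyEq}), so that input should be made explicit rather than taken for granted.
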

	Since twisting of a curved \LiA by a Maurer-Cartan element leads to a non-curved $L_\infty$-algebra, Theorem \ref{MainThm} equivalently shows the existence of a non-Abelian non-curved $L_\infty$-algebra structure on $\mmg$ which is compatible with the filtration.\\	
	By applying the aforementioned results to the study of intrinsic formality of $P_\infty$-algebras, we show the following sufficient condition for a $P_\infty$-algebra $A$ to be intrinsically formal
	\begin{theorem}
		\label{Intrinsic Formality}
		Let $A$ be a $P_\infty$-algebra for $P$ a possibly inhomogeneous Koszul operad generated in arities 1,2. If the twisted deformation complex\footnote{See Definition \ref{Deformation Complex} for the definition of the Deformation Complex.} ${\mathrm{Def}(H(A)\stackrel{\mathrm{Id}}{\to} H(A))}$ is acyclic in total degree 1 (i.e. $\mmF_p H^q(\mathrm{Def}(H(A)\stackrel{\mathrm{Id}}{\to} H(A)))=0$ for all $p,q$ with $p+q=1$), then $A$ is intrinsically formal as a $P_\infty$-algebra .
	\end{theorem}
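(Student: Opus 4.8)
The plan is to deduce the statement from a single application of Theorem~\ref{MainThm} to the curved \LiA provided by the twisted deformation complex. First I would invoke the homotopy transfer theorem to replace $A$ by the transferred $P_\infty$-algebra $(H(A),\mu^{\mathrm{tr}})$ on its cohomology, whose parts of weight $0$ and weight $1$ are, respectively, the zero differential and the induced $P$-algebra structure on $H(A)$, and whose parts of weight $\geq 2$ record the transferred higher operations. Since $(H(A),\mu^{\mathrm{tr}})$ is $\infty$-quasi-isomorphic to $A$, it suffices to produce an $\infty$-quasi-isomorphism $(H(A),\mu^{\mathrm{tr}})\to(H(A),\mu^{\mathrm{fm}})$ onto the \emph{formal model} $\mu^{\mathrm{fm}}$, i.e.\ the strict $P$-algebra structure on $H(A)$ --- the part of $\mu^{\mathrm{tr}}$ of weight $\leq 1$, extended by zero --- regarded as a $P_\infty$-algebra; both structures carry the complex $(H(A),0)$ and share the same part of weight $\leq 1$, in particular the same induced $P$-algebra structure. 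By the discussion in the introduction together with Definition~\ref{Deformation Complex}, the $\infty$-morphisms $(H(A),\mu^{\mathrm{tr}})\to(H(A),\mu^{\mathrm{fm}})$ with linear part $\mathrm{Id}_{H(A)}$ are exactly the Maurer--Cartan elements of filtration degree $\geq 2$ of the deformation complex twisted by the coalgebra morphism $F$ induced by $\mathrm{Id}_{H(A)}$; as an element of filtration degree $\geq 2$ does not change the linear part, and that of $F$ is the quasi-isomorphism $\mathrm{Id}_{H(A)}$, any such Maurer--Cartan element is automatically an $\infty$-quasi-isomorphism. Write $\mmg=\mathrm{Def}(H(A)\stackrel{\mathrm{Id}}{\to}H(A))$ for this curved \LiA (understood after the standard $\mmS$-degree modification).

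Next I would check the two hypotheses of Theorem~\ref{MainThm} for $r=1$. For the curvature, $\mu_0=\sum_{k\geq 1}\tfrac{1}{k!}\mu_k(F^{\odot k})$ measures the failure of $F$ to be an $\infty$-morphism. The filtration on $\mathrm{Def}$ recalled in the introduction is descending, bounded above, complete and compatible with the curved $L_\infty$-brackets; and since $P$ is generated in arities $1,2$ while $\mathrm{Id}_{H(A)}$ is a chain map intertwining the induced $P$-algebra structure common to $\mu^{\mathrm{tr}}$ and $\mu^{\mathrm{fm}}$, the element $F$ already solves the $\infty$-morphism equations in the two bottom filtration layers, so $\mu_0\in\mmF_3\mmg=\mmF_{2r+1}\mmg$. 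For the spectral sequence, the zero differential on $H(A)$ forces the internal differential $\mu_1$ of $\mmg$ to strictly raise the filtration, so $E_1=\mathrm{gr}\,\mmg$ with page-one differential the operadic cochain differential induced by the $P$-algebra structure; hence $E_2$ is the operadic (deformation) cohomology $H^{\bullet}_{P}(H(A),H(A))$, which depends only on the $P$-algebra $H(A)$ and is insensitive both to the twist by $F$ and to the choice of operations of weight $\geq 2$. The filtered cohomology $\mmF_\bullet H^\bullet$ of the twisted complex occurring in the hypothesis is this $E_2$-page, read off in the $\mmS L_\infty$ grading natural to $\mathrm{Def}$; under the single-degree shift to the plain curved-$L_\infty$ grading of Theorem~\ref{MainThm}, the assumption ``acyclic in total degree $1$'' becomes precisely ``$E_2^{p,q}=0$ for all $p,q$ with $p+q=2$''.

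With both hypotheses verified, Theorem~\ref{MainThm} supplies a Maurer--Cartan element $\alpha\in\mMC(\mmg)$ with $\alpha\in\mmF_{r+1}\mmg=\mmF_2\mmg$. Translating back, $\alpha$ corresponds through twisting to an $\infty$-morphism $(H(A),\mu^{\mathrm{tr}})\to(H(A),\mu^{\mathrm{fm}})$ whose linear part is that of $F$, namely $\mathrm{Id}_{H(A)}$; being a quasi-isomorphism of complexes, it is an $\infty$-quasi-isomorphism. Composing it with the homotopy-transfer $\infty$-quasi-isomorphism $A\simeq(H(A),\mu^{\mathrm{tr}})$ shows that $A$ is $\infty$-quasi-isomorphic to its cohomology equipped with the formal $P_\infty$-structure. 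Since the hypothesis only involves the $P$-algebra $H(A)$, the same argument applies to any $P_\infty$-algebra $B$ with $H(B)\cong H(A)$ as a $P$-algebra, so every such $B$ is $\infty$-quasi-isomorphic to the common formal model; in particular $A$ is intrinsically formal.

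The step I expect to be the main obstacle is the technical bookkeeping that makes $\mmg$ fit the hypotheses of Theorem~\ref{MainThm}: verifying that the filtration on the twisted deformation complex is descending, bounded above, complete and compatible with the curved brackets, and --- above all --- that the curvature lands in $\mmF_3\mmg$ and not merely in $\mmF_2\mmg$, which is exactly where the assumption that $P$ is generated in arities $1,2$ enters. In the \emph{inhomogeneous} (linear--quadratic) Koszul case, e.g.\ $P=BV$, the Koszul cooperad $P^{\antishriek}$ carries an extra internal differential coming from the linear part of the relations, and one must check that this term neither contributes to $\mu_0$ in the bottom two filtration layers nor disturbs the identification of $E_2$ with the deformation cohomology; carrying this out, together with pinning down the $\mmS$-degree shift so that ``acyclic in total degree $1$'' matches ``$E_2$ vanishing in total degree $2$'', is the real content. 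The homotopy-transfer input and the Maurer--Cartan/$\infty$-morphism dictionary are standard.
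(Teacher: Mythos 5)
Your proposal is correct and follows essentially the same route as the paper: reduce via the homotopy transfer theorem and the Maurer--Cartan/$\infty$-morphism dictionary to finding a Maurer--Cartan element of filtration degree $\geq 2$ in the twisted deformation complex $\mDef(H(B)_{\mHTT}\stackrel{\mathrm{id}}{\to}H(B))$, check that the curvature lies in $\mmF_3$ because the transferred structure agrees with the strict $P$-algebra structure in weight $\leq 1$, identify $E_2$ of its spectral sequence with the cohomology of the flat complex appearing in the hypothesis, and apply Theorem \ref{MainThm} with $r=1$, running the argument for every $B$ with $H(B)\cong H(A)$. The technical verifications you flag as the main obstacle (the filtration properties, the $\mmF_3$ bound on the curvature, and the identification of the first page with $\mDef(H(B)\stackrel{\mathrm{id}}{\to}H(B))$ as filtered dg vector spaces) are exactly the content of the paper's Lemmata \ref{Filtration on Deformation Complex}, \ref{Filtration of Curvature}, \ref{dg Vect Iso 1} and \ref{dg Vect Iso 2}.
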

	As the \emph{Batalin-Vilkovisky-operad} $BV$ is of such type, we get the following corollary.
	\begin{corollary}
		Let $A$ be a $BV_\infty$-algebra. If $A$ is subject to $\mmF_p H^q(\mathrm{Def}(H(A)\stackrel{\mathrm{Id}}{\to} H(A)))=0$ for all $p,q$ with $p+q=1$, then $A$ is intrinsically formal as a $BV_\infty$-algebra
	\end{corollary}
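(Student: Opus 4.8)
The plan is to deduce the statement directly from Theorem \ref{Intrinsic Formality}; the whole task reduces to observing that the Batalin--Vilkovisky operad $BV$ is admissible as the operad $P$ there, i.e.\ that it is a possibly inhomogeneous (linear-quadratic) Koszul operad whose generating set lies in arities $1$ and $2$. First I would recall the presentation of $BV$, following \cite{TonksBV} (Section~1.4 and Appendices~A,~B): it is generated by the arity-$2$, degree-$0$ commutative and associative product $\cdot$ together with the arity-$1$, degree-$1$ operator $\Delta$, subject to the quadratic relations of associativity and commutativity of $\cdot$, the quadratic relation $\Delta^{2}=0$, and the linear-quadratic relation expressing that $\Delta$ is a differential operator of order $\leq 2$ with respect to $\cdot$. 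Hence the generators of $BV$ sit in arities $1,2$, and the fact that this inhomogeneous presentation is Koszul is exactly what is established in \cite{TonksBV}; in particular $BV_\infty=\Omega(BV^{\antishriek})$ there is precisely the object for which the terms ``$BV_\infty$-algebra'' and ``intrinsic formality'' are defined in Theorem \ref{Intrinsic Formality}.

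Granting this, the hypothesis of the corollary, namely $\mmF_p H^q(\mathrm{Def}(H(A)\stackrel{\mathrm{Id}}{\to}H(A)))=0$ for all $p,q$ with $p+q=1$, is verbatim the hypothesis of Theorem \ref{Intrinsic Formality} in the case $P=BV$, so invoking that theorem yields that $A$ is intrinsically formal as a $BV_\infty$-algebra, which is the assertion.

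The only point that is not pure bookkeeping — and the reason for stating this as a separate corollary rather than as a remark after Theorem \ref{Intrinsic Formality} — is the Koszulness of the inhomogeneous operad $BV$, which we cite from \cite{TonksBV} rather than reprove. I expect no further difficulty: once the presentation of $BV$ is in hand and its linear-quadratic Koszulness is invoked, the corollary follows immediately from Theorem \ref{Intrinsic Formality}.
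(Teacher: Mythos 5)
Your proposal is correct and is essentially the paper's own argument: the paper derives the corollary from Theorem \ref{Intrinsic Formality} with the one-line observation that $BV$ is a possibly inhomogeneous Koszul operad generated in arities $1,2$ (citing \cite{TonksBV}), exactly as you do. Your additional recollection of the explicit linear-quadratic presentation of $BV$ is fine but not needed beyond the citation.
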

	In Section \ref{preliminaries} we introduce some basic notation and recall the definition of spectral sequences. Section \ref{power} is dedicated to the description of (curved) \LiA as cohomological vector fields. In Section \ref{Main} we prove Theorem \ref{MainThm}. Finally, Section \ref{Applications} is used for introducing intrinsic formality and the deformation complex as well as proving Theorem \ref{Intrinsic Formality}. In Appendix \ref{Appendix} we provide proofs to the equivalence of the various descriptions of (curved) $L_\infty$-algebras.
	\section{Preliminaries}
	\label{preliminaries}
	\subsection{Curved $L_\infty$-algebra}
	\begin{definition}[Curved $L_\infty$-algebra]
		\label{Curved Linfty}
		A curved $L_\infty$-algebra structure on a graded vector space $\mmg$ is a collection of linear maps
		\begin{align*}
			\mu_n : S^n (\mmg [-1]) \to \mmg[-1]
		\end{align*}
		of degree 1 that satisfy the so-called curved $L_\infty$-algebra relations
		\begin{align}
			\label{LinftyEq}
			\sum_{\substack{i,j\in \mathbb{N}_0\\i+j=n}} \sum_{\sigma\in \mathrm{UnSh}(i,j)} \hspace{-0,4cm} \epsilon(\sigma,x_1, \ldots, x_n) \mu_{j+1} ( \mu_i (x_{\sigma(1)} \odot\ldots \odot x_{\sigma(i)}) \odot x_{\sigma(i+1)} \odot \ldots \odot x_{\sigma (n)})=0,
		\end{align}
		with $\epsilon$ being the shifted Koszul sign (that is the usual Koszul sign w.r.t. elements viewed in the shifted graded vector space) and $\mathrm{UnSh}(i,j)$ denoting the $(i,j)$-unshuffles.
	\end{definition}
	It is immediate from the definition that a zero curvature (i.e. $\mu_0=0$) curved \LiA coincides with a usual $L_\infty$-algebra (cf. \cite{ZwiebachL}, Equation (3.11)). To simplify the notation, we will use commata to separate the arguments of the $n$-brackets. \\
	For the scope of this paper we mean by \LiA only non-curved (a.k.a. flat) $L_\infty$-algebras and always write curved \LiA when we also allow for non-zero curvature. The same of course also holds for the shifted counterparts (introduced below). We generally use the notation $\mmg$ for both the \LiA structure as well as the underlying vector space.\\
	To avoid ubiquitous suspensions we work with shifted (curved) $L_\infty$-algebras instead.
	\begin{definition}[Shifted (curved) $L_\infty$-algebra]
		\label{SLinfty Algebra}
		A (curved) shifted $L_\infty$-algebra structure\index{(curved) $\mmS L_\infty$-algebra} on a graded vector space $\mmg$ is defined as a (curved) $L_\infty$-algebra structure on the suspension $\mmg[-1]$ and we will simply write (curved) $\mmS L_\infty$-algebra for it
	\end{definition}
	Switching to shifted curved $L_\infty$-algebra does not make any significant difference but helps in keeping the notation as simple as possible. A more profound introduction may be found in \cite{DolgushevEnhancement}.\\
	In the theorems, we work with curved $\mmS L_\infty$-algebras endowed with descending, bounded above and complete filtrations compatible with the $\mmS L_\infty$-algebra structures. This means for a $\mmS L_\infty$-algebra $\mmg$ being of such type it must satisfy
	\begin{align}
		\label{W14B3}
		\mmg = \mmF_1 \mmg \supset \mmF_2 \mmg \supset \mmF_3 \mmg \supset \ldots,\\
		\label{W14B4}
		\mmg = \lim\limits_{\leftarrow} \mmg /\mmF_p \mmg
	\end{align}
	and the degree of filtration must add under $\mmS L_\infty$-algebra brackets.\\
	\begin{definition}[Maurer-Cartan elements]
		\label{Maurer-Cartan elements}
		In the category of curved $\mmS L_\infty$-algebras, the Maurer-Cartan elements\index{Maurer-Cartan element} are defined by
		\begin{align}
			\label{MCE}
			\mMC(\mmg) \meqd \{ \alpha \in \mmg^0 : M(\alpha) =0 \},
		\end{align}
		where
		\begin{align}
			\label{MCEq}
			M (x) \meqd \sum_{k=0}^{\infty} \frac{1}{k!} \mu_k (x, \ldots , x ).
		\end{align}
	\end{definition}
	\begin{definition}[Twisted curved $\mmS L_\infty$-algebra]
		\label{Twisted curved SLinfty algebra}
		For a curved \sLiA ${\mmg=(\mmg, \mu_{n \geq 0})}$ and a degree 0 element $\beta \in \mmg^0$, the twisted curved \sLiA ${\mmg^\beta = (\mmg,\mu_{n \geq 0}^\beta)}$ is defined as being the same underlying vector space $\mmg$ but with the curved \sLiA brackets altered to
		\begin{align}
			\label{Twisted}
			\mu_n^\beta (v_1, v_2, \ldots, v_n) \meqd \sum_{k=0}^{\infty} \frac{1}{k!} \mu_{k+n} (\ubr{\beta, \ldots, \beta}_{\text{k-times}}, v_1, v_2 , \ldots, v_n  ).
		\end{align}
	\end{definition}
	In contrast to the non-curved setting, where we were only allowed to twist by Maurer-Cartan elements, in the curved case we may twist by any degree 0 element and obtain a curved \sLiA out of it (see Lemma \ref{Twisting does not destroy} for further details). \\
	Moreover, twisting a curved $\mmS L_\infty$-algebra by a Maurer-Cartan element results in a non-curved $\mmS L_\infty$-algebra. From this point of view it makes sense why in the subcategory of $\mmS L_\infty$-algebras we only allow for twisting by Maurer-Cartan elements.
	\subsection{Spectral sequences}
	\label{Spectral Sequence}
	\begin{definition}[Spectral Sequence up to page k]
		\label{Def Spectral Sequence}
		Let $R$ be a ring. By abuse of notation we define a spectral sequence up to page $k$ as a collection of differential bi-graded R-modules $\{E_{r}^{p,q},d_r\}_{k\geq r \geq 0}$, for which $E_{r+1}^{p,q} \cong H^{p,q} (E_r^{\bullet,\bullet},d_r)$ for all $r \in \{0, \ldots, k-1\}$.\\
		In the case of $k=\infty$ this definition coincides with the classical definition of a spectral sequence (cf. \cite{Weibel}, Section 5).
	\end{definition}
	Throughout this paper we use co-homological conventions, hence the differential is assumed to raise the degree by one. By abuse of notation we will still refer to a spectral sequence even if only the spectral pages up to some $r \in \mathbb{N}_0$ are defined. That being said, having an `up to page r spectral sequence` does a priori no longer allow to use spectral sequence arguments and one should consider them as filtered differential graded vector spaces with the additional attribute of the next page being isomorphic to the co-homology of the previous one.\\
	There is a well-known construction for assigning a spectral sequence to a filtered complex\index{Spectral sequence!of filtered Complex} $(V,d)$ (cf. \cite{Weibel}, Section 5.4), by using the fact that the differential is compatible with the filtration and hence satisfies $d (\mmF_{p} V ) \subset \mmF_{p}V$.\\
	In this construction one sets the zeroth page (also called associated graded) to
	\begin{align}
		\label{zeropage}
		E_0^{p,q} \meqd \mmF_{p} V^{p+q} / \mmF_{p+1} V^{p+q}
	\end{align}
	with differential $d_0^{p,q}: E_0^{p,q} \to E_0^{p,q+1}$ being induced by the original differential $d$ and more general defines rth page\index{Spectral sequence!r th page} as
	\begin{align}
		\label{rthpage}
		E_r^{p,q} \meqd \frac{\{z \in \mmF_p V^{p+q} : d (z) \in \mmF_{p+r} V^{p+q+1}\}}{ \{ x \in \mmF_{p+1} V^{p+q} : d(x) \in \mmF_{p+r} V^{p+q+1} \} \oplus \{d(y) \cap \mmF_p V^{q+p} : y \in \mmF_{p-r+1} V^{p+q-1} \}}.
	\end{align}
		There, the differential $d_r^{p,q}: E_r^{p,q} \to E_r^{p+r,q-r+1}$ (note the shifts in the codomain) is given by the restriction of the original differential composed with the quotient map.\\
		However, when we try to apply this construction in the setting of a curved $\mmS L_\infty$-algebra $\mmg$ to $(\mmg, \mu_1)$ we face the problem that the one bracket $\mu_1$ does not square to zero but instead yields
		\begin{align}
			\label{onebracketsquared}
			\mu_1^2 (x) = -\mu_2(\mu_0,x)
		\end{align}
		and hence does not form a chain complex	(or to put it in other words: $-\mu_2(\mu_0,\cdot)$ measures the inability of $\mu_1$ to square to zero).\\
		Nevertheless, the construction intended for filtered complexes with $\mu_1$ in the role of $d$ remains applicable up to a certain (depending on the degree of filtration of the curvature) page, as we now demonstrate.\\
		Let us assume that $\mmg$ is a curved $\mmS L_\infty$-algebra, equipped with a  descending, bounded above and complete filtration compatible with the curved $\mmS L_\infty$-algebra structures. Moreover, let the  curvature satisfy $\mu_0 \in \mmF_{2r+1} \mmg^1$ for some $r$.\\
		We start by noticing that as graded vector the sth page of the spec.seq.\footnote{For now the object formally defined in Equation \eqref{rthpage}.} $E_{s}$ for $s\leq r+1$ is a valid expression. The only non-trivial thing to show is that $\{ \mu_1 (y) \cap \mmF_p \mmg^{p+q} : y \in \mmF_{p-s+1} \mmg^{p+q-1} \} \subset \{z \in \mmF_{p} \mmg^{p+q} : \mu_1 (z) \in \mmF_{p+s} \mmg^{p+q+1} \}$.\\
		So let $x \in \{ \mu_1 (y) \cap \mmF_p \mmg^{p+q} : y \in \mmF_{p-s+1} \mmg^{p+q-1} \}$. By virtue of $\mu_0 \in \mmF_{2r+1} \mmg^1$ and because of $s \leq r+1$ we have 
		\begin{align}
			\label{welldef}
			\mu_1 (x)= \mu_1^2(y) \meqc{\eqref{W14B4}}- \mu_2(\underbrace{\mu_0}_{\in \mmF_{2r+1} \mmg^1}, \underbrace{x}_{\in \mmF_{p-s+1} \mmg^{p+q-1}}) \in \mmF_{p-s+1+2r+1} \mmg^{p+q+1}  \substack{\subset\\s \leq r+1} \mmF_{p+s} \mmg^{p+q+1},
		\end{align}
		which yields the inclusion.\\
		Let us point out, that it is absolutely crucial for the curvature $\mu_0$ to be subject to the condition $\mu_0 \in \mmF_{2r+1} \mmg^1$. It can directly be checked that for $s=r+1$ Equation \eqref{welldef} may only hold for $\mu_0 \in \mmF_{2r+1} \mmg^1$. As we will later see, this requirement is also needed in the proof of Theorem \ref{MainThm} itself.\\
		Next, we check that that the construction of $E_s^{p,q}$ as described in Equation \eqref{rthpage} and its respective differential as the map induced by the 1-bracket $\mu_1$ forms a spectral sequence on the pages $E_0, \ldots E_{r+1}$, indeed\footnote{For $E_{r+1}$ we may set the differential to be the zero map since the presented construction in general is not applicable to define $d_{r+1}$.}.\\ 
		For that we need to show that the `differential' $d_s$ (induced by the 1-bracket $\mu_1$) is well-defined and satisfies $d_s^2=0$  for $s < r+1$.\\
		It being well-defined follows directly from the fact that 
		\begin{align*}
			\mu_1 (\{ x \in \mmF_{p+1} \mmg^{p+q} : \mu_1(x) \in \mmF_{p+r} \mmg^{p+q+1} \}  ) \subset  \{\mu_1(y) \cap \mmF_{p+r} \mmg^{q+p+1} : y \in \mmF_{p+1} \mmg^{p+q} \}
		\end{align*}
		and that for a $z \in  \{\mu_1(y) \cap \mmF_p \mmg^{q+p} : y \in \mmF_{p-s+1} \mmg^{p+q-1} \}$ its image under $\mu_1$ satisfies  the following two equations
		\begin{align*}
			\mu_1(z) = \mu_1^2 (y)= -\mu_2 (\underbrace{\mu_0}_{\in \mmF_{2r+1} \mmg^1}, \underbrace{y}_{\in \mmF_{p-s+1} \mmg^{p+q-1}}) \in \mmF_{r+1+p-s+1} \mmg^{p+q+1} \substack{\subset \\s < r+1} \mmF_{p+r+1} \mmg^{p+q+1}\\
		\end{align*}
		and
		\begin{align*}
			\mu_1^2(z)= -\mu_2 (\underbrace{\mu_0}_{\in \mmF_{2r+1} \mmg^1}, \underbrace{z}_{\in \mmF_{p} \mmg^{p+q}}) \in \mmF_{p+2r+1} \mmg^{p+q+2} \substack{\subset \\ s< r+1} \mmF_{p+s+s} \mmg^{p+q+2}.
		\end{align*}
		This in turn proves
		\begin{align*}
			\mu_1 ( \{\mu_1(y) \cap \mmF_p \mmg^{q+p} : y \in \mmF_{p-r+1} \mmg^{p+q-1} \}) \subset \{ x \in \mmF_{p+s+1} \mmg^{p+q+1} : \mu_1(x) \in \mmF_{p+s+s} \mmg^{p+q+2} \}.
		\end{align*}
		It remains to show $d_s^2=0$, that is to say (written out to make the `filtration degree shift' explicit)
		\begin{align*}
			d_s^{p+s,q-s+1} ~ d_s^{p,q} : E_s^{p,q} \to E_s^{p+2s, q-2s+2}
		\end{align*}
		vanishes.\\
		For $[x]\in E_s^{p,q}$ we have $x\in \mmF_p \mmg^{p+q}$ and $\mu_1 (x) \in \mmF_{p+s}\mmg^{p+q+1}$. Applying the differential two, respectively three times yields that
		\begin{align*}
			\mu_1^2 (x) =- \mu_2 (\mu_0,x) \in \mmF_{2r+1 + p} \mmg^{p+q+2} \substack{\subset \\ s < r+1} \mmF_{p+2s+1} \mmg^{p+q+2}\\
			\mu_1^3 (x)= \mu_1^2 (\underbrace{\mu_1(x)}_{\in \mmF_{p+s}\mmg^{p+q+1}}) = -\mu_2 ( \mu_0, \mu_1 (x)) \in \mmF_{2r+1 + p+s} \mmg^{p+q+3}  \substack{\subset \\ s < r+1} \mmF_{p+3s+1} \mmg^{p+q+3},
		\end{align*}
		from which it clearly follows that $\mu_1^2 (x)$ vanishes under the projection to the quotient space $E_s^{p+2s, q-2s+2}$.\\
		The proof of $E_{n+1}^{p,q} \cong H^{p,q}(E_n^{\bullet, \bullet},d_r)$ for $n\leq r$ follows the usual path of the construction of a spectral sequence of filtered complexes (cf. \cite{Weibel}, Sect.5.4).\\
		\subsection{Derivations and Differentials on operadic algebras}
		When it comes to (co-)derivations and (co-)differentials of (co-)operadic (co-)algebras we mainly follow the definitions of \cite{GetzlerJones}, Sect.2.2, but with co-homological conventions rather than homological. Moreover, we use the notation of \cite{FreBuch}, Vol.\rnum{1}, Sect.0.13, that is in case of $\mathcal{C}$ being an enriched category over $\mathcal{D}$ we distinguish between morphism sets $\mMor_{\mathcal{C}}$ and $\mHom_{\mathcal{C}}$ which denotes hom-objects with values in $\mathcal{D}$.\\
		Notice, that for closed symmetric monoidal categories $M$ (as e.g. $M=grVect$) there is the internal hom-adjunction
		\begin{align*}
			\mMor_M (A \otimes B, C) \simeq \mMor (A, \mHom_M (B,C))
		\end{align*}
		for all $A,B,C \in \mathrm{Obj}(M)$.\\
		Therefore, for $A,B$ being two graded vector spaces, $f\in \mMor_{grVect} (A,B)$ is a linear map that satisfies $f(A_n) \subset B_n$ for all $n\in \mathbb{Z}$. On the other hand (cf. \cite{FreBuch}, Vol.\rnum{1}, Sect.4.4), $g\in \mHom_{grVect} (A,B)$ is only required to be a linear map and hence in general is allowed to change the degree. By definition, $\mHom_{grVect}(A,B)$ carries a graded vector space structure given by $\mHom_{grVect} (A,B)^{(k)} = \{f: A \to B~ \text{linear} : f(A_n)\subset B_{n+k} \forall n\in \mathbb{Z}\}$.\\
		Let $U: dgVect \to grVect$ be the forgetful functor and let $P$ be an operad in $dgVect$ with differential $d_p$.\\
		In the following, we will introduce the three distinctive definitions of a differential on a $U(P)$-algebra, $P$-derivation on a $P$-algebra and differential-capable $P$-derivation.
		\begin{definition}[Differential on $U(P)$-algebra]
			\label{Differential on U(P)}
			Let $A$ be a $U(P)$-algebra (hence, $A$ as well the structure maps $\gamma: \mathbb{F}_{U(P) } (A) \to A$ lie in $grVect$).\\
			We say a graded vector space homomorphism $\xi: A \to A$ of degree 1 is a differential of $A$ if it squares to zero and makes the following diagram commutative:
			\begin{equation}
				\label{Differential on A}
				\begin{gathered}
				\tikzstyle{mynode}= [circle,draw=black!50,]
				\begin{tikzpicture}	
					\node [black] (v1) at (-3,2) {$P (A) $};
					\node [black] (v2) at (1,2) {$P (A) $};
					\node [black] (v3) at (-3,0) {$A$};
					\node [black] (v4) at (1,0) {$A$};
					\node[black,scale=2] (commutative) at (-1,1) {$\circlearrowright$};
					\draw[->] (v1) edge node[midway, above] {$d_P \circ Id_A + Id_P\circ^\prime \xi$} (v2)  ;
					\draw [->] (v1) edge node [midway,right] {$\gamma$} (v3);
					\draw [->] (v3) edge node[midway, above] {$\xi$} (v4);
					\draw [->] (v2) edge node[midway, right] {$\gamma$} (v4);
				\end{tikzpicture},
				\end{gathered}
			\end{equation}
			where $f\circ g$ acts on $(\mu; a_1, \ldots, a_k)$ by $f \otimes_{\mathbb{S}_n} (g \otimes \ldots \otimes g)$ and $f \circ^\prime g$ by $\sum_{j=1}^k f \otimes_{\mathbb{S}_n} (Id_A \otimes \ldots \otimes Id_A \otimes \ubr{g}_{\text{jth position}} \otimes Id_A \ldots \otimes Id_A)$. \\
			In particular, this requirement is equivalent to $(A,\xi)$ forming a $P$-algebra.
		\end{definition} 
		Let us point out, that for a differential graded vector space $(C,d_C)$, the map $d_{\mathbb{F}_P (C)} := d_P \circ Id_C + Id_P \otimes^\prime d_C$ forms a differential on $\mathbb{F}_{U(P)} (C)$. The so constructed $P$-algebra is the free $P$-algebra $\mathbb{F}_P (C)$.
		\begin{definition}[Derivation of a $P$-algebra]
			Let $A$ be a $P$-algebra or a $U(P)$ algebra.\\
			We call a linear map $\tau: A \to A$ of (degree k) a derivation of $A$ (of degree $k$) or equivalently $P$-derivation (of degree $k$) when it makes the following diagram of graded vector space homomorphisms commutative:
			\begin{equation}
				\label{Derivation on A}
				\begin{gathered}
				\tikzstyle{mynode}= [circle,draw=black!50,]
				\begin{tikzpicture}	
					\node [black] (v1) at (-3,2) {$P (A) $};
					\node [black] (v2) at (1,2) {$P (A) $};
					\node [black] (v3) at (-3,0) {$A$};
					\node [black] (v4) at (1,0) {$A$};
					\node[black,scale=2] (commutative) at (-1,1) {$\circlearrowright$};
					\draw[->] (v1) edge node[midway, above] {$Id_P\circ^\prime \tau$} (v2)  ;
					\draw [->] (v1) edge node [midway,right] {$\gamma$} (v3);
					\draw [->] (v3) edge node[midway, above] {$\tau$} (v4);
					\draw [->] (v2) edge node[midway, right] {$\gamma$} (v4);
				\end{tikzpicture}.
			\end{gathered}
			\end{equation}
		\end{definition}
		Let us emphasise that in contrast to a differential on the $U(P)$-algebra $A$, in the definition of derivation on $A$ there is neither a requirement to square to zero nor to have degree +1. Moreover, the notion of derivation makes sense both in the category of graded vector spaces as well as differential graded vector spaces (also, in the latter case there is no requirement for the derivation to commute with the vector space differential $d_A$).\\
		It is immediate that if $P$ is an operad with zero-differential, the two definitions of differential on a $U(P)$-algebra and derivation on a $P$-algebra do coincide.
		\begin{definition}[Differential-capable $P$-Derivation]
			\label{P-Differential}
			Let $A$ be a $P$-algebra with differential $d_A$. Then we say a $P$-derivation $\beta$ on $A$ is differential-capable relative to $d_A$, if it is of degree $+1$ and satisfies $(d_A+ \beta)^2=0$. 
		\end{definition}
		Because of the requirement $(d_A + \beta)^2 =0$, it is immediate that the sum of the differential on $A$ and a differential-capable $P$-derivation $\beta$ forms a differential on $A$ again.\\
		Later in this paper, we will often work with quasi-free\footnote{Some authors (see \cite{GetzlerJones}) also call this almost-free.} algebras. Recall, that a $P$-algebra $B$ is quasi-free, if $B=\mathbb{F}_{U(P)} (V)$ as underlying graded vector space for some  dg vector space $(V,d_V)$ but we allow $B$ to have a differential other than $d_{\mathbb{F}_{P}(V)}$. More precisely, we allow $B$ to have differential $d_B=d_{\mathbb{F}_{P}(V)} + \nu$ for $\nu$ being a differential-capable $P$-derivation on $\mathbb{F}_{U(P)}(V)$. \\
		However, for several reasons (just think about e.g. the definition of an $\infty$-quasi-isomorphism) we do not want to have the differential (seen as a dg vector space differential) on the $V$-part of $B$ being changed. To this end, we impose on $P$-derivations $\nu$ of $A$ (and consequently on differential-capable $P$-derivations) the additional condition that the composition
		\begin{align}
			\label{W114A6}
			V \stackrel{\cong}{\to} I(V) \stackrel{\eta \circ Id_V}{\to} P(V) \stackrel{\nu}{\to} P(V) \stackrel{\epsilon}{\to} I(V) \stackrel{\cong}{\to} V
		\end{align}
		vanishes, where $\eta$ and $\epsilon$ denote the unit and the augmentation of the operad $P$.\\
		While some authors (\cite{GetzlerJones}) use $\mathrm{Der}_\star$ and $\mathrm{Diff}_\star$ to distinguish $P$-derivations and differential-capable $P$-derivations subject to Equation \eqref{W114A6} from those without, we dispense the use of an additional $\star$, since we always use the ones where the additional requirement is invoked.\\
		Of course, all of the aforementioned definitions \emph{mutatis mutandis} also hold for their dual counterparts.\\
		We further elaborate on this topic in Section \ref{Revisited}, when we introduce the notion of derivation w.r.t. a $U(P)$-algebra morphism. Moreover, by making use of extensions of scalars, we point out how freeness can be used in describing derivations of free $U(P)$-algebras.		
		\section{$\mmS L_\infty$-equation as a power Series }
		\label{power}
		In this section we introduce an equivalent description of (curved) $\mmS L_\infty$-algebras in terms of power series since we will later use this language in the proof of Theorem \ref{MainThm}. One advantage of this alternative definition is that one can circumvent the ubiquitous signs in the curved $(\mmS) L_\infty$-algebra Equation \eqref{LinftyEq}.\\
		Let $\mmg= (\mmg, \{\mu_n\}_{n \geq 0})$ be a curved \sLiAn.
		Following (\cite{MappingSpaces}, Section 4 and \cite{RatHomMap}, Section 1) we use the construction of the power series
		\begin{align*}
			M(x) = \sum_{k=0}^{\infty} \frac{1}{k!} \mu_k (x,\ldots,x)
		\end{align*}	
		as starting point.\\
		So far nothing new, we already knew that e.g. solutions of the equation $M(x)=0$ are some special elements, called the Maurer-Cartan elements.\\
		Now comes the trick: We can also do the formal construction of $M(x)$ for some general n-ary degree 1 operations $\{\mu_n\}_{n\geq 0}$ which do not satisfy the $\mmS L_\infty$-algebra equations. As it turns out, the question of these operations $\mu_n$ forming $\mmS L_\infty$ brackets directly translates to whether the power-series $M(x)$ itself is a solution to a way simpler looking equation. Let us make the statement more concrete.\\
		Let $\mmg = (\mmg, \{\mu_n\}_{n \geq 0})$ be a graded vector space equipped with n-linear graded vector space homomorphism $\mu_n$ of degree 1 for all $n \geq 0$, which we do refer to as n-brackets. Let $R$ be a nilpotent graded ring. By extension of scalars we may extend the n-brackets $\{\mu_n\}_{n \geq 0}$ to the completed tensor product $\mmg \hat{\otimes} R$ and apply the construction of Equation \eqref{MCEq} to it, i.e. we have
		\begin{align}
			\label{MR}
			M^R (x \hat{\otimes} r) = \sum_{n=0}^{\infty} \frac{1}{n!} \pm \mu_n (x, \ldots, x) \hat{\otimes} r \cdot \ldots \cdot r,
		\end{align}
		where the sign is due to the Koszul sign convention.\\
		It turns out (see Appendix \ref{Appendix} for more details) that for the n-brackets $\{\mu_n\}_{n \geq 0}$ to satisfy the curved $\mmS L_\infty$-algebra equation it is tantamount to
		\begin{align}
			\label{powereq}
			D M^R (x) [M^R (x)]=0 \qquad \forall x \in (\mmg\hat{\otimes} R)^0,
		\end{align}
		for all graded nilpotent Rings $R$ and for all degree 0 elements in $\mmg\hat{\otimes} R$ with $D$ denoting the differential of the multilinear map and $[\ldots]$ being the point of evaluation.\\
		This alternative definition is exactly the description of ($\mmS$)$L_\infty$-algebras as co-homological vector fields which is well established in physics (cf. \cite{ZwiebachL}, Sections 2.1 and 2.2).\\
		If the $\mmS L_\infty$-algebra can be fully described by $\mmg = (\mmg, M^R)$, one may ask how to obtain an explicit expression for $\mu_n(x_1, \ldots, x_n)$ for some homogeneous elements $x_1, \ldots, x_n \in \mmg$ out of this (in Equation \eqref{MR} we only allowed for all the elements in the arguments being the same). The solution to this problems is obtained by graded polarisation and works as follows: Consider $\epsilon_i$ to be a formal variables of degree $-\mathrm{deg}(x_i)$ and let $R$ be the graded Ring $R\meqd \mathbb{K}[\epsilon_1, \ldots, \epsilon_n]/(\epsilon_1^2, \ldots, \epsilon_n^2)$. Then, $\mu_n (x_1, \ldots, x_n)$ is given by the $\epsilon_1, \ldots, \epsilon_n$ coefficient of the $\epsilon_1 \cdots \epsilon_n$ monomial of $M^R (x_1 \otimes \epsilon_1, \ldots, x_n \otimes \epsilon_n)$.\\
		There is yet another equivalent form (see Appendix \ref{Appendix} for more details) of the $\mmS L_\infty$-algebra equation
		\begin{align}
			\label{powereqalt}
			M^{R \otimes R^\prime} (x\hat{\otimes} 1+M^R(x)\hat{\otimes}\epsilon ) = M^R(x)\hat{\otimes}1 \qquad \forall x \in (\mmg \hat{\otimes} R)^0,
		\end{align}
		which we obtain by choosing $\epsilon$ to be a formal variable of degree $-1$ and $R^\prime$ to be the graded algebra $R^\prime= \mathbb{K} [\epsilon]/\epsilon^2$. This last version turns out to be useful when working with twisted curved $\mmS L_\infty$-algebra, as it dramatically simplifies the proof for the twisted curved $\mmS L_\infty$-algebra to be a curved $\mmS L_\infty$-algebra as well.

		\section{Main Theorem}
		\label{Main}
		In this section we focus  to the proof of Theorem \ref{MainThm}.\\
		Let us recall that we work in the setting of curved $\mmS L_\infty$-algebras endowed with a descending, bounded above and complete filtration that is compatible with the curved $\mmS L_\infty$-algebra structure. In particular, we always demand the filtration to start with $\mmg = \mmF_1 \mmg$ (also see Equations \eqref{W14B3} and \eqref{W14B4}), which together with completeness ensures convergence of the infinite sums as e.g. appearing in the Maurer-Cartan equations.
		By switching between the three equivalent descriptions of curved $\mmS L_\infty$-algebra the main theorem can be proved almost in absence of any bigger calculation.\\	
		The following lemma shows, that in the curved $\mmS L_\infty$-algebra setting we are not only allowed to twist by Maurer-Cartan elements but rather by any degree 0 element and still obtain a $\mmS L_\infty$-algebra endowed with a `good' filtration.
		\begin{lemma}
			\label{Twisting does not destroy}
			Let $\mmg=(\mmg, \{\mu_n\}_{n \geq 0})$ be a curved $\mmS L_\infty$-algebra endowed with a descending bounded above and complete filtration $\mmg = \mmF_1 \mmg \supset \mmF_2 \mmg \supset \ldots$ compatible with the $\mmS L_\infty$-algebra structure.\\
			Let $\beta \in \mmg^0$ be an arbitrary degree zero element.\\
			Then $\mmg^\beta = (\mmg,\{\mu_n^{\beta} \}_{n \geq 0})$ with $\{\mu_n^{\beta} \}_{n \geq 0}$ as defined in Equation \eqref{Twisted}, also describes a curved $\mmS L_\infty$-algebra. Moreover, the filtration maintains its qualities of being descending, bounded above, complete and compatible with the curved $\mmS L_\infty$-algebra structure.
		\end{lemma}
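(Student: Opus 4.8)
The plan is to prove Lemma \ref{Twisting does not destroy} by leveraging the power series description of curved $\mmS L_\infty$-algebras developed in Section \ref{power}, specifically the equivalent characterization \eqref{powereqalt} together with the substitution-invariance of the twisting operation. The key observation is that the twisted brackets $\{\mu_n^\beta\}_{n\geq 0}$ assemble into the power series $M^R_\beta(x) = M^R(x + \beta)$; this follows directly from expanding $M^R(x+\beta) = \sum_m \frac{1}{m!}\mu_m((x+\beta)^{\odot m})$, regrouping the terms according to how many copies of $\beta$ versus $x$ appear, and matching with Equation \eqref{Twisted}. One caveat: strictly speaking $M^R$ is defined for $x$ in $(\mmg\hat{\otimes}R)^0$ with $R$ nilpotent, so $\beta$ has to be fed in via extension of scalars as well; since $\beta \in \mmg^0$ and the filtration is complete, the relevant sums still converge and the formal identity $M^R_\beta = M^R(-+\beta)$ holds at the level of formal power series.

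Given this identity, checking that $\mmg^\beta$ is a curved $\mmS L_\infty$-algebra amounts to verifying \eqref{powereqalt} for $M^R_\beta$. First I would compute, for $x \in (\mmg\hat{\otimes}R)^0$,
\begin{align*}
M^{R\otimes R'}_\beta(x\hat{\otimes}1 + M^R_\beta(x)\hat{\otimes}\epsilon) = M^{R\otimes R'}\big((x+\beta)\hat{\otimes}1 + M^R(x+\beta)\hat{\otimes}\epsilon\big),
\end{align*}
where I have used the defining identity twice and absorbed $\beta\hat{\otimes}1$ into the first slot. Now apply \eqref{powereqalt} for the original (genuine) curved $\mmS L_\infty$-algebra $\mmg$, with $x+\beta$ playing the role of the degree-zero input: this yields $M^R(x+\beta)\hat{\otimes}1 = M^R_\beta(x)\hat{\otimes}1$, which is exactly \eqref{powereqalt} for $M^R_\beta$. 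Hence $\mmg^\beta$ is a curved $\mmS L_\infty$-algebra. (One should also note the degree $+1$ condition on the $\mu_n^\beta$ is immediate since each summand $\mu_{k+n}(\beta,\dots,\beta,v_1,\dots,v_n)$ has degree $+1$ because $\beta$ has degree $0$.)

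It remains to verify that the filtration retains its four properties on $\mmg^\beta$. Being descending, bounded above, and complete are statements purely about the filtered graded vector space $\mmg = \mmF_1\mmg \supset \mmF_2\mmg \supset \cdots$, which is literally unchanged by twisting, so nothing is to prove there. The only genuine point is compatibility of the filtration with the new brackets, i.e. that filtration degrees add under $\mu_n^\beta$. Here I would argue: if $v_i \in \mmF_{k_i}\mmg$ for $i = 1,\dots,n$, then each summand $\frac{1}{k!}\mu_{k+n}(\beta,\dots,\beta,v_1,\dots,v_n)$ lies in $\mmF_{k_1+\cdots+k_n}\mmg$, because $\beta \in \mmg^0 = \mmF_1\mmg$ so each $\beta$ contributes at least $1$ to the total filtration degree, and $\mu_{k+n}$ respects filtration on $\mmg$ by hypothesis; thus the summand lies in $\mmF_{k + k_1 + \cdots + k_n}\mmg \subset \mmF_{k_1+\cdots+k_n}\mmg$. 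Completeness of the filtration guarantees the infinite sum over $k$ converges in $\mmg$ and lands in $\mmF_{k_1+\cdots+k_n}\mmg$ since each partial sum does and this submodule is closed in the limit topology. I expect the main (minor) obstacle to be the bookkeeping in establishing the combinatorial identity $M^R_\beta(x) = M^R(x+\beta)$ cleanly — in particular tracking the binomial coefficients $\binom{k+n}{k}$ that arise from choosing positions for the $\beta$'s and checking they cancel the $\frac{1}{k!n!}$ factors against $\frac{1}{(k+n)!}$ correctly, including the shifted Koszul signs; everything after that identity is essentially a two-line substitution argument.
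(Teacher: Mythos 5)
Your proposal is correct and follows essentially the same route as the paper: establish the identity $M^R_{\mmg^\beta}(x)=M^R_\mmg(x+\beta)$ by the binomial regrouping, then verify the curved $\mmS L_\infty$-equation in the form \eqref{powereqalt} via the substitution $\tilde{x}=x+\beta$. Your explicit check that filtration degrees still add under the twisted brackets (using $\beta\in\mmF_1\mmg$ and completeness) is a welcome addition that the paper's proof leaves implicit.
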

		From a geometrical point of view we can interpret a curved $\mmS L_\infty$-algebra $\mmg$ as a formal vector field $Q(x)$ (which, when expanded as a power series, has $\mu_n$ as nth Taylor coefficient) that squares to zero. In this language twisting by $\beta$ corresponds to a displacement of the origin by $-\beta$. Clearly, this new vector field still squares to zero, hence still describes a curved $\mmS L_\infty$-algebra equation. For non-curved $\mmS L_\infty$-algebras however,there is a problem. A vanishing curvature translates to the vector field $Q(x)$ also having a root at the origin. When displacing the origin by some arbitrary $-\beta \in \mmg^0$. this condition in general may no longer hold, hence making the resulting $\mmS L_\infty$-algebra to be curved.\\
		Since this papers focus lies on algebra, let us elaborate on a proof of this claim in the language of algebra.	
		\begin{proof}[Proof of Lemma \ref{Twisting does not destroy}]
			\label{Proof of Twisting does not destroy}
			Let us denote the construction of Equation \eqref{MR} by $M_{\mmg}^R$ and $M_{\mmg^\beta}^R$, respectively, to emphasise which brackets were used.\\
			Let $x \in (\mmg \hat{\otimes} R)^0$ and $\beta = \beta \hat{\otimes} 1$ (by abuse of notation) for $R$ as in Section \ref{power}.\\
			Our first step is to verify
			\begin{align}
				\label{TwistedAlt}
				M_{\mmg^\beta}^R(x) = M_{\mmg}^R(x+ \beta).
			\end{align}
			On one hand we have
			\begin{align*}
				M_\mmg^R (x + \beta) &= \sum_{n \geq 0} \frac{1}{n!} \mu_n (x + \beta, \ldots, x + \beta)\\
				&= \sum_{k \geq  0} \sum_{j \geq 0} \frac{(j+k)!}{j!k!} \frac{1}{(j+k)!} \mu_{j+k} (\ubr{\beta, \ldots,\beta}_{j~\text{times}}, \ubr{x, \ldots, x}_{k~\text{times}})\\
				&= \sum_{k \geq  0} \sum_{j \geq 0} \frac{1}{k!j!} \mu_{j+k} (\ubr{\beta, \ldots,\beta}_{j~\text{times}}, \ubr{x, \ldots, x}_{k~\text{times}}),
			\end{align*}
			where we used multilinearity, $\abs{x}=0$ and $\abs{\beta}=0$, relabelled the sums $n=j+k$ and made use of some elementary combinatorics, which yields that there are exactly $\binom{j+k}{j}= \frac{(j+k)!}{j!k!}$ combinations to have an $j+k$ bracket with $j$-many $\beta$ and $k$-many $x$.\\
			On the other hand also		
			\begin{align*}
				M_{\mmg^\beta}^R (x) \meqc{\eqref{MR}}  \sum_{n \geq 0} \frac{1}{n!} \mu_n^\beta (x, \ldots, x) \meqc{\eqref{Twisted}} \sum_{n \geq 0} \sum_{j \geq 0} \frac{1}{n! j!} \mu_{n+j}(\ubr{\beta, \ldots,\beta}_{j ~\text{times}},\ubr{x, \ldots, x}_{m~\text{times}})
			\end{align*}
			holds, which proves Equation \eqref{TwistedAlt}.\\
			In turn, Equation \eqref{TwistedAlt} allows for the following relation (with $R^\prime$ as in Section \ref{power})
			\begin{align*}
				M_{\mmg^\beta}^{R\otimes R^\prime}(x \hat{\otimes} 1 +  M_{\mmg^\beta} (x) \hat{\otimes} \epsilon )\meqc{\eqref{TwistedAlt}} M_\mmg^{R\otimes R^\prime} (x\hat{\otimes} 1 + \beta \hat{\otimes} 1+ M_\mmg^R (x+\beta)\hat{\otimes}  \epsilon)\\
				\meqc{\tilde{x} = x + \beta} M_\mmg^{R \otimes R^\prime} ( \tilde{x}\hat{\otimes} 1 + M_\mmg^R (\tilde{x})\hat{\otimes} \epsilon) 	\meqc{\eqref{powereqalt}} M_\mmg^R (\tilde{x})\hat{\otimes} 1 = M_\mmg^R (x+\beta) \hat{\otimes} 1 \meqc{\eqref{TwistedAlt}} M_{\mmg^\beta}^R (x)\hat{\otimes}1,
			\end{align*}
			which is just the $\mmS L_\infty$-algebra equation for $\mmg^\beta$ in the language of Equation \eqref{powereqalt}.
		\end{proof}
		Another technical lemma we need in the proof of the main theorem is the following:
		\begin{lemma}
			\label{Twisting and Spectral}
			Let $\mmg$ be  curved $\mmS L_\infty$-algebra equipped with a descending bounded above and complete filtration $\mmg = \mmF_1 \mmg \supset \mmF_2 \mmg \supset \ldots$ compatible with the $\mmS L_\infty$-algebra structure and let $r \in \mathbb{N}_0$ be an integer for which the curvature $\mu_0$ of $\mmg$ satisfies $\mu_0 \in \mmF_{2r+1} \mmg^1$ and the $r+1$st page of the spec.seq. vanishes, i.e. $E_{r+1}^{\bullet, \bullet}(\mmg)=0$.\\
			If $\alpha \in \mmF_{r+1} \mmg^0$, then twisting of $\mmg$ by $\alpha$ does preserve the condition of having a vanishing $r+1$st page of the spec.seq., i.e. $E_{r+1}^{\bullet, \bullet}(\mmg^\alpha) =0$ holds.\\
			This result also holds for each of the total degrees individually.
		\end{lemma}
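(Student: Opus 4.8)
The plan is to compare, page by page up to the $(r+1)$st, the spectral sequence of $(\mmg,\mu_1)$ with that of $(\mmg^\alpha,\mu_1^\alpha)$ and to show that these first $r+1$ pages coincide. Evaluating Definition~\ref{Twisted curved SLinfty algebra} (Equation~\eqref{Twisted}) at $n=0$ and at $n=1$ gives $\mu_0^\alpha=\sum_{k\ge 0}\tfrac1{k!}\mu_k(\alpha,\dots,\alpha)=M(\alpha)$ and $\mu_1^\alpha=\mu_1+N$, where $N(\,\cdot\,)=\sum_{k\ge 1}\tfrac1{k!}\mu_{k+1}(\alpha,\dots,\alpha,\,\cdot\,)$. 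Since $\alpha\in\mmF_{r+1}\mmg^0$ and filtration degrees add under the brackets, each summand of $N$ raises the filtration by at least $r+1$, so $N(\mmF_p\mmg)\subseteq\mmF_{p+r+1}\mmg$; moreover every summand of $M(\alpha)$ with $k\ge 2$ lies in $\mmF_{2r+2}\mmg^1\subseteq\mmF_{2r+1}\mmg^1$, and $\mu_0\in\mmF_{2r+1}\mmg^1$ by hypothesis, from which a short check shows $\mu_0^\alpha$ is again deep enough in the filtration for the construction of Section~\ref{Spectral Sequence} to produce the pages $E_0(\mmg^\alpha),\dots,E_{r+1}(\mmg^\alpha)$.

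I would then show, by induction on $s\in\{0,1,\dots,r+1\}$, that $E_s^{p,q}(\mmg^\alpha)$ and $E_s^{p,q}(\mmg)$ are literally the same subquotient of $\mmg^{p+q}$, and that for $s\le r$ the induced differentials $d_s$ agree on them; equivalently, that the identity map of $\mmg$ induces an isomorphism of the first $r+1$ pages. The case $s=0$ is immediate: $E_0^{p,q}=\mmF_p\mmg^{p+q}/\mmF_{p+1}\mmg^{p+q}$ does not see the brackets, and since $N$ raises the filtration by $\ge r+1\ge 1$ it induces the zero map on $E_0$, so $\mu_1$ and $\mu_1^\alpha=\mu_1+N$ induce the same $d_0$. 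For the inductive step one reads off formula~\eqref{rthpage}: the numerator $\{z\in\mmF_p\mmg^{p+q}:\mu_1 z\in\mmF_{p+s}\mmg^{p+q+1}\}$ and the first term of the denominator are unaffected when $\mu_1$ is replaced by $\mu_1^\alpha$, because $N(\mmF_p\mmg)\subseteq\mmF_{p+r+1}\mmg\subseteq\mmF_{p+s}\mmg$ for every $s\le r+1$; a parallel estimate, using in addition $\mu_1^2=-\mu_2(\mu_0,\cdot)$ (Equation~\eqref{onebracketsquared}) and its counterpart $(\mu_1^\alpha)^2=-\mu_2^\alpha(\mu_0^\alpha,\cdot)$ for $\mmg^\alpha$, shows the second term of the denominator is unaffected as well, so $E_s^{p,q}(\mmg^\alpha)=E_s^{p,q}(\mmg)$; and for $s\le r$ the perturbation $N$ carries a representative of a class in $E_s^{p,q}$ into filtration degree $\ge p+r+1>p+s$, hence to zero in $E_s^{p+s,q-s+1}$, so $d_s(\mmg^\alpha)=d_s(\mmg)$. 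Specialising to $s=r+1$ gives $E_{r+1}^{p,q}(\mmg^\alpha)=E_{r+1}^{p,q}(\mmg)=0$ for all $p,q$, which is the claim, bidegree by bidegree and hence also in each total degree.

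The step I expect to be the real obstacle is controlling the second term of the denominator of~\eqref{rthpage}, namely $\{\mu_1(y)\cap\mmF_p\mmg^{p+q}:y\in\mmF_{p-s+1}\mmg^{p+q-1}\}$, under the replacement of $\mu_1$ by $\mu_1^\alpha=\mu_1+N$: the error $\mu_1^\alpha(y)-\mu_1(y)=N(y)$ only lies in $\mmF_{p-s+1+r+1}\mmg$, which for $s$ near $r+1$ is barely $\mmF_{p+1}\mmg$, so one must argue that this error is again a genuine denominator element rather than something that could enlarge or shrink the subquotient, and this is exactly where the filtration bound on $N$ must be played off carefully against the relation $(\mu_1^\alpha)^2=-\mu_2^\alpha(\mu_0^\alpha,\cdot)$; the same bookkeeping underlies the claim in the first paragraph that the pages of $\mmg^\alpha$ are defined. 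Two observations streamline matters. First, twisting is invertible: $(\mmg^\alpha)^{-\alpha}=\mmg$ with $-\alpha\in\mmF_{r+1}\mmg^0$ by Lemma~\ref{Twisting does not destroy}, so the comparison is symmetric and it suffices to establish one inclusion of subquotients at each page. Second, one may instead argue in the cohomological-vector-field language of Section~\ref{power}, where twisting by an $\mmF_{r+1}$-element is a filtration-preserving change of coordinates that is the identity modulo $\mmF_{r+1}$, so that the linearisations $\mu_1$ and $\mu_1^\alpha$ of the cohomological vector field at $0$ and at $\alpha$ differ by an operator raising the filtration by $\ge r+1$, the single fact that powers all of the above estimates.
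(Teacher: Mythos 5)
Your plan — to show that $E_s^{p,q}(\mmg^\alpha)$ and $E_s^{p,q}(\mmg)$ are \emph{literally the same} subquotients for all $s\le r+1$, with the same differentials for $s\le r$ — rests on two claims that do not hold, and the second is precisely the point you flag as ``the real obstacle'' without resolving it. First, the twisted curvature is not ``deep enough'': $\mu_0^\alpha=M(\alpha)$ contains the summand $\mu_1(\alpha)$, which for $\alpha\in\mmF_{r+1}\mmg^0$ lies only in $\mmF_{r+1}\mmg^1$, not in $\mmF_{2r+1}\mmg^1$. The construction of Section \ref{Spectral Sequence} therefore produces pages of $\mmg^\alpha$ only up to roughly page $r/2+1$, so for $s$ near $r+1$ your $E_s(\mmg^\alpha)$ is not even known to be a well-defined quotient: the boundary part of the denominator of \eqref{rthpage} need not be contained in the numerator, because $(\mu_1^\alpha)^2(y)=-\mu_2^\alpha(\mu_0^\alpha,y)$ for $y\in\mmF_{p-s+1}\mmg$ only lands in $\mmF_{p+r-s+2}\mmg$, which contains $\mmF_{p+s}\mmg$-depth only when $s\le (r+2)/2$. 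Second, even where the pages are defined, literal equality fails at the boundary term: for $y\in\mmF_{p-s+1}\mmg^{p+q-1}$ the correction $N(y)=\mu_1^\alpha(y)-\mu_1(y)$ lies only in $\mmF_{p+r-s+2}\mmg$, i.e.\ for $s$ near $r+1$ it is merely an $\mmF_{p+1}$-element; it is in general neither a boundary nor an element of the first denominator piece $\{x\in\mmF_{p+1}\mmg:\mu_1(x)\in\mmF_{p+s}\mmg\}$ (that would force $\mu_1(N(y))\in\mmF_{p+s}\mmg$, which your filtration bound gives only for $s\le(r+2)/2$). So the twisted and untwisted denominators genuinely differ as subspaces, your induction does not close, and the same conflation (``lies in $\mmF_{p+s+1}$'' versus ``is zero in $E_s$'') undermines the claim $d_s(\mmg^\alpha)=d_s(\mmg)$. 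Neither of your streamlining remarks repairs this: invertibility of twisting only makes the unproven inclusion symmetric, and the vector-field picture reproduces the same estimate on $N$.

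The paper's proof avoids comparing pages altogether. It unpacks the single hypothesis $E_{r+1}^{p,q}(\mmg)=0$ into the elementwise statement \eqref{Spectral Alt}: for every $x\in\mmF_p\mmg^{p+q}$ with $\mu_1(x)\in\mmF_{p+r+1}\mmg$ there exists $y\in\mmF_{p-r}\mmg^{p+q-1}$ with $\mu_1(y)\in\mmF_p\mmg$ and $x-\mu_1(y)\in\mmF_{p+1}\mmg$. This statement pins $x$ down only modulo $\mmF_{p+1}\mmg$, and that slack is exactly what absorbs the twist: by \eqref{Twisted} one has $\mu_1^\alpha=\mu_1+\mathcal{O}(\mmF_{p+r+1}\mmg)$ on $\mmF_p\mmg$ and $\mu_1^\alpha=\mu_1+\mathcal{O}(\mmF_{p+1}\mmg)$ on $\mmF_{p-r}\mmg$, so the very same witness $y$ works for $\mmg$ and for $\mmg^\alpha$, in both directions, and the vanishing condition is invariant under twisting, bidegree by bidegree. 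Your proposal becomes correct if you replace ``the pages coincide'' by this weaker elementwise invariance; as written, the step you yourself identify as the crux is missing, and the stronger statement you plan to prove instead is not available.
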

		\begin{proof}[Proof of Lemma \ref{Twisting and Spectral}]
			\label{Proof of Twisting and Spectral}
			Recalling Equation \eqref{rthpage}, $E_{r+1}^{p,q} =0$ means that
			\begin{align}
				\label{Spectral Alt}
				\left\{
				\begin{aligned}
					\forall~x \in \mmF_p \mmg^{p+q}~\text{s.t.}~ \mu_1(x)\in \mmF_{p+r+1} \mmg^{p+q+1}\\
					\exists y \in \mmF_{p-r} \mmg^{p+q-1}~\text{s.t.} \\
					\mu_1 (y) \in \mmF_p \mmg^{p+q}~\mand~x-\mu_1 (y) \in \mmF_{p+1} \mmg^{p+q}.
				\end{aligned}
				\right.
			\end{align}
			Let $x \in \mmF_p \mmg^{p+q}$ and $\alpha \in \mmF_{r+1} \mmg^0$, then 
			\begin{align*}
				\mu_1^\alpha (x)= \mu_1 (x)+ \ubr{\mu_2 (\ubr{\alpha}_{\in \mmF_{r+1} \mmg^0}, \ubr{x}_{\mmF_{p} \mmg^{p+q}})}_{\in \mmF_{r+1+p} \mmg^{p+q+1}} + \frac{1}{2!} \mu_3 (\alpha, \alpha,x)+ \ldots
				= \mu_1 (x) +\mathcal{O}( \mmF_{r+1+p} \mmg^{p+q+1})
			\end{align*}
			holds.\\
			But on the other hand, for $y \in \mmF_{p-r} \mmg^{p+q-1}$ we also have
			\begin{align*}
				\mu_1^\alpha (y) = \mu_1 (y) + \ubr{\mu_2 (\ubr{\alpha}_{\in \mmF_{r+1} \mmg^0},\ubr{y}_{\in \mmF_{p-r} \mmg^{p+q-1}}	)}_{\in \mmF_{p+1} \mmg^{p+q}} + \frac{1}{2!} \mu_3( \alpha, \alpha , y) + \ldots = \mu_1 (y) + \mathcal{O} (\mmF_{p+1} \mmg^{p+q}),
			\end{align*}
			allowing us to replace all the expressions in Equation \ref{Spectral Alt} with its twisted counterparts (and vice versa), so particularly the statement $E_{r+1}^{\bullet, \bullet} =0$ does not change under twisting.
		\end{proof}
		One more technical lemma allows us in the setting of Theorem \ref{MainThm} to twist the curved \sLiA in such a way that the filtration degree of the curvature of the twisted $\mmS L_\infty$-algebras curvature is raised by one when compared to the curvature of the original untwisted $\mmS L_\infty$-algebra.
		
		\begin{lemma}
			\label{Twisting towards the final destination}
			Let $\mmg$ be a curved $\mmS L_\infty$-algebra equipped with a descending, bounded above and complete filtration $ \mmg= \mmF_1 \mmg \supset \mmF_2 \mmg \supset \mmF_3 \mmg \supset \ldots $ compatible with the curved $\mmS L_\infty$-algebra structures.\\
			Let $r \in \mathbb{N}_0$ and $k \geq 2r+1$ be some integer for which the curvature $\mu_0$ of $\mmg$ satisfies $\mu_0 \in \mmF_k \mmg$ and $E_{r+1}^{p,q} =0$ holds for all $p,q$ with $p+q=1$.\\
			Then there exists an $\alpha \in \mmF_{k-r} \mmg^0$ such that $\mu_0 - \mu_1 (\alpha) \in \mmF_{k+1} \mmg^1$ and the curvature $\mu_0^{-\alpha}$ of the twisted curved $\mmS L_\infty$-algebra $\mmg^{-\alpha}$ satisfies $\mu_0^{-\alpha} \in \mmF_{k+1} \mmg^1$. 
		\end{lemma}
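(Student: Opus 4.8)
The strategy is to read $\alpha$ off from the vanishing of the $(r+1)$st page of the spectral sequence, after noticing that the curvature is itself a $\mu_1$-cocycle lying in the right bidegree. Indeed, the $n=0$ instance of the curved $\mmS L_\infty$-relation \eqref{LinftyEq} consists of the single term $\mu_1(\mu_0)$, so $\mu_1(\mu_0)=0$; and since $k\geq 2r+1$ we have $\mu_0\in\mmF_{2r+1}\mmg^1$, which is exactly the condition under which the pages $E_0,\dots,E_{r+1}$ of the spectral sequence of $(\mmg,\mu_1)$ are well defined (cf.\ Section~\ref{Spectral Sequence}). Thus $\mu_0\in\mmF_k\mmg^1$ is a legitimate representative at bidegree $(p,q)=(k,1-k)$, whose total degree is $p+q=1$.

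First I would invoke the hypothesis $E_{r+1}^{p,q}=0$ for $p+q=1$ in the unpacked form~\eqref{Spectral Alt} established in the proof of Lemma~\ref{Twisting and Spectral}. Applying it with $x=\mu_0$ and $p=k$ — which is admissible since $\mu_1(\mu_0)=0\in\mmF_{k+r+1}\mmg^2$ trivially — yields an element $y\in\mmF_{k-r}\mmg^0$ with $\mu_1(y)\in\mmF_k\mmg^1$ and $\mu_0-\mu_1(y)\in\mmF_{k+1}\mmg^1$. Put $\alpha\meqd y$; this already delivers $\alpha\in\mmF_{k-r}\mmg^0$ and the first assertion $\mu_0-\mu_1(\alpha)\in\mmF_{k+1}\mmg^1$.

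It remains to compute the curvature $\mu_0^{-\alpha}$ of $\mmg^{-\alpha}$ and to bound its filtration degree. Expanding \eqref{Twisted} at $n=0$ with $\beta=-\alpha$, and using that $\alpha$ is of degree $0$ so that no Koszul signs arise,
\[
\mu_0^{-\alpha}\;=\;\sum_{n\geq 0}\frac{(-1)^n}{n!}\,\mu_n(\alpha,\dots,\alpha)\;=\;\bigl(\mu_0-\mu_1(\alpha)\bigr)\;+\;\sum_{n\geq 2}\frac{(-1)^n}{n!}\,\mu_n(\alpha,\dots,\alpha),
\]
the series converging by completeness of the filtration. The first summand lies in $\mmF_{k+1}\mmg^1$ by the previous step; for $n\geq 2$, compatibility of the filtration with the brackets gives $\mu_n(\alpha,\dots,\alpha)\in\mmF_{n(k-r)}\mmg^1\subseteq\mmF_{2(k-r)}\mmg^1\subseteq\mmF_{k+1}\mmg^1$, the last inclusion being precisely the inequality $2(k-r)\geq k+1$, i.e.\ $k\geq 2r+1$. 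Hence $\mu_0^{-\alpha}\in\mmF_{k+1}\mmg^1$, the second assertion. The only delicate point — and the reason the bound $k\geq 2r+1$ appears in the hypothesis — is exactly this last step: the page-$(r+1)$ argument only produces $\alpha$ in $\mmF_{k-r}$ rather than in $\mmF_k$, so the quadratic-and-higher twisted-curvature terms $\mu_n(\alpha,\dots,\alpha)$ with $n\geq 2$ are only guaranteed to lie in $\mmF_{2(k-r)}$, and one needs $2(k-r)\geq k+1$ for them to be absorbed; the same inequality also underwrites the availability of the page $E_{r+1}$ used in the first step.
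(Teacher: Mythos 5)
Your proof is correct and follows essentially the same route as the paper: it extracts $\alpha$ from the vanishing of $E_{r+1}$ in total degree $1$ via the unpacked form \eqref{Spectral Alt} applied to the closed element $\mu_0$ at $(p,q)=(k,1-k)$, and then bounds the higher terms $\mu_{n\geq 2}(\alpha,\dots,\alpha)$ of the twisted curvature by $\mmF_{2(k-r)}\mmg^1\subset\mmF_{k+1}\mmg^1$ using $k\geq 2r+1$, exactly as in Equation \eqref{Twisted Curvature}.
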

		\begin{proof}[Proof of Lemma \ref{Twisting towards the final destination}]
			\label{Proof of Twisting towards the final destination}
			A direct consequence of the curved $\mmS L_\infty$-algebra equations is the curvature being closed $\mu_1 ( \mu_0) =0$. This particularly means that $\mu_0$ fits into the scheme of Equation \eqref{Spectral Alt} (for $p=k$ and $q=1-k$) and we find
			\begin{align}
				\label{Exists a twister}
				\left\{
				\begin{aligned}
					\exists \alpha \in \mmF_{k-r} \mmg^0~\mst\\
					\mu_1 (\alpha ) \in \mmF_k \mmg^1~\mand~\mu_0 - \mu_1 (\alpha) \in \mmF_{k+1} \mmg^1.
				\end{aligned}
				\right.	
			\end{align}
			Explicitly expanding the expression of the twisted curvature $\mu_0^{-\alpha}$ yields
			\begin{equation}
				\label{Twisted Curvature}
				\mu_0^{-\alpha} \meqc{\eqref{Twisted}} \ubr{\mu_0 - \mu_1 (\alpha)}_{\minc{\eqref{Exists a twister}} \mmF_{k+1} \mmg^1 } + \frac{1}{2!}\hspace{-1em}  \ubr{\mu_2 (\alpha, \alpha)}_{\in \mmF_{2 (k-r)} \mmg^1 \substack{\subset\\k \geq 2r+1} \mmF_{k+1} \mmg^1} \hspace{-2em} + \ldots\in \mmF_{k+1} \mmg^1.
			\end{equation}
		\end{proof}
		Having stated these small technical lemmata, we can now pursue the proof of the main theorem of this paper
		\begin{theorem}[Main Theorem]
			\label{Main Theorem}
			Let $\mmg$ be a curved $L_\infty$-algebra equipped with a descending, bounded above and complete filtration $	\mmg= \mmF_1 \mmg \supset \mmF_2 \mmg \supset \mmF_3 \mmg \supset \ldots$ compatible with the curved $L_\infty$-algebra structures.\\
			If there exists an integer $r \in \mathbb{N}_0$ for which the curvature $\mu_0$ of $\mmg$ satisfies $\mu_0 \in \mmF_{2r+1} \mmg^2$ and the $r+1$st page of the spec.seq. of $\mmg$ vanishes in total degree 2, i.e. $E_{r+1}^{p,q}=0$ for all $p,q$ with $p+q=2$, then there exists a Maurer-Cartan element $\alpha \in \mMC(\mmg)$ that satisfies $\alpha \in \mmF_{r+1} \mmg$. In particular\footnote{$\alpha \in \mMC(\mmg)$ means $M(\alpha)=0$ and hence $M^R(\alpha \hat{\otimes} R)=0$. $\mmg^\alpha$ being flat on the other hand is tantamount to $M_{\mmg^\alpha}^R(0)=0$, as in $M_{\mmg^\alpha}^R(0)$, due to multilinearity of the higher ($n \geq 1$) brackets, only the curvature $\mu_0^\alpha$ survives. In Equation \eqref{TwistedAlt} we found $M_{\mmg^\alpha}^R (x) = M_\mmg (x + \alpha)$ which eventually shows the statement.}, $\mmg^\alpha$ then forms a non-curved $L_\infty$-algebra.
		\end{theorem}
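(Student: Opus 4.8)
The plan is to reduce the curvature to zero by an iterated twist, and then pass to the limit using completeness of the filtration. I argue in the shifted ($\mmS L_\infty$) picture of Section \ref{power}, in which the curvature has total degree $1$, Maurer--Cartan elements sit in degree $0$, and the hypotheses of the statement read $\mu_0\in\mmF_{2r+1}\mmg^1$ and $E_{r+1}^{p,q}=0$ for $p+q=1$. The starting observation --- essentially the footnote to the statement together with Equation \eqref{TwistedAlt} --- is that for a degree $0$ element $\gamma$ the curvature of $\mmg^{\gamma}$ equals $M(\gamma)$; hence finding a Maurer--Cartan element in $\mmF_{r+1}\mmg^0$ is the same as finding $\gamma\in\mmF_{r+1}\mmg^0$ for which $\mmg^{-\gamma}$ is flat. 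I also record that twisting is additive, $(\mmg^{\beta})^{\gamma}=\mmg^{\beta+\gamma}$, which is immediate from \eqref{TwistedAlt} since $M_{(\mmg^{\beta})^{\gamma}}^R(x)=M_{\mmg^{\beta}}^R(x+\gamma)=M_{\mmg}^R(x+\beta+\gamma)$.

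First I would run the induction. Put $\mmg^{(0)}:=\mmg$ and $\beta_0:=0$, and suppose we have built $\beta_n\in\mmF_{r+1}\mmg^0$ with $\mmg^{(n)}:=\mmg^{-\beta_n}$ having curvature $\mu_0^{(n)}\in\mmF_{2r+1+n}\mmg^1$. By Lemma \ref{Twisting does not destroy}, $\mmg^{(n)}$ is again a curved $\mmS L_\infty$-algebra with a descending, bounded above, complete, compatible filtration; by Lemma \ref{Twisting and Spectral} applied to $\mmg$ with the twist $-\beta_n\in\mmF_{r+1}\mmg^0$ (in its per--total--degree form), we still have $E_{r+1}^{p,q}(\mmg^{(n)})=0$ for $p+q=1$. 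Since $\mu_0^{(n)}\in\mmF_{2r+1+n}\mmg^1$ and $2r+1+n\geq 2r+1$, Lemma \ref{Twisting towards the final destination} applies with $k=2r+1+n$ and produces $\alpha_{n+1}\in\mmF_{(2r+1+n)-r}\mmg^0=\mmF_{r+1+n}\mmg^0$ such that the curvature of $(\mmg^{(n)})^{-\alpha_{n+1}}$ lies in $\mmF_{2r+2+n}\mmg^1$. Setting $\beta_{n+1}:=\beta_n+\alpha_{n+1}$ and using additivity of twisting yields $\mmg^{(n+1)}=\mmg^{-\beta_{n+1}}$ with curvature in $\mmF_{2r+2+n}\mmg^1$; moreover $\beta_{n+1}\in\mmF_{r+1}\mmg^0$, closing the induction.

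Next I would pass to the limit. Since $\alpha_{n+1}\in\mmF_{r+1+n}\mmg^0$, the partial sums $\beta_n$ form a Cauchy net for the filtration topology, so by completeness \eqref{W14B4} they converge to $\beta:=\sum_{n\geq1}\alpha_n\in\mmF_{r+1}\mmg^0$. It remains to check that $\mmg^{-\beta}$ is flat, i.e. $M(-\beta)=0$. For this I would note that $M$ is filtration--continuous: if $x-x'\in\mmF_p\mmg$ then, by multilinearity and compatibility of the filtration (using that every element lies in $\mmF_1\mmg$), each $\mu_k(x,\dots,x)-\mu_k(x',\dots,x')$ with $k\geq1$ lies in $\mmF_p\mmg$, while the $\mu_0$-terms cancel, so $M(x)-M(x')\in\mmF_p\mmg$. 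Hence $M(-\beta)=\lim_n M(-\beta_n)$, and $M(-\beta_n)$ is exactly the curvature $\mu_0^{(n)}$ of $\mmg^{(n)}=\mmg^{-\beta_n}$, which lies in $\mmF_{2r+1+n}\mmg^1$ and therefore tends to $0$ (the filtration is Hausdorff, $\bigcap_p\mmF_p\mmg=0$, by \eqref{W14B4}). Thus $M(-\beta)=0$, so $\alpha:=-\beta\in\mMC(\mmg)$ and $\alpha\in\mmF_{r+1}\mmg$; by the footnote to the statement, $\mmg^{\alpha}$ is then the desired flat $L_\infty$-structure. Unshifting returns the statement for the curved $L_\infty$-algebra $\mmg$, the total--degree--$2$ and degree--$2$ hypotheses corresponding to the total--degree--$1$ and degree--$1$ ones used above.

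As for the main obstacle: all the genuine content has been front-loaded into Lemma \ref{Twisting towards the final destination}, where the vanishing of $E_{r+1}$ in the relevant total degree is used --- via the closedness $\mu_1(\mu_0)=0$ of the curvature --- to produce a single twister that raises the filtration degree of the curvature by one; the hypothesis $\mu_0\in\mmF_{2r+1}\mmg$ is precisely what makes the estimate $2(k-r)\geq k+1$, controlling the $\mu_2(\alpha,\alpha)$-term, available at the first step. Granting that lemma, the only delicate points in the theorem itself are bookkeeping --- verifying that the hypotheses of the three lemmas persist along the induction (the filtration degree of the curvature only increases, and every twist employed is drawn from $\mmF_{r+1}\mmg^0$) --- and the convergence/continuity argument for passing to the limit. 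I expect the bookkeeping, and in particular keeping the shift between the $L_\infty$ and $\mmS L_\infty$ conventions straight, to be where care is needed, but no further idea is required.
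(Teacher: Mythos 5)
Your proposal is correct and follows essentially the same route as the paper's own proof: iterated twisting via Lemmas \ref{Twisting does not destroy}, \ref{Twisting and Spectral} and \ref{Twisting towards the final destination}, additivity of twists from Equation \eqref{TwistedAlt}, and convergence of the accumulated twister by completeness of the filtration, with the shift between total degree 2 (unshifted) and total degree 1 (shifted) handled as in the paper. Your limit step is in fact spelled out a bit more carefully than the paper's (filtration-continuity of $M$ and Hausdorffness from \eqref{W14B4}), but it is the same argument.
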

		\begin{proof}[Proof of the Main Theorem]
			\label{Proof of the Main Theorem}
			As before we do all our calculations in the language curved $\mmS L_\infty$-algebras, i.e. we work with $\mmg[-1]$ instead of $\mmg$  and curved $\mmS L_\infty$-algebra brackets to avoid having ubiquitous suspensions in the expressions.\\
			We describe a procedure of repeated twisting such that in every step the curvature of the twisted $\mmS L_\infty$-algebra gets raised by one. Moreover, we explain why this construction converges and forms a non-curved $\mmS L_\infty$-algebra.\\
			For $k \geq 2r+1$, according to Lemma \ref{Twisting towards the final destination}, there exists an 
			$\alpha_1 \in \mmF_{k-r} \mmg^0$ such that the curvature $\mu_0^{\alpha_1}$ of the curved \sLiA $\mmg^{\alpha_1}$ satisfies $\mu_0^{\alpha_1} \in \mmF_{k+1} \mmg^1$. Since $\alpha_1 \in \mmF_{k-r} \mmg^0 \substack{\subset\\ k \geq 2r+1} \mmF_{r+1} \mmg^0$, by virtue of Lemma \ref{Twisting and Spectral} $E_{r+1}^{p,q} (\mmg^{\alpha_1}) =0$ for $p+q=1$ still holds.\\
			As a consequence, we can repeat the very same process with $\mmg$ replaced by $\mmg^{\alpha_1}$ and $k+1$ instead of $k$ and so eventually find an $\alpha_2 \in \mmF_{k-r+1} \mmg^0$ such that the curvature $\mu_0^{{\alpha_1}^{\alpha 2}}$ of $\mmg^{{\alpha_1}^{\alpha_2}}$ satisfies ${\mu_0}^{{\alpha_1}^{\alpha_2}} \in \mmF_{k-r+2}$. Of course this twisted curved $\mmS L_\infty$-algebra still preserves $E_{r+1}^{p,q} (\mmg^{{\alpha_1}^{\alpha 2}}) =0$ for all $p,q$ with $p+q=1$. But (see Equation \eqref{TwistedAlt}) $\mmg^{{\alpha_1}^{\alpha 2}}$ is the same as $\mmg^{\alpha_1 + \alpha_2}$.\\
			In other words we have a way of raising the degree of filtration of the curvature of the twisted curved $\mmS L_\infty$-algebra by one without destroying the condition $E_{r+1}^{p,q}=0$ for $p+q=1$.\\
			Repeated application of this twisting procedure and making use of the completeness of the filtration allows us to eventually obtain a non-trivial twisted curved \sLiA (still on the vector space $\mmg$) with zero curvature. Also notice that by Lemma \ref{Twisting towards the final destination} we have $\alpha_i \in \mmF_{k-r} \mmg^0$, i.e. the element $\alpha_i$ we twist with also increases its degree of filtration in every step. Together with the completeness of the filtration this lets the sum $\alpha_1 + \alpha_2 + \ldots$ converge and therefore makes the construction well-defined.\\
			By means of the equivalence between a curved $\mmS L_\infty$-algebra $\mmg^\alpha$ being flat and $-\alpha$ forming a Maurer-Cartan element of $\mmg$, this shows $\alpha = -\alpha_1-\alpha_2- \ldots$ to satisfy the requirements.
		\end{proof}
		
		\section{Applications}
		\label{Applications}
		We start by presenting the general idea behind using Theorem \ref{MainThm} in the search of $\infty$(-quasi) isomorphisms.\\
		So, let $P$ be a possibly inhomogeneous Koszul operad. A $P_\infty$-algebra, sometimes referred to as strong homotopy algebra\index{strong homotopy algebra}\index{$P_\infty$-algebra}, is an algebra over the cobar-construction $\Omega P^\antishriek$ of the Koszul dual co-operad $P^\antishriek$. Notice, that even though in the definition of inhomogeneous Koszul operads we do not allow for operads with differentials other than the trivial one, the Koszul dual co-operad as well as its cobar construction both do generally carry a non-trivial (co)-differential. Prominent examples of such $P_\infty$-algebras, apart from the $L_\infty$-algebras from before, are $\mathrm{Com}_\infty$-algebras, $\mathrm{A}_\infty$-algebras and $BV_\infty$-algebras.\\
		\begin{remark}
			The main difference in the notion of $P_\infty$-algebras for $P$ being an inhomogeneous Koszul operad, as introduced in \cite{TonksBV}, Appendices A,B, compared to the `classical case' of Koszul operads (with quadratic relations only) is that  a $P_\infty$-algebra still is a $\Omega P^\antishriek$-algebra, but with the construction of $P^\antishriek$ being slightly more involved. More precisely, $P^\antishriek$ is defined as the `classical' Koszul dual of $qP$ with $q$ the map that removes the linear part of from the relations and some special co-differential is given to $(qP)^\antishriek$. For most of the relevant properties, among others the `Rosetta Stone' of equivalent descriptions of $P_\infty$-algebras (see \cite{TonksBV}, Theorem 1), the inhomogeneous Koszul case does not differ from the 'classic' quadratic case. Notice, however, that $\Omega P^\antishriek$, in general, does no longer form a minimal resolution of $P$.
		\end{remark} 
		An $\infty$-morphism between two $P_\infty$-algebras $F: A \rightsquigarrow B$ is defined as a $P^\antishriek$-co-algebra (in dgVect, hence compatible with the respective co-differentials) morphism between the corresponding relative (relative w.r.t. Koszul morphism $\iota: P ^\antishriek \to \Omega P^\antishriek = P_\infty$) bar constructions $F:B_\iota (A) \to B_\iota(B)$ (cf. \cite{TonksBV}, Section B.2).\\
		Recall from \cite{TonksBV}, Section B.1 that the relative bar construction of the $P_\infty$-algebra $A$ is a quasi-co-free co-algebra, hence an $\infty$-morphism $ F: A \rightsquigarrow B$ is completely determined by its projection\footnote{For the projection $\pi_B: \mathbb{F}_{P^\antishriek}^c (B) \to B$ defined by
			\begin{align*}
				\mathbb{F}_{U(P^\antishriek)}^c (B)  \stackrel{\eta \circ Id_B}{\to} I \otimes_{\mathbb{S}_1}B \cong B,
			\end{align*}
			denotes $\epsilon: P^\antishriek \to I$ denotes the co-operadic co-unit.} to $B$, that is a graded vector space morphism $f: \mathbb{F}_{U(P^\antishriek)}^c (U(A)) \to B$.\\
		It is crucial to notice that while every graded vector space morphism $f:\mathbb{F}_{U(P^\antishriek)}^c (U(A))  \to U(B)$ induces a graded $U(P^\antishriek)$-co-algebra morphism $\rho (f):\mathbb{F}_{U(P^\antishriek)}^c (U(A))) \to \mathbb{F}_{U(P^\antishriek)}^c (U(B)) $ (in the underlying category grVect), compatibility with the co-differentials does not hold in general.\\	
		However, there is a way to work on the level of $\mHom_{grVect} (\mathbb{F}_{U(P^\antishriek)}^c(U(A)), U(B))$\footnote{Since we later endow this space with a $\mmS L_\infty$-structure, we do also want to allow for maps of degree other than zero and therefore work with $\mHom_{grVect}$ instead of $\mMor_{grVect}$. Of course, only the degree 0 elements are the ones forming $U(P^\antishriek)$-co-algebra-morphisms.} and analyse which elements induce co-free $U(P^\antishriek)$-co-algebra morphisms that are compatible with the respective co-differentials and as such form $\infty$-morphisms. To this end we define a formal vector field $Q$ on $\mMor_{U(P^\antishriek)-coalg.}(\mathbb{F}_{U(P^\antishriek)}^c (U(A)), \mathbb{F}_{U(P^\antishriek)}^c (U(B)))$ by the commutator with the co-differentials emerging from the bar-construction (that means maps compatible with the differential are the roots of $Q$) and pull this back along $\rho$. This results in a $\mmS L_\infty$-algebra structure on the graded vector space $\mHom_{grVect}(\mathbb{F}_{U(P^\antishriek)}^c (U(A)),U(B))$ and we may call this $\mmS L_\infty$-algebra deformation complex\index{Deformation Complex} $\mathrm{Def}(A,B)$.\\
		Maurer-Cartan elements on the Deformation Complex correspond to graded vector space morphisms whose induced $U(P^\antishriek)$-co-algebra morphisms commute with the differentials, that is to say they induce $\infty$-morphisms. Notice that the zero map also induces a $U(P^\antishriek)$-co-algebra morphism that is compatible with the differentials.\\	
		Let us explain why the search of a Maurer-Cartan element of a curved $\mmS L_\infty$-algebra is of relevance in the construction of $\infty$-(quasi-)isomorphism between $P_\infty$-algebras. For the scope of this motivation let us assume $P$ to be a binary generated (a requirement we will relax in the proper form of the theorem) possibly inhomogeneous  Koszul operad and let $A,B$ be two $P_\infty$-algebras. Consider the situation in which a (quasi-)isomorphism of dg vector spaces ${f_1 : A \to B}$ is given. We may ask: Is there an $\infty$-morphism $F: A \rightsquigarrow B$ which composed with the projection $\pi_B: \mathbb{F}_{P^\antishriek}^c (B) \to B$ yields $f_1$ (and as such by Definition \ref{Infinity (Quasi-)Isomorphism}, $F$ forms an $\infty$-(quasi-)isomorphism). We address this question by inducing a descending, bounded above and complete filtration compatible with the $\mmS L_\infty$-algebra structure on the deformation complex $\mDef(A,B)$ such that elements of higher filtration degree have a vanishing $U(P^\antishriek)(1) \otimes U(A) \to U(B)$ part. We then twist $\mDef(A,B)$ by the graded vector space morphism $\tilde{f}_1 : \mathbb{F}_{U(P^\antishriek)}^c (U(A)) \to U(B)$ that everywhere is zero except on $U(P^\antishriek)(1) \otimes U(A)$, where it is $f_1$. Finding a Maurer-Cartan element $\alpha$ on the twisted deformation complex (for example by cleverly applying Theorem \ref{Main Theorem}) is equivalent to $\tilde{f}_1 + \alpha$ being an $\infty$-morphism. Moreover, if the Maurer-Cartan element $\alpha$ carries a higher degree of filtration it does not alter the $U(P^\antishriek) (1) \otimes U(A)\to U(B)$ part from being $f_1$ and hence yields that $\tilde{f}_1 + \alpha$ is an $\infty$-(quasi-)isomorphism.\\
		When discussing intrinsic formality we will follow this very rationale, start with the identity map as the initial dgVect quasi-isomorphism and search for Maurer-Cartan elements on the twisted deformation complex by e.g. applying Theorem \ref{MainThm}, as these yield the $\infty$-quasi-isomorphism we looking for.\\
		For the remainder of this paper $P$ is assumed to be a possibly inhomogeneous Koszul operad, sometimes (but always mentioned) with restriction on its generating set. Furthermore, we continue using $U$ to denote the forgetful functor $U:dgVect \to grVect$.
		\subsection{Koszul Operads}
		\begin{definition}[Weight Grading on $P^\antishriek$, $\mathbb{F}_{U(P^\antishriek)}^c (U(A))$ and $\mHom_{\mathrm{grVect}} (\mathbb{F}_{U(P^\antishriek)}^c (U(A)),U(B))$]
			\label{Weight Grading}
			Let $P$ be a possibly inhomogeneous Koszul operad. By definition (\cite{TonksBV}, Sect.A.2.) the Koszul dual co-operad is a sub-cooperad of the co-free co-operad $\mathcal{J}^c(sE)$ (with $s$ denoting the suspension operator) for some $\mathbb{S}$-module $E$, which we call generating set. Recalling  from \cite{FreBuch}, Vol.\rnum{1}, Sect.C.1., the definition of the co-free co-operad $\mathcal{J}^c(sE)$ in terms of trees with vertices decorated by $sE$ we may introduce a weight grading on $P^\antishriek$ by the number of vertices (cf. \cite{TonksBV}, Section C.1.) and write ${P^\antishriek}^{(k)}$ for the weight $k$-part of $P^\antishriek$ with respect to this weight grading.\\
			Let $A$ be a $P_\infty$-algebra. We say $v \in P^\antishriek (m) \otimes_{\mathbb{S}_m} A^{\otimes m} =  \prod_{l \geq 0} {P^\antishriek}^{(l)} (m) \otimes_{\mathbb{S}_m} A^{\otimes m}$ has weight $k$ if all the ${P^\antishriek}^{(l)} (m)\otimes_{\mathbb{S}_m} A^{\otimes m}$ are zero except for $l=k$.\\
			Moreover, $w \in \mathbb{F}_{P^\antishriek}^c (A)  = \bigoplus_{n\geq 1} P^\antishriek (n) \otimes_{\mathbb{S}_n} A^{\otimes n}$ is said to be of weight $k$ if all the $v_i \in P^\antishriek (i) \otimes_{\mathbb{S}_i} A^{\otimes i}$ in $w= v_1 + v_2 + \ldots$ are of weight $k$. Both definitions do also hold for their $grVect$ counterparts\\
			Let $A,B$ also be two $P_\infty$-algebras. We introduce a weight grading on \\$\mHom_{grVect} ( \mathbb{F}_{P^\antishriek}^c (U(A)),U(B))$ by setting the weight $k$-part to be the graded vector space homomorphisms that to vanish on all weights of $\mathbb{F}_{U(P^\antishriek)}^c (U(A))$ except possibly for weight $k$.
		\end{definition}
		\subsection{$OpA$-Category}
		Next, let us present the category  $OpA$ consisting of a tuple of an operad $P$ and a $P$-algebra.
		\begin{definition}[$OpA$-category]
			\label{OpA-Cat}
			Let $M$ denote a symmetric monoidal category (e.g. $M=dgVect,grVect$) and let $P$ be an operad in $M$. Furthermore, let $A$ be $P$-algebra.\\
			We establish the category $OpA$ by defining
			\begin{align}
				\label{W118A1}
				Obj(OpA):= (P,A)
			\end{align}
			and setting the morphism to be
			\begin{align}
				\label{W118A2}
				\mathrm{Mor}_{OpA}((P,A),(Q,B))=(f,g),
			\end{align}
			where $f: P \to Q$ is an operad-morphism and $g:A \to f^* B$ is a $P$-algebra morphism.\\
			Recall, that the $P$-algebra $f^* B$ is $B$ with the initial $Q$-algebra structure $\gamma_B : Q \circ B \to B$ made into a $P$-algebra structure $\gamma: P \circ B \to B$ according to
			\begin{align*}
				\gamma: P \circ B \stackrel{f \circ id_B}{\to} Q \circ B \stackrel{\gamma_B}{\to}B.
			\end{align*}
			The composition of morphism works component-wise.
		\end{definition}
		\subsection{$P_{(\infty)}$-algebra structures on co-homologies}
		\begin{lemma}[$H(P)$-algebra structure on $H(A)$]
			\label{H(P)-algebra}
			From \cite{FreBuch}, Vol.\rnum{1}, Prop.3.1.1 it is immediate that every lax monoidal functor also forms a functor of $OpA$ categories.	According to the K\"unneth formula (see \cite{MLHomology}, Chap.8 and \cite{Spanier}, Sect.5.3), the co-homology forms a strong monoidal functor (and hence especially a lax monoidal functor) and so in e.g. $M= dgVect$, the co-homology forms a functor
			\begin{align*}
				H: OpA_{dgVect} &\to OpA_{grVect}\\
				(P,A) &\mapsto (H(P),H(A)).
			\end{align*}
			In particular, the co-homology $H(A)$ of a $P$-algebra $A$ forms a $H(P)$-algebra.
		\end{lemma}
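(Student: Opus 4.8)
The plan is to deduce the statement from the two ingredients quoted in it: (i) every lax symmetric monoidal functor induces a functor on the associated $OpA$-categories, and (ii) cohomology over the ground field $\mathbb{K}$ is strong (hence lax) symmetric monoidal.

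First I would recall, following \cite{FreBuch}, Vol.\rnum{1}, Prop.3.1.1, the mechanism behind (i). Let $F:M\to N$ be a lax symmetric monoidal functor, with structure transformations $\lambda_{X,Y}:F(X)\otimes F(Y)\to F(X\otimes Y)$ and $\lambda_0:\mathbf{1}_N\to F(\mathbf{1}_M)$. Given an operad $P$ in $M$, one transports the partial composition maps $\circ_i:P(m)\otimes P(n)\to P(m+n-1)$ along the iterated $\lambda$'s to obtain maps $F(P)(m)\otimes F(P)(n)\to F(P)(m+n-1)$, and the unit $\mathbf{1}_N\to F(\mathbf{1}_M)\to F(P(1))$; the coherence axioms for $\lambda$ together with functoriality of $F$ turn the operad axioms of $P$ (associativity, unitality, and $\mathbb{S}_n$-equivariance, using that $F$ preserves the symmetry isomorphisms) into those of $F(P)$. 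The same transport applied to the structure map $\gamma:\bigoplus_n P(n)\otimes_{\mathbb{S}_n}A^{\otimes n}\to A$ of a $P$-algebra $A$ produces an $F(P)$-algebra structure on $F(A)$, and a morphism $(f,g)$ of $OpA_M$ (as in Definition \ref{OpA-Cat}) is sent to $(F(f),F(g))$; compatibility of $\lambda$ with $f$ and $g$ makes this a morphism of $OpA_N$, and functoriality is immediate. Thus $F$ induces a functor $OpA_M\to OpA_N$.

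Next I would invoke (ii): working over the field $\mathbb{K}$, the Künneth theorem (\cite{MLHomology}, Chap.8; \cite{Spanier}, Sect.5.3) gives a natural isomorphism $H(C\otimes D)\xrightarrow{\cong}H(C)\otimes H(D)$ of graded vector spaces for $C,D\in dgVect$, and its inverse endows $H:dgVect\to grVect$ with the structure of a strong --- in particular lax --- symmetric monoidal functor; moreover this isomorphism is $\mathbb{S}_n$-equivariant on $n$-fold tensor powers by naturality. Combining (i) and (ii) yields the functor $H:OpA_{dgVect}\to OpA_{grVect}$ sending $(P,A)$ to $(H(P),H(A))$ and $(f,g)$ to $(H(f),H(g))$. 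Reading off the object level, $H(A)$ carries the $H(P)$-algebra structure
\[
H(P)(n)\otimes_{\mathbb{S}_n}H(A)^{\otimes n}\;\xrightarrow{\ \cong\ }\;H\big(P(n)\otimes_{\mathbb{S}_n}A^{\otimes n}\big)\;\xrightarrow{\ H(\gamma_n)\ }\;H(A),
\]
which is the asserted conclusion.

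The only genuinely content-bearing step, and hence the main (though mild) obstacle, is the verification that the Künneth isomorphism is symmetric monoidal rather than merely monoidal --- that is, its compatibility with the symmetry constraints and with the $\mathbb{S}_n$-actions on operad components, which with the $grVect$ sign convention amounts to Koszul-sign bookkeeping. Everything else is the purely formal transport of structure already packaged in \cite{FreBuch}, Vol.\rnum{1}, Prop.3.1.1, so once that compatibility is in hand no operad- or algebra-axiom has to be checked by hand.
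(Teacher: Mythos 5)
Your proposal is correct and follows the same route the paper takes: the paper's entire argument is exactly the combination of \cite{FreBuch}, Vol.1, Prop.3.1.1 (lax monoidal functors induce functors of $OpA$-categories) with the K\"unneth theorem making $H$ strong, hence lax, symmetric monoidal, and you have merely spelled out the transport-of-structure mechanism and the sign/equivariance bookkeeping that the paper delegates to the references. No gap; the extra detail you supply (naturality and $\mathbb{S}_n$-equivariance of the K\"unneth isomorphism) is precisely what the cited sources provide.
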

		\begin{definition}[$P_\infty$-algebra structures on $H(A)$]
			\label{Algebra Strukturen auf auf H(A)}
			Let $P$ be a possibly inhomogeneous Koszul operad and $A$ be a $P_\infty$-algebra. Recall the notion of an operad being possibly inhomogeneous Koszul (cf. \cite{TonksBV}, Sect.A.3), which among others yields the cobar construction of the Koszul dual co-operad to provide a quasi-free resolution of $P$
			\begin{align}
				\label{W114A2}
				\Omega P^\antishriek \substack{\simeq \\ \mathrm{dgOp~quasi-iso}} \hspace{0.2em} P.
			\end{align}
			But because our definition of Koszul operads does not allow for operads with non-trivial differentials, Equation \eqref{W114A2} also shows
			\begin{align}
				\label{W114A3}
				H( \Omega P^\antishriek) \substack{\cong \\ \mathrm{dgOp Iso}} \hspace{0.1em} P.
			\end{align}
			From Lemma \ref{H(P)-algebra} applied to $\Omega P^\antishriek$ we know that $H(A)$ is endowed with a $H(\Omega P^\antishriek)$-algebra structure. Subsequently, the composition of the structure map $H(\Omega P^\antishriek) \to \mathrm{End}_{H(A)}$ with the morphism of Equation \eqref{W114A3} yields a $P$-algebra structure on $H(A)$.\\
			Furthermore, using the fact every $P$-algebra $B$ also is a $P_\infty$-algebra simply by composition of the structure map with the projection
			\begin{align*}
				P_\infty = \Omega P^\antishriek \stackrel{\sim}{\to} P \to \mathrm{End}_B,
			\end{align*}
			we can also speak of a $P_\infty$-algebra structure on $H(A)$.\\
			Apart from the just found $P$-algebra (and $P_\infty$-algebra structure, respectively) on $H(A)$, there is yet another construction for a $P_\infty$-algebra structure on $H(A)$.\\
			More precisely, by virtue of the homotopy transfer theorem (see \cite{TonksBV}, Lemma 48), we can endow the space $H(A)$ also with a proper $P_\infty$-algebra structure, which we denote by $H(A)_{\mHTT}$ to avoid confusion with the structures from above (even though the underlying graded vector spaces are the same). Among others, $H(A)_{\mHTT}$ has the special property that it extends the $P$-algebra structure from above, that is to say the corresponding $\mathbb{S}$-module morphism $\rho_{H(A)_\mHTT}: P^\antishriek\to \mathrm{End}_{H(B)_{\mHTT}}$ coincides with $\rho_{H(A)}$ in ${P^\antishriek}^{(1)}$.\\
			Moreover, the homotopy transfer theorem allows us to extend the inclusion and projection, respectively, to $\infty$-quasi-isomorphisms (see Definition \ref{Infinity (Quasi-)Isomorphism}) $\iota: H(A)_{\mHTT} \rightsquigarrow A$ and $p:A \rightsquigarrow H(A)_{\mHTT}$.
		\end{definition}
		\subsection{$\infty$-(quasi-)isomorphisms and formality}
		\begin{definition}[$\infty$-(quasi-)isomorphism]
			\label{Infinity (Quasi-)Isomorphism}
				An $\infty$-algebra morphism $f: A \rightsquigarrow B$ between two $P_\infty$-algebras $A,B$ for a possibly inhomogeneous Koszul operad $P$ is by definition a co-algebra morphism between the co-free $U(P^\antishriek)$-co-algebras $F:\mathbb{F}_{U(P^\antishriek)}^c (U(A)) \to \mathbb{F}_{U(P^\antishriek)}^c (U(B))$ which in addition is compatible with the respective co-differentials emerging from the relative bar-constructions (see Equations \eqref{Relativ Bar tot}-\eqref{Relative Bar 2}). 
				Due to co-freeness such a map $f$ is fully described by a graded vector space morphism $\tilde{f}:\mathbb{F}_{U(P^\antishriek)}^c (U(A)) \to U(B)$ and hence 
				using the internal hom adjunction (the underlying symmetric monoidal category is closed) $\mMor_{\mathbb{S}_n\mathrm{Mod.}} (U(P^\antishriek) (n) \otimes_{\mathbb{S}_n} U(A)^{\otimes n},U(B)) \substack{\cong \\\mathrm{Set}} \mMor(U(P^\antishriek) (n)$, $\mHom_{\mathbb{S}_n\mathrm{Mod.}} (U(A)^{\otimes n},U(B)))$ can equivalently be fully characterised as an $\mathbb{S}$-module morphism $\xi \in \mMor_{\mathbb{S}\mathrm{Mod.}} (U(P^\antishriek), \mathrm{End}_{U(B)}^{U(A)})$, where $\mathrm{End}_{U(B)}^{U(A)} = \{\mHom (U(A)^{\otimes n}, U(B)) \}_{n \in \mathbb{Z}_{\geq 1}}$.\\
				For an $\infty$-morphism  the image of the weight 0-part under $\xi$, that is $\xi (P^{\antishriek^{(0)}}):A \to B$, is a chain map.\\
				So it makes sense for an $\infty$-morphism $f: A \rightsquigarrow B$ to be called $\infty$-isomorphism\index{$\infty$-isomorphism} if $\xi (P^{\antishriek^{(0)}})$ is an isomorphism of dg vector spaces.\\
				Similarly, we say an $\infty$-morphism $f: A \rightsquigarrow B$ is an $\infty$-quasi-isomorphism\index{$\infty$-quasi-isomorphism} if $\xi (P^{\antishriek^{(0)}})$ is a quasi-isomorphism of dg vector spaces, that is, it induces an isomorphism in the respective co-homologies.\\
				$\infty$-quasi-isomorphisms  of $P_\infty$-algebras are of great relevance as they do form the weak equivalences in a model category structure\footnote{To be precise it is merely an almost model category structure as even though it admits finite products and pullback of fibrations, in general the category of $P_\infty$-algebras with $\infty$-morphisms fails to admit finite limits and co-limits (see \cite{ValHomotopy}, Section 4.1 and \cite{Loday}, Sect.B.6.3).} on $P_\infty$-algebras with $\infty$-morphisms (cf. \cite{ValHomotopy}, Sect.4.1).
			\end{definition}
			\subsection{(Intrinsic) Formality}
			\begin{definition}[Formality of $P_\infty$-algebras]
				\label{Formality}
				Let $A$ be a $P_\infty$-algebra for some possible inhomogeneous Koszul operad $P$. We say that $A$ is formal\index{Formal} if there exists an $\infty$-quasi-isomorphism 
				\begin{align*}
					H(A) \stackrel{\sim}{\rightsquigarrow} A.
				\end{align*}
			\end{definition}
			Besides the notion of formality, there is also the stronger concept of intrinsic formality\index{Intrinsic Formality}, meaning that a $P_\infty$-algebra is called intrinsically formal if every $P_\infty$-algebra $B$ with co-homology $H(B)$ (in the sense of Definition \ref{Algebra Strukturen auf auf H(A)}) that as a $P$-algebra is isomorphic to $H(A)$, is formal itself.
			\begin{definition}[Intrinsic Formality of $P_\infty$-algebras]
				\label{Def Intrinsic Formality}
				Let $A$ be a $P_\infty$-algebra for $P$ a possibly inhomogeneous Koszul operad. We say $A$ is intrinsically formal, if for every $P_\infty$-algebra $B$ there is the implication\footnote{The implication can equivalently (using the fact that isomorphic $P$-algebras are in particular also $\infty$-isomorphic and hence $\infty$-quasi-isomorphic as well) be formulated as 
					\begin{align*}
						H(B) ~ \substack{\cong \\ P\mathrm{-alg.-iso.}} ~ H(A) ~\implies ~ B ~\substack{\simeq \\ \infty\mathrm{-quasi-iso.}} ~ H(A).
					\end{align*}
					Moreover, $H(B) \substack{\cong \\ P\mathrm{-alg.-iso.}} ~ H(A) ~\implies ~ B \substack{\simeq \\ \infty\mathrm{-quasi-iso.}} ~A$ is another equivalent definition of intrinsic formality, justifying to speak of intrinsic formality being a property of $A$.}
				\begin{align}
					\label{Intrinsic Formality Implication}
					H(B) ~ \substack{\cong \\ P\mathrm{-alg.-iso.}} ~ H(A) ~\implies ~ B ~\substack{\simeq \\ \infty\mathrm{-quasi-iso.}} ~ H(B).
				\end{align}
			\end{definition}
			Before introducing the deformation complex, let us first further elaborate on the notion of derivations of $P$-algebras (and its dual counterparts, respectively), which we have introduced at the end of Section \ref{preliminaries}.
			\subsection{Derivations of $P$-algebras revisited}
			\label{Revisited}
			We start, by using extension of scalars (cf. \cite{Markl}, Section 3.10), to provide an alternative but equivalent definition of derivations of $P$-algebras as $U(P)\otimes R$-algebra morphisms for some graded ring $R$. Furthermore, we extend this definition to derivations with respect to $U(P)$-algebra morphisms. By describing derivations as $U(P)\otimes R$-algebra morphisms we can directly utilise freeness when analysing derivations of free $U(P)$-algebras. Eventually, we translate the notion of a differential of $U(P)$-algebra and a  derivation w.r.t. a $U(P)$-algebra morphism to the $OpA$ language.		
			\begin{definition}[Derivation with respect to $U(P)$-algebra morphism, equivalent description of $P$-algebra derivations]
				\label{Derivation wrt Morphism}
				Let $P$ be an operad, $A,B$ two $P$-algebras and $f: U(A) \to U(B)$ a graded $U(P)$-algebra morphism.\\
				We say a linear map $\xi: U(A) \to U(B)$ is a derivation of degree k with respect to $f$\index{Derivation with respect to Co-Algebra-Morphism}, if
				\begin{align}
					(f\otimes Id_R+\xi \otimes \epsilon): U(A) \otimes R \to U(B) \otimes R 
				\end{align}
				forms a $U(P) \otimes R$-algebra morphism for $R$ the ring $R:= \mathbb{K}[\epsilon]/\epsilon^2$ with $\epsilon$ a formal variable of degree $-k$.\\
				That is to say, the following diagram commutes
				\begin{equation}
					\label{Relative Derivation 1}
					\begin{gathered}
					\tikzstyle{mynode}= [circle,draw=black!50,]
					\begin{tikzpicture}	
						\node [black] (v1) at (-4,2) {$(U(P)\otimes R) (U(A) \otimes R) $};
						\node [black] (v2) at (3,2) {$(U(P)\otimes R) (U(B)\otimes R) $};
						\node [black] (v3) at (-4,0) {$U(A)\otimes R$};
						\node [black] (v4) at (3,0) {$U(B)\otimes R$};
						\node[black,scale=2] (commutative) at (-0.5,1) {$\circlearrowright$};
						\draw[->] (v1) edge node[midway, above, yshift=0.5em, scale=0.75] {$(U(P) \otimes R) (f \otimes Id_R + \xi \otimes \epsilon)$} (v2)  ;
						\draw [->] (v1) edge node [midway,right] {$\gamma$} (v3);
						\draw [->] (v3) edge node[midway, above] {$f \otimes Id_R + \xi \otimes \epsilon$} (v4);
						\draw [->] (v2) edge node[midway, right] {$\gamma$} (v4);
					\end{tikzpicture}.
					\end{gathered}
				\end{equation}\hfill\\
				Recall, that $(U(P) \otimes R) (f \otimes Id_R + \xi \otimes \epsilon)$ operates on an element in $(U(P)\otimes R) (U(A) \otimes R)$ by leaving the $U(P) \otimes R$-part untouched but simultaneously acting with $f \otimes Id_R + \xi \otimes \epsilon$ on all the $U(A) \otimes R$ decorations. Due to the definition of $R$, only the terms where one or none $\xi \otimes \epsilon$ operation occurs, do survive. From $f$ being a graded $U(P)$-algebra morphism it is clear, that 
				\begin{equation}
					\label{Relative Derivation 2}
					\begin{gathered}
					\tikzstyle{mynode}= [circle,draw=black!50,]
					\begin{tikzpicture}	
						\node [black] (v1) at (-4,2) {$(U(P)\otimes R) (U(A) \otimes R) $};
						\node [black] (v2) at (3,2) {$(U(P)\otimes R) (U(B)\otimes R) $};
						\node [black] (v3) at (-4,0) {$U(A)\otimes R$};
						\node [black] (v4) at (3,0) {$U(B)\otimes R$};
						\node[black,scale=2] (commutative) at (-0.5,1) {$\circlearrowright$};
						\draw[->] (v1) edge node[midway, above, yshift=0.5em, scale=0.75] {$(U(P) \otimes R) (f \otimes Id_R )$} (v2)  ;
						\draw [->] (v1) edge node [midway,right] {$\gamma$} (v3);
						\draw [->] (v3) edge node[midway, above] {$f \otimes Id_R $} (v4);
						\draw [->] (v2) edge node[midway, right] {$\gamma$} (v4);
					\end{tikzpicture}
					\end{gathered}
				\end{equation}\hfill\\
				commutes and hence commutativity of the diagram of Equation \eqref{Relative Derivation 1} is tantamount to commutativity of
				\begin{equation}
					\label{Relative Derivation 3}
					\begin{gathered}
					\tikzstyle{mynode}= [circle,draw=black!50,]
					\begin{tikzpicture}	
						\node [black] (v1) at (-4,2) {($(U(P)\otimes R) (U(A) \otimes R) $};
						\node [black] (v2) at (3,2) {$(U(P) \otimes R) (U(B)\otimes R) $};
						\node [black] (v3) at (-4,0) {$U(A)\otimes R$};
						\node [black] (v4) at (3,0) {$U(B)\otimes R$};
						\node[black,scale=2] (commutative) at (-0.5,1) {$\circlearrowright$};
						\draw[->] (v1) edge node[midway, above,yshift=0.2em, scale=0.75] {$\sum_j Id_{U(P)} \otimes ((f \otimes Id_R) \otimes \ldots \otimes (f \otimes Id_R) \otimes \hspace{-0.5em}  \ubr{(\xi \otimes \epsilon)}_{\text{j th position}} \hspace{-0,5em} \otimes (f \otimes Id_R) \otimes\ldots  \otimes (f \otimes \epsilon))$} (v2)  ;
						\draw [->] (v1) edge node [midway,right] {$\gamma$} (v3);
						\draw [->] (v3) edge node[midway, above] {$ \xi \otimes \epsilon $} (v4);
						\draw [->] (v2) edge node[midway, right] {$\gamma$} (v4);
					\end{tikzpicture}.
					\end{gathered}
				\end{equation}
			\end{definition}
			When comparing the diagrams of Equation \eqref{Relative Derivation 3} and Equation \eqref{Derivation on A}, one realises that the latter is just the special case of $A=B$ and $f$ being the identity map.\\
			Hence, it makes sense to consider the definition of a derivation with respect to an algebra morphism as a generalisation of the `classical' definition of $P$-derivations from Section \ref{preliminaries} and under this new notion `classical' $P$-derivations correspond to derivations w.r.t. the identity morphism.\\
			One of the main advantages of this generalised definition is that $\mHom_{\mathrm{grVect}} (A,\mathbb{F}_{U(P)} (A)) \cong \mathrm{Der}(\mathbb{F}_{U(P)}(A))$ follows directly from freeness applied to the corresponding $U(P)\otimes R$-algebra morphism.
			\begin{lemma}[Derivations w.r.t. $U(P)$-algebra morphisms and differentials in the $OpA$ language]
				\label{Derivations in OpA Language}
				Let $P$ be an operad and let $A$ be a $P$-algebra. Consider the space
				\begin{align}
					\label{W118A3}
					\begin{aligned}
						\mathrm{Der}(P,A) := \{\xi=(\xi_1,\xi_2): \xi_1 \in \mHom_{grVect}(P,P), \xi_2 \in \mHom_{grVect} (A,A),\\
						Id+\epsilon \xi \in \mathrm{Mor}_{OpA} ((P,A) \otimes R, (P,A) \otimes R) \},
					\end{aligned}
				\end{align}
				where $R = \mathbb{K}[\epsilon]/\epsilon^2$ with $\epsilon$ a formal variable of degree 0.\\
				We have the following equivalent characterisation of $P$-derivation and differential of a $P$-algebra $A$ in terms of elements of $\mathrm{Der}(P,A)$.
				\begin{itemize}
					\item A graded linear map $\beta: A \to A$ describes a $P$-derivation of $A$ if
					\begin{align}
						\label{W118A4}
						(0, \beta) \in \mathrm{Der}(U(P),U(A)).
					\end{align}
					\item A graded linear map $\beta: A \to A$ of degree 1, subject to $\beta ^2 =0$ forms a differential on $U(A)$ if
					\begin{align}
						\label{W118A5}
						(d_P, \beta) \in \mathrm{Der}(U(P),U(A))^{(1)}
					\end{align}
				\end{itemize}
				Furthermore, also derivations w.r.t. a $U(P)$-algebra morphism can be characterised in this manner. More precisely, let $f: A \to B$ be a $U(P)$-algebra morphism. Then, a graded linear map $\xi: A \to B$ forms a derivation w.r.t. $f$, if
				\begin{align}
					\label{W118Z1}
					(Id_P,f)+ \epsilon (0,\xi) \in \mathrm{Mor}_{OpA} ((U(P),U(A)) \otimes R, (U(P),U(B)) \otimes R)
				\end{align}
			\end{lemma}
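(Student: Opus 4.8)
The plan is to prove all three characterizations by a single uniform mechanism: unwind membership in $\mathrm{Der}(U(P),U(A))$ — respectively in $\mathrm{Mor}_{OpA}$ — into its two constituent conditions, namely that the first component is an operad morphism and the second component is an algebra morphism over the first, and then expand each of these modulo $\epsilon^2=0$. The first-order-in-$\epsilon$ part of the algebra-morphism condition will in each case reproduce verbatim one of the commutative squares already recorded in Section \ref{preliminaries} and Definition \ref{Derivation wrt Morphism}, so that the equivalences become essentially definitional once the base change is set up correctly.

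First I would record the elementary behaviour of extension of scalars along $R=\mathbb{K}[\epsilon]/\epsilon^2$: the free-algebra functor, the structure maps $\gamma$ and the $\mathbb{S}_n$-coinvariants $\otimes_{\mathbb{S}_n}$ are all obtained by base change and commute with it, and in any expression of the shape $\bigotimes_i(x_i\otimes 1+y_i\otimes\epsilon)$ only the summand with no $\epsilon$ and the summands with exactly one $\epsilon$ survive. Consequently, for a pair $(\phi,\psi)=(Id_{U(P)}+\epsilon\,\phi_1,\ Id+\epsilon\,\psi_1)$ the condition $(\phi,\psi)\in\mathrm{Mor}_{OpA}((U(P),A)\otimes R,(U(P),A)\otimes R)$ splits as: (i) $\phi$ is an operad morphism, which upon expansion is exactly the statement that $\phi_1$ is an operad derivation of $U(P)$; and (ii) $\psi\colon A\otimes R\to\phi^*(A\otimes R)$ is a morphism of $(U(P)\otimes R)$-algebras, whose $\epsilon^0$-component holds automatically and whose $\epsilon^1$-component is a single commuting square built from $\gamma$, $\psi_1$ and (via $\phi^*$) $\phi_1$.

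Now I would run this for each bullet. For the first, put $\phi_1=0$ and $\psi_1=\beta$: condition (i) is vacuous, and the $\epsilon^1$-part of (ii) reads $\beta\circ\gamma=\gamma\circ(Id_P\circ'\beta)$, which is precisely diagram \eqref{Derivation on A}; hence $(0,\beta)\in\mathrm{Der}(U(P),U(A))$ if and only if $\beta$ is a $P$-derivation. For the second, put $\phi_1=d_P$ and $\psi_1=\beta$ (working with $\epsilon$ in degree $-1$ so that $Id+\epsilon(d_P,\beta)$ is degree preserving, which is what the superscript $(1)$ signifies); condition (i) now says $d_P$ satisfies the operadic Leibniz rule, which holds because $P$ is a dg operad, and unwinding $\phi^*=(Id_{U(P)}+d_P\otimes\epsilon)^*$ turns the $\epsilon^1$-part of (ii) into $\beta\circ\gamma=\gamma\circ(d_P\circ Id_A+Id_P\circ'\beta)$, i.e.\ diagram \eqref{Differential on A}; together with the standing hypothesis $\beta^2=0$ this is exactly Definition \ref{Differential on U(P)}. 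For the last statement, $(Id_P,f)+\epsilon(0,\xi)$ is the pair $(Id_{U(P)\otimes R},\ f\otimes Id_R+\xi\otimes\epsilon)$: the first component is trivially an operad morphism, and the requirement that the second be a $(U(P)\otimes R)$-algebra morphism is literally the defining diagram \eqref{Relative Derivation 1} (equivalently \eqref{Relative Derivation 3}) of a derivation with respect to $f$, so the two notions coincide.

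I expect the only delicate step to be the bookkeeping in (ii) for the second bullet: one must check that pulling back the $(U(P)\otimes R)$-algebra structure along $\phi=Id_{U(P)}+d_P\otimes\epsilon$ twists $\gamma$ in precisely the way that produces the extra summand $d_P\circ Id_A$ acting on the operation slot in \eqref{Differential on A}, while the $\mathbb{S}_n$-coinvariants and the position-summed operator $\circ'$ are simply carried along by the base change. Everything else is a direct, if notation-heavy, diagram chase, and the first and third bullets require essentially no computation beyond matching symbols.
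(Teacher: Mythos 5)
Your proposal is correct and follows essentially the same route as the paper's proof: unwind the $OpA$-morphism condition, expand modulo $\epsilon^2$, note the $\epsilon^0$-part is trivial, and identify the $\epsilon$-linear part with the diagrams \eqref{Derivation on A}, \eqref{Differential on A} and \eqref{Relative Derivation 1}, with the only delicate point (the pullback along $Id_{U(P)}+\epsilon d_P$ producing the $d_P\circ Id_A$ summand) handled just as in the paper. The only cosmetic difference is ordering: the paper treats the relative statement \eqref{W118Z1} first and obtains the first bullet as the special case $f=Id_A$, whereas you verify each bullet directly.
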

			\begin{proof}[Proof of Lemma \ref{Derivations in OpA Language}]
				For $(Id_P, f)+ \epsilon (0, \xi)$ to form an $OpA$ morphism the following diagram has to commute 
				\begin{equation}
					\label{$P$-derivation w.r.t. $U(P)$-algebra morphism in $OpA$-language}
					\begin{gathered}
					\tikzstyle{mynode}= [circle,draw=black!50,]
					\begin{tikzpicture}	
						\node [black] (v1) at (-3,2) {($(U(P)\otimes R) (U(A) \otimes R) $};
						\node [black] (v2) at (3,2) {$(U(P) \otimes R) (Id_{U(P)} \otimes Id_R)^* (U(B)\otimes R) $};
						\node [black] (v3) at (-3,0) {$U(A)\otimes R$};
						\node [black] (v4) at (3,0) {$U(B)\otimes R$};
						\node[black,scale=2] (commutative) at (-0.25,1) {$\circlearrowright$};
						\draw[->] (v1) edge node[midway, above,yshift=0.5em, scale=0.75] {$(Id_{U(P)} \otimes Id_R)\circ (f+ \epsilon \xi)$} (v2)  ;
						\draw [->] (v1) edge node [midway,right] {$\gamma_{A}$} (v3);
						\draw [->] (v3) edge node[midway, above] {$ f + \epsilon \xi$} (v4);
						\draw [->] (v2) edge node[midway, right] {$\gamma_{B}$} (v4);
					\end{tikzpicture}.
					\end{gathered}
				\end{equation}
				But this is the very same diagram as in Equation \eqref{Relative Derivation 1}, proving equivalency of the two definitions. Moreover, we notice that Equation \eqref{W118A4} is just the special case $f=Id_A$ of Equation \eqref{W118Z1}. But, as discussed in Definition \ref{Derivation wrt Morphism}, a derivation w.r.t. identity morphism is nothing else than a `classical $P$-derivation'.\\
				It remains to show that Equation \eqref{W118A5} coincides with the definition of differential of a $U(P)$-algebra from Definition \ref{Differential on U(P)}.\\
				$(d_P, \beta) \in \mathrm{Der}(U(P),U(A))^{(1)}$ means we have the following commutative diagram
				\begin{equation}
					\label{W118Z2}
					\begin{gathered}
					\tikzstyle{mynode}= [circle,draw=black!50,]
					\begin{tikzpicture}	
						\node [black] (v1) at (-3.5,2) [scale=0.8]{($(U(P)\otimes R) (U(A) \otimes R) $};
						\node [black] (v2) at (3,2) [scale=0.8]{$(U(P) \otimes R) (Id_{U(P)\otimes R}+ \epsilon d_P \otimes Id_R)^* (U(A)\otimes R) $};
						\node [black] (v3) at (-3.5,0) {$U(A)\otimes R$};
						\node [black] (v4) at (3,0) {$U(A)\otimes R$};
						\node[black,scale=2] (commutative) at (-0.25,1) {$\circlearrowright$};
						\draw[->] (v1) edge node[midway, above,yshift=0.5em, scale=0.75] {$(Id_{U(P)} \otimes Id_R)\circ (Id+ \epsilon \beta)$} (v2)  ;
						\draw [->] (v1) edge node [midway,right] {$\gamma_{A }$} (v3);
						\draw [->] (v3) edge node[midway, above] {$Id_A + \epsilon \beta$} (v4);
						\draw [->] (v2) edge node[midway, right] {$\gamma_{A} $} (v4);
					\end{tikzpicture}.
					\end{gathered}
				\end{equation}
				Commutativity of the non-$\epsilon$-part is equivalent to the identity map forming a $U(P)$-algebra morphism and hence is trivially satisfied.\\
				When examining commutativity of the maps containing $\epsilon$, we can restrict to those with exactly one $\epsilon$ because of $\epsilon^2=0$, i.e. we find
				\begin{equation}
					\label{W118Z3}
					\begin{gathered}
					\tikzstyle{mynode}= [circle,draw=black!50,]
					\begin{tikzpicture}	
						\node [black] (v1) at (-4,2) {($(U(P)\otimes R) (U(A) \otimes R) $};
						\node [black] (v2) at (3,2) {$(U(P) \otimes R)  (U(A)\otimes R) $};
						\node [black] (v3) at (-4,0) {$U(A)\otimes R$};
						\node [black] (v4) at (3,0) {$U(A)\otimes R$};
						\node[black,scale=2] (commutative) at (-0.5,1) {$\circlearrowright$};
						\draw[->] (v1) edge node[midway, above,yshift=0.2em, scale=0.75] {$\epsilon d_P \circ Id_{U(A) \otimes R} + Id_{U(P)\otimes R} \circ^\prime (\epsilon \beta)$} (v2)  ;
						\draw [->] (v1) edge node [midway,right] {$\gamma_{A }$} (v3);
						\draw [->] (v3) edge node[midway, above] {$\epsilon \beta$} (v4);
						\draw [->] (v2) edge node[midway, right] {$\gamma_{A} $} (v4);
					\end{tikzpicture},
					\end{gathered}
				\end{equation}
				where the $(\epsilon d_P \circ Id_{A \otimes R})$ term is due to $(d_{P \otimes R} + \epsilon d_P \otimes Id_R)^*$.\\
				But this coincides with the diagram from Equation \eqref{Differential on A} and as such implies equivalency of Equation \eqref{W118A5} and Definition \ref{Differential on U(P)}.
			\end{proof}
			All the claims of Section \ref{Revisited} \emph{mutatis mutandis} also hold for their dual counterparts.\\
			\begin{lemma}[Commutator with co-differential]
				\label{Commutator with Differentials}
				Let $C$ be a co-operad with co-operadic co-differential $d_C$, $A,B$ two $C$-co-algebras with co-differentials $d_A$ and $d_B$, respectively. Moreover, let $f: U(A) \to U(B)$ be a $U(C)$-co-algebra morphism (i.e. compatible with the co-algebraic co-multiplication, but not necessarily with the respective co-differentials).\\
				Then $d_B f - f d_A$ is a co-derivation with respect to $f$.
			\end{lemma}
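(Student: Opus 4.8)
The plan is to transport the statement to the algebra side and then read it off from the $OpA$-description of derivations in Lemma~\ref{Derivations in OpA Language}. By the duality principle recorded at the end of Section~\ref{Revisited} it suffices to prove the dual assertion: for an operad $P$, two $P$-algebras $A,B$ equipped with differentials $d_A,d_B$ on $U(A),U(B)$ in the sense of Definition~\ref{Differential on U(P)}, and a $U(P)$-algebra morphism $f\colon U(A)\to U(B)$, the graded linear map $\xi\meqd d_Bf-fd_A$, which has degree $1$, is a $P$-derivation with respect to $f$. Unwinding \eqref{W118Z1} (equivalently, the commutative square \eqref{Relative Derivation 3}), this is the single identity
\begin{align*}
\xi\big(\gamma_A(\mu;a_1,\dots,a_n)\big)=\sum_{j=1}^{n}\pm\,\gamma_B\big(\mu;f(a_1),\dots,f(a_{j-1}),\xi(a_j),f(a_{j+1}),\dots,f(a_n)\big)
\end{align*}
for all $\mu\in P(n)$ and $a_i\in A$, with the shifted Koszul signs prescribed by \eqref{Relative Derivation 3}.

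I would verify this directly. Since $f$ is a $U(P)$-algebra morphism, $f\gamma_A(\mu;a_1,\dots,a_n)=\gamma_B(\mu;f(a_1),\dots,f(a_n))$; applying $d_B$ and then the defining relation $d_B\gamma_B=\gamma_B\circ\big(d_P\circ\mId_B+\mId_P\circ^\prime d_B\big)$ of a differential (Definition~\ref{Differential on U(P)}) produces $\gamma_B(d_P\mu;f(a_1),\dots,f(a_n))$ together with the slotwise sum $\sum_j\pm\gamma_B(\mu;\dots,d_Bf(a_j),\dots)$. Likewise $fd_A\gamma_A(\mu;a_1,\dots,a_n)$, expanded via the corresponding relation for $d_A$ and then pushing $f$ through $\gamma_A$ once more, equals $\gamma_B(d_P\mu;f(a_1),\dots,f(a_n))$ together with $\sum_j\pm\gamma_B(\mu;\dots,fd_A(a_j),\dots)$. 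Subtracting, the two $\gamma_B(d_P\mu;\dots)$ terms cancel and what is left is exactly $\sum_j\pm\gamma_B(\mu;\dots,(d_Bf-fd_A)(a_j),\dots)$, the required identity. Dualising — co-operadic co-differential $d_C$, $C$-co-algebras, co-derivation with respect to $f$ in place of the above — then yields Lemma~\ref{Commutator with Differentials}.

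Alternatively, one can argue without writing anything out. Since $d_A,d_B$ are differentials, \eqref{W118A5} says that $\mId+\epsilon(d_P,d_A)$ and $\mId+\epsilon(d_P,d_B)$ are $OpA$-endomorphisms of $(U(P),U(A))\otimes R$ and $(U(P),U(B))\otimes R$; pre- and post-composing the $OpA$-morphism $(\mId_P,f)$ with these (composition in $OpA$ is componentwise, Definition~\ref{OpA-Cat}) produces two $OpA$-morphisms
\begin{align*}
m_1&\meqd\big(\mId+\epsilon(d_P,d_B)\big)\circ(\mId_P,f)=\big(\mId_P+\epsilon d_P,\ f+\epsilon\,d_Bf\big),\\
m_2&\meqd(\mId_P,f)\circ\big(\mId+\epsilon(d_P,d_A)\big)=\big(\mId_P+\epsilon d_P,\ f+\epsilon\,fd_A\big)
\end{align*}
with common operad component $\mId_P+\epsilon d_P$ and agreeing modulo $\epsilon$. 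Expanding the $OpA$-morphism condition in powers of $\epsilon$ (recall $\epsilon^2=0$): its $\epsilon^0$-part is, for both, just the statement that $f$ is a $U(P)$-algebra morphism, while its $\epsilon^1$-part is linear in the $\epsilon$-coefficients of the operad and algebra components once the $\epsilon^0$-data is fixed. As that operad coefficient is $d_P$ for both $m_1$ and $m_2$, subtracting the $\epsilon^1$-equations cancels the $d_P$-terms and leaves precisely the $\epsilon^1$-equation attached to $(0,\,d_Bf-fd_A)$, i.e.\ the $OpA$-condition \eqref{W118Z1} for $(\mId_P,f)+\epsilon(0,\xi)$.

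I do not expect a real obstacle here: the content is bookkeeping. The points that need care are keeping the shifted Koszul signs in the $\circ^\prime$-sums consistent between the $d_A$- and $d_B$-computations, matching the degree conventions for $\epsilon$ with those of Lemma~\ref{Derivations in OpA Language}, and spelling out the duals of Definitions~\ref{Differential on U(P)} and~\ref{Derivation wrt Morphism} used in the dualisation step.
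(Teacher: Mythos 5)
Your proposal is correct, and its second (coordinate-free) argument is in substance the paper's own proof: the paper also invokes Lemma \ref{Derivations in OpA Language} to view the differentials as morphisms $(\mathrm{Id}+\epsilon d_C,\mathrm{Id}+\epsilon d_A)$ over $R=\mathbb{K}[\epsilon]/\epsilon^2$, but instead of forming your two composites $m_1,m_2$ and subtracting their $\epsilon^1$-conditions, it composes once, $(\mathrm{Id}+\epsilon d_C,\mathrm{Id}+\epsilon d_A)\circ(\mathrm{Id},f)\circ(\mathrm{Id}-\epsilon d_C,\mathrm{Id}-\epsilon d_B)$, and reads off that the result is literally $(\mathrm{Id},f)+\epsilon(0,\text{commutator})$, so closure of $Op^cA^c$-morphisms under composition finishes the proof without any appeal to linearity of the first-order term. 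The two devices are interchangeable (your subtraction argument is valid because the $\epsilon^1$-part of the morphism condition is linear in the pair consisting of the operadic and algebra $\epsilon$-coefficients once the $\epsilon^0$-data $(\mathrm{Id},f)$ is fixed), and your version even sidesteps the sign/direction bookkeeping that in the paper produces $d_Af-fd_B$ rather than the commutator as stated. One caveat: the paper carries out the whole argument directly on the co-algebra side, using only the dualized ($Op^cA^c$, coDer) forms of the Section \ref{Revisited} statements, which dualize cleanly because the argument is purely arrow-theoretic; your first, element-wise verification on the algebra side is fine as far as it goes, but its closing step ``dualising then yields the lemma'' is not automatic for a pointwise computation (co-algebras are not simply linear duals of algebras here), so if you keep that route you should either redo the computation with comultiplications or note that it is superseded by the diagrammatic argument, which is the one that transports verbatim.
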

			
			\begin{proof}[Proof of Lemma \ref{Commutator with Differentials}]
				Let $R$ denote the Ring $R = \mathbb{K}[\epsilon]/\epsilon^2$ for $\epsilon$ a formal variable of degree 0. By assumption, $d_A$ is a differential of a $U(C)$-co-algebra and as such by means of Equation \eqref{W118A5} $(d_C,d_A) \in \mathrm{coDer}(U(C),U(A))^{(1)}$, which in turn yields $(Id_C+\epsilon d_C, Id_A+ \epsilon d_A) \in \mMor_{Op^c A^c}((U(C),U(A))\otimes R, (U(C),U(A)) \otimes R)$. Analogously, $(Id_C-\epsilon d_C, Id_B - \epsilon d_B)$ forms a $Op^c A^c$ morphism as well. 
				We further notice that because $f$ is a $U(C)$-co-algebra morphism, also $(Id_C,f)$ forms a $Op^c A^c$ morphism.\\
				Consequently, as a composition of $Op^c A^c$ morphisms
				\begin{align}
					\label{W121Z1}
					(Id_c+\epsilon d_C, Id_A + \epsilon d_A) \circ (Id_C,f) \circ (Id_C - \epsilon d_C, Id_B - \epsilon d_B) 
				\end{align}
				describes a $Op^c A^c$ morphism, too.\\
				However, a short calculation yields
				\begin{align}
					\label{W121Z2}
					\begin{aligned}
						&(Id_c+\epsilon d_C, Id_A + \epsilon d_A) \circ (Id_C,f) \circ (Id_C - \epsilon d_C, Id_B - \epsilon d_B) \\
						&= \big((Id_C + \epsilon d_C) \circ (Id_C) \circ (Id_C - \epsilon d_C), (Id_A + \epsilon d_A) \circ (f) \circ (Id_B - \epsilon d_B)\big)\\
						&=\big(Id_C, f+ \epsilon (d_A \circ f - f \circ d_B)\big)\\
						&=(Id_C,f)+ \epsilon (0, d_A f - f d_B).
					\end{aligned}
				\end{align}
				Recalling Equation \eqref{W118Z1} we know that in order for $(d_A f- f d_B)$ to be a co-derivation w.r.t. $f$ the expression $(Id_C, f) + \epsilon (0,d_A f - f d_B)$ has to form a $Op^c A^c$ morphism. Though, as a composition of $Op^c A^c$ morphism, this certainly is the case.
			\end{proof}
			\subsection{Deformation Complex}
			Deformation complex is an often used expression to denote an $L_\infty$-algebra whose Maurer-Cartan elements form some structures or denote morphisms of a certain kind (e.g. see \cite{Loday}, Sect.12.2 and \cite{What}, Sect.2.1. where the authors call it convolution $\mmS L_\infty$-algebra).\\
			For the scope of this paper we follow \cite{What} and use the notion deformation complex to describe the $\mmS L_\infty$-algebra whose Maurer-Cartan elements are $\infty$-morphisms (see Proposition \ref{MC Elements of Def Complex}).
			\begin{definition}[Deformation Complex]
				\label{Deformation Complex}
				Let $P$ be a possibly inhomogeneous Koszul operad and $A,B$ be two $P_\infty$-algebra.\\
				Then we may endow the graded vector space $\mHom_{\mathrm{grVect}}(\mathbb{F}_{U(P^\antishriek)}^{c} (U(A)), U(B))$ with a $\mmS L_\infty$-algebra structure\footnote{See \cite{What}, Appendix A for a proof why this forms a $\mmS L_\infty$-algebra, indeed.}.
				\begin{align}
					\label{DefComplex 1-bracket}
					\{f\}_1 (v) := d_B f(v) - (-1)^{\abs{f}} f (d_{B_{\iota}(A) } v) + \psi_B ( (Id_{U(P)^\antishriek} \otimes f)(\Delta_1 (v)))\\
					\label{DefComplex m-bracket}
					\{f_1,, \ldots, f_m\}_m (v) = \psi_B ((\mathrm{Id}_{U(P)^\antishriek} \otimes f_1 \otimes \ldots \otimes f_m) ( \Delta_m (v)))~ \text{for}~m \geq 2,
				\end{align}
				where $\Delta_m$ is the $m$th component of the co-free co-algebra co-multiplication $\Delta_m: \mathbb{F}_{U(P^\antishriek)}^c (U(A)) \to U(P^\antishriek) (m) \otimes_{\mathbb{S}_m} (\mathbb{F}_{U(P^\antishriek)} ^c(U(A)))^{\otimes m}$ (as described in \cite{Markl}, Equation (3.64)), $d_B$ the internal differential of $B$ (seen as dgVect) and $\psi_B$ is the $P_\infty$-algebra structure describing\footnote{By means of \cite{GetzlerJones}, Prop.2.15, a $P_\infty$-algebra structure on a dg vector space $B$ may equivalently be described by a co-differential-capable $P^\antishriek$-co-derivation on $\mathbb{F}_{P^\antishriek}^c (B)$ and hence can be interpreted (see Definition \ref{Derivation wrt Morphism}) as a $U(P^\antishriek) \otimes R$-co-algebra morphism $(U(P^\antishriek)  \otimes R) ( U(B) \otimes R) \to (U(P^\antishriek) \otimes R) (U(B) \otimes R$) for $R=\mathbb{K}[\epsilon]/\epsilon^2$, where $\epsilon$ is a formal variable in degree $-1$. But because of co-freeness, this may also be described by a graded vector space homomorphism $\psi_B: \mathbb{F}_{U(P^\antishriek)}^c (U(B)) \to U(B)$.} graded vector space homomorphism $\psi_B: \mathbb{F}_{U(P^\antishriek)} ^c(U(B)) \to U(B)$.\\
				Moreover, $d_{B_{\iota}(A)}$ denotes the differential on $\mathbb{F}_{U(P^\antishriek)}^c (U(A))$ emerging from the relative bar construction of $A$ (see Equation \eqref{Relativ Bar tot}). 
				We denote this $\mmS L_\infty$-algebra by $\mathrm{Def}(A,B)$ and call it deformation complex\index{Deformation Complex}.
			\end{definition}
			Next, we show that Maurer-Cartan elements of the deformation complex are in 1-to-1 correspondence with $\infty$-morphisms.
			\begin{proposition}[$\infty$-morphisms are Maurer-Cartan elements of the deformation complex]
				\label{MC Elements of Def Complex}
				Let $P$ be a possibly inhomogeneous Koszul operad and let $A,B$ be two $P_\infty$-algebras. Moreover, we assume the deformation complex $\mDef (A,B)$ to be endowed with a descending, bounded above and complete filtration compatible with the $\mmS L_\infty$-algebra structure\footnote{The requirement of such a filtration is mainly to ensure convergence of the Maurer-Cartan equation.}.\\
				Then the Maurer-Cartan elements of the deformation complex $\mDef(A,B)$ correspond to $\infty$-morphisms between the two $P_\infty$-algebras $A$ and $B$.
			\end{proposition}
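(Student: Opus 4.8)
The plan is to use co-freeness to convert the defining compatibility of an $\infty$-morphism with the relative bar co-differentials into the Maurer--Cartan equation of the $\mmS L_\infty$-structure of Definition \ref{Deformation Complex}. Throughout, for a degree $0$ element $f \in \mHom_{\mathrm{grVect}}(\mathbb{F}_{U(P^\antishriek)}^c(U(A)), U(B))$ I write $F = \rho(f)$ for the induced $U(P^\antishriek)$-co-algebra morphism $\mathbb{F}_{U(P^\antishriek)}^c(U(A)) \to \mathbb{F}_{U(P^\antishriek)}^c(U(B))$; by co-freeness of the target, $f \mapsto F$ is a bijection onto such morphisms, with $f = \pi_B \circ F$, and it sends the zero element to the zero map (which is an $\infty$-morphism). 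So the Proposition amounts to showing that $f \in \mMC(\mDef(A,B))$ if and only if $F$ is compatible with the co-differentials $d_{B_{\iota}(A)}$ and $d_{B_{\iota}(B)}$ of the relative bar constructions, i.e. $d_{B_{\iota}(B)}\circ F = F\circ d_{B_{\iota}(A)}$.

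First I would reduce this to a condition on the corestriction. Since $F$ is a $U(P^\antishriek)$-co-algebra morphism, Lemma \ref{Commutator with Differentials} (applied with co-operad $P^\antishriek$, the $P^\antishriek$-co-algebras $B_\iota(A)$ and $B_\iota(B)$, and the morphism $F$) gives that $C_F := d_{B_{\iota}(B)}\circ F - F\circ d_{B_{\iota}(A)}$ is a co-derivation with respect to $F$. A co-derivation with respect to $F$ into a co-free co-algebra is uniquely determined by its corestriction to the cogenerators --- this is the dual of the identification $\mHom_{\mathrm{grVect}}(A,\mathbb{F}_{U(P)}(A)) \cong \mathrm{Der}(\mathbb{F}_{U(P)}(A))$ recalled in Section \ref{Revisited}. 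Hence $C_F = 0$ if and only if $\pi_B \circ C_F = 0$, and it remains to identify $\pi_B \circ C_F$ with the Maurer--Cartan series $M(f) = \sum_{k \geq 1}\tfrac{1}{k!}\{\,f,\dots,f\,\}_k$ of $\mDef(A,B)$.

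For that identification I would use the explicit form $F = \sum_{m \geq 1}(\mathrm{Id}_{U(P^\antishriek)}\otimes f^{\otimes m})\circ \Delta_m$ of the induced co-algebra morphism together with the splitting of each relative bar differential into an internal part (built from the internal differential of $A$, resp. $B$, and the co-operadic co-differential $d_{P^\antishriek}$) and a twisting part (built from the $P_\infty$-structure maps $\psi_A$, resp. $\psi_B$, through the Koszul morphism $\iota$). On the source side one has $\pi_B\circ F\circ d_{B_{\iota}(A)} = f\circ d_{B_{\iota}(A)}$, matching the term $-(-1)^{|f|}f(d_{B_{\iota}(A)}v)$ of the $1$-bracket. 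On the target side, projecting $d_{B_{\iota}(B)}\circ F$ to $U(B)$: the internal differential contributes $d_B f$; the $d_{P^\antishriek}$-part contributes nothing, since $d_{P^\antishriek}$ vanishes on the weight-$1$ cogenerators and lowers weight, so the induced co-derivation has zero corestriction even in the inhomogeneous case; and the twisting part, whose corestriction is precisely $\psi_B$ by the very definition of $\psi_B$ in Definition \ref{Deformation Complex}, contributes $\psi_B\circ F = \sum_{m\geq 1}\psi_B\big((\mathrm{Id}_{U(P^\antishriek)}\otimes f^{\otimes m})\Delta_m\big)$, i.e. the term $\psi_B((\mathrm{Id}\otimes f)\Delta_1)$ of the $1$-bracket together with the brackets $\{f,\dots,f\}_m$ for $m\geq 2$. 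Matching the combinatorial factors $\tfrac1{m!}$ and the Koszul signs with Equations \eqref{DefComplex 1-bracket}--\eqref{DefComplex m-bracket} then yields $\pi_B\circ C_F = \pm M(f)$; conceptually this is just the statement, recorded in the construction preceding Definition \ref{Deformation Complex} and in \cite{What}, that the brackets \eqref{DefComplex 1-bracket}--\eqref{DefComplex m-bracket} are the Taylor coefficients of the pullback along $\rho$ of the ``commutator with the bar co-differentials'' vector field. All sums in play are finite on a fixed element, since $P^\antishriek$ is conilpotent so that $\Delta_m$ vanishes beyond the weight; the assumed filtration is then used only to phrase the Maurer--Cartan equation within the general framework.

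Combining the two steps: $f\in\mMC(\mDef(A,B))\iff M(f)=0\iff \pi_B\circ C_F=0\iff C_F=0\iff d_{B_{\iota}(B)}\circ F = F\circ d_{B_{\iota}(A)}\iff F=\rho(f)$ is an $\infty$-morphism; together with the bijectivity of $f\mapsto\rho(f)$ this establishes the correspondence. I expect the main obstacle to be that last computation: one must carry out the projection $\pi_B\circ C_F$ while carefully interleaving $F$ with the twisting part and with the (genuinely non-trivial, in the inhomogeneous case) co-operadic co-differential inside the two bar differentials, and keep track of all Koszul signs and the normalisations $\tfrac1{m!}$ so that the outcome lands exactly on $M(f)$ as normalised in Definition \ref{Deformation Complex}.
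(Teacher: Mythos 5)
Your proposal follows the paper's own argument essentially step for step: reduce via co-freeness to the question of whether $F=\rho(f)$ commutes with the bar co-differentials, invoke Lemma \ref{Commutator with Differentials} to see the commutator is a co-derivation with respect to $F$, use that such a co-derivation into a co-free co-algebra is determined by its projection $\pi_B$, and then identify $\pi_B$ of the commutator with the Maurer--Cartan series of $f$. The only difference is that you spell out the term-by-term matching with Equations \eqref{DefComplex 1-bracket}--\eqref{DefComplex m-bracket} in more detail than the paper, which simply states that expanding $F$ and the bar differential yields the identification; your proof is correct and takes the same route.
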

			\begin{proof}[Proof of Proposition \ref{MC Elements of Def Complex}]
				First, we recall that $\infty$-morphisms $A \rightsquigarrow B$ are defined as vector space morphisms $\mathbb{F}_{U(P^\antishriek)}^c (U(A)) \to \mathbb{F}_{U(P^\antishriek)}^c (U(B))$ that are compatible with the $U(P^\antishriek)$-co-algebra co-multiplications and commute with the respective co-differentials emerging from the relative bar-constructions.\\
				For a given map $f \in \mHom_{grVect}(\mathbb{F}_{U(P^\antishriek)}^c (U(A)),U(B) )$, due to co-freeness, $F:= U(P^\antishriek)(f) \Delta$ describes a $U(P^\antishriek)$-co-algebra morphism, that is a linear map\\ $F: \mathbb{F}_{U(P^\antishriek)}^c (U(A)) \to \mathbb{F}_{U(P^\antishriek)}^c(U(B))$ compatible with the $U(P^\antishriek)$-co-algebra co-multiplication. Hence it comes down to proving that $F$ does also commute with the co-differentials $d_{B_\iota (A)}$, $d_{B_\iota (B)}$ on $\mathbb{F}_{U(P^\antishriek)}^c(U(A))$ and $\mathbb{F}_{U(P^\antishriek)}^c(U(B))$ provided $f$ is a Maurer-Cartan element.\\
				From Lemma \ref{Commutator with Differentials} 
				we know that the commutator of $F$ with the respective co-differentials forms a co-derivation w.r.t. $F$. 	
				As discussed in Definition \ref{Derivation wrt Morphism}, a co-derivation w.r.t. $F$ is a $(U(P^\antishriek)\otimes R)$-co-algebra morphism: $\mathbb{F}_{U(P^\antishriek) \otimes R}^c(U(A)\otimes R) \to \mathbb{F}_{U(P^\antishriek)\otimes R}^c(U(B) \otimes R)$. From the target being co-free such a co-algebra morphism, and consequently also the co-derivation, is completely described by its composition with the projection to $B$. Moreover, this means that $F$ commutes with the respective co-differentials if the projection to $U(B)\otimes R$ of the commutator vanishes.\\
				Therefore, the remaining step is to investigate $\pi_B \big( d_{B_\iota (A)}F- F d_{B_\iota (A)} \big)$. Expanding $F$ in terms of $f$ and explicitly writing out $d_{B_\iota(A)}$ (cf. Equations \eqref{Relativ Bar tot}-\eqref{Relative Bar 2}) shows that $\pi_B \big( d_{B_\iota (A)}F- F d_{B_\iota (A)} \big)=0$ and hence $f$ corresponding to an $\infty$-morphism, is tantamount to $f$ satisfying the Maurer-Cartan equation on the deformation complex.
			\end{proof}
			\begin{lemma}[Weight induced filtration deformation complex $\mDef(A,B)$]
				\label{Filtration on Deformation Complex}
				Let $P$ be a possibly inhomogeneous Koszul operad generated in arities $1,2$, that is to say the generating set of $P$ (by definition a Koszul operad is a quotient operad of the free-operad $\mathcal{J}(E)$ for some $\mathbb{S}$-module $E$, which we call generating set) is non-zero in arities 1 and 2 only. Further, let $A,B$ be two $P_\infty$-algebras.\\
				Recall Definition \ref{Weight Grading}, where we introduced weight filtrations on $P^\antishriek$, $\mathbb{F}_{U(P^\antishriek)}^c (A)$ and $\mHom_{grVect}(\mathbb{F}_{U(P^\antishriek)}^c (U(A)),U(B))$, respectively.
				We say an element $f \in \mDef(A,B)$ is of filtration degree $p$ if it does not contain non-trivial components in weights strictly smaller than $p-1$, i.e. we set
				\begin{equation}
					\label{Filtration}
					\mmF_p \mDef(A,B):= \{f\in \mDef(A,B) : f\vert_{{U(P^\antishriek)}^{(k)} (n) \otimes_{\mathbb{S}_n} U(A)^{\otimes n}}=0~ \forall k<p-1,\forall n\in \mathbb{N}_0 \}.
				\end{equation}
				This endows $\mDef(A,B)$ with a filtration
				\begin{align*}
					\mDef(A,B) = \mmF_1 \mDef(A,B) \supseteq \mmF_2 \mDef(A,B) \supseteq \ldots
				\end{align*}
				that is descending, bounded above, complete and compatible with the $\mmS L_\infty$-algebra structure on the deformation complex.
			\end{lemma}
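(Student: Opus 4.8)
The plan is to verify the four claimed properties of the family $\{\mmF_p \mDef(A,B)\}_{p \geq 1}$ one at a time, in the order: descending, bounded above, complete, compatible. The first is immediate from the definition \eqref{Filtration}: requiring $f$ to vanish on all weights $k < p-1$ is a stronger condition than vanishing on all weights $k < (p-1)-1$, so $\mmF_{p+1}\mDef(A,B) \subseteq \mmF_p \mDef(A,B)$, and $\mmF_1 \mDef(A,B)$ is everything since there is no weight $k < 0$ (weights are indexed by $\mathbb{N}_0$). For bounded above this is exactly the statement $\mmg = \mmF_1 \mmg$, which I have just noted. For completeness I would observe that an element $f$ of $\mHom_{grVect}$ is determined by its restrictions $f|_{{P^\antishriek}^{(k)}(n)\otimes_{\mathbb{S}_n}U(A)^{\otimes n}}$ over all $k,n$; since the quotient $\mDef(A,B)/\mmF_p\mDef(A,B)$ records precisely the components in weights $< p-1$, the canonical map $\mDef(A,B) \to \lim_{\leftarrow} \mDef(A,B)/\mmF_p\mDef(A,B)$ is an isomorphism — injectivity because an $f$ vanishing in every weight is zero, surjectivity because a compatible system of truncations assembles to a well-defined homomorphism (each component is eventually specified and the specifications agree).

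The substantive point is compatibility with the $\mmS L_\infty$-structure, i.e. that the brackets \eqref{DefComplex 1-bracket}--\eqref{DefComplex m-bracket} add filtration degree: if $f_i \in \mmF_{p_i}\mDef(A,B)$ then $\{f_1,\ldots,f_m\}_m \in \mmF_{p_1+\cdots+p_m}\mDef(A,B)$, and likewise $\{f\}_1 \in \mmF_p \mDef(A,B)$ when $f \in \mmF_p\mDef(A,B)$. This is where the hypothesis that $P$ is generated in arities $1,2$ enters. The key is to track weights through $\Delta_m$: for $v$ of weight $w$, the component $\Delta_m(v) \in U(P^\antishriek)(m)\otimes_{\mathbb{S}_m}(\mathbb{F}^c_{U(P^\antishriek)}(U(A)))^{\otimes m}$ decomposes $v$ as an operation in ${P^\antishriek}^{(w')}(m)$ (for some $w' \geq 0$) grafted onto $m$ trees whose weights $w_1,\ldots,w_m$ satisfy $w' + w_1 + \cdots + w_m = w$; because the generators of $P$ — hence the cogenerators $sE$ of $P^\antishriek$ — sit in arities $1$ and $2$, an $m$-ary vertex with $m \geq 2$ forces $w' \geq m-1$ (it takes at least $m-1$ binary cogenerators to build an $m$-ary tree). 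Applying $f_i$ to the $i$-th factor produces output in $U(B)$ only when $w_i \geq p_i - 1$, and then feeding the result through $\psi_B$ (which, being the $P_\infty$-structure on $B$, does not lower weight) the resulting evaluation on $v$ is nonzero only if $w \geq w' + \sum_i(p_i-1) \geq (m-1) + \sum_i(p_i - 1) = \sum_i p_i - 1$; that is exactly the condition $\{f_1,\ldots,f_m\}_m \in \mmF_{\sum p_i}\mDef(A,B)$. For $\{f\}_1$ one checks the three summands separately: $d_B f(v)$ and $f(d_{B_\iota(A)}v)$ preserve weight because $d_B$ and the bar differential $d_{B_\iota(A)}$ do (the latter because $P^\antishriek$'s co-differential and $P_\infty = \Omega P^\antishriek$'s structure respect the weight grading up to the standard shift, which does not decrease it on the relevant piece), so these lie in $\mmF_p$; and the term $\psi_B((Id \otimes f)(\Delta_1(v)))$ is the $m=1$ analogue of the above with $w' \geq 0$, giving weight $\geq p-1$, hence membership in $\mmF_p$.

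I expect the main obstacle to be the weight bookkeeping for the internal differential piece of $\{f\}_1$, specifically confirming that $d_{B_\iota(A)}$ — which mixes the relative-bar differential coming from $P_\infty$ acting on $U(A)$ with the co-operadic co-differential of $P^\antishriek$ — does not decrease weight, since in the inhomogeneous-Koszul setting $(qP)^\antishriek$ carries the extra co-differential described in the Remark following Definition~\ref{Def Intrinsic Formality}. The clean way around this is to recall that this co-differential on $P^\antishriek$ raises weight by one (it encodes the linear part of the relations), so a fortiori it never lowers it; combined with the fact that $P_\infty$'s action and $\psi_B$ preserve the weight grading on the nose, every structural map in \eqref{DefComplex 1-bracket}--\eqref{DefComplex m-bracket} is weight-nonincreasing on the domain side and weight-nondecreasing in the sense needed, which is all that is required. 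With that settled, the four bulleted claims follow and the lemma is proved.
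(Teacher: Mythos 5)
Your treatment of the $m$-ary brackets is essentially the paper's argument: track the weight through $\Delta_m$, use conservation of total weight, and use generation in arities $1,2$ to force the outer arity-$m$ cooperad factor to have weight at least $m-1$, while each inner factor needs weight at least $p_i-1$ for $f_i$ not to kill it; this gives $\{f_1,\ldots,f_m\}_m\in\mmF_{p_1+\cdots+p_m}\mDef(A,B)$, and the descending/bounded-above/completeness statements are routine (the paper does not even spell them out). The problem is exactly the step you flag as the main obstacle, namely the term $f\bigl(d_{B_\iota(A)}v\bigr)$ in \eqref{DefComplex 1-bracket}, and your resolution of it is wrong in both the fact invoked and the direction of monotonicity required. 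Since $f\in\mmF_p\mDef(A,B)$ vanishes on weights $<p-1$, what you must show is that $d_{B_\iota(A)}$ does \emph{not increase} weight: only then does $d_{B_\iota(A)}v$ stay in weights $<p-1$ when $v$ does, so that $f\circ d_{B_\iota(A)}$ again vanishes there. You instead argue that the co-differential of $P^\antishriek$ ``raises weight by one \ldots so a fortiori it never lowers it'' and conclude that being weight-nondecreasing ``is all that is required''. If the co-differential really raised weight, the term $f(d_{B_\iota(A)}v)$ could be nonzero on low-weight $v$ and the compatibility claim would fail; so your premise, if true, would destroy rather than rescue the argument.

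Moreover the premise is false: in the vertex-count weight grading used here (Definition \ref{Weight Grading}), the co-differential of the inhomogeneous Koszul dual co-operad \emph{lowers} the weight by one --- it replaces a two-vertex (quadratic) piece by the one-vertex (linear) piece of the corresponding relation; this is exactly what the paper uses later (proof of Lemma \ref{dg Vect Iso 1}, citing \cite{TonksBV}, Sect.~C.1, and Equations \eqref{Relativ Bar tot}--\eqref{Relative Bar 2}). The correct bookkeeping for $d_{B_\iota(A)}=d_1+d_2$ is: the $d_{P^\antishriek}\circ Id$ part of $d_1$ lowers weight by one, the $Id\circ^{\prime}d_A$ part preserves it, and $d_2$ lowers weight by at least one, since it collapses a subtree carrying the $P_\infty$-structure map $\psi_A$ (which is nonzero only in weights $\geq 1$) into an element of $A$ of weight zero. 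Hence $d_{B_\iota(A)}$ is weight non-increasing, $f\circ d_{B_\iota(A)}\in\mmF_p\mDef(A,B)$, and the lemma holds --- but you need to replace your justification of this step by the correct one; as written it asserts the opposite of the relevant fact and aims at the wrong inequality.
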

			\begin{proof}[Proof of Lemma \ref{Filtration on Deformation Complex}]
				\label{Proof of Filtration on Deformation Complex}
				By restricting to generating sets of arities $1, 2$, we can ensure the arity $k$ part of $U(P^\antishriek)$ to carry at least weight $k-1$ (using $P^\antishriek$ is as a sub-co-operad of the co-free co-operad $\mathcal{J}^c (s^{-1}E)$).\\
				Notice, that the co-operadic co-multiplication of the co-free co-operad conserves the total (i.e. the sum amongst all the factors in the tensor product) weight and so does the co-free co-algebra co-multiplication on $ \mathbb{F}_{U(P^\antishriek)}^c (U(A))$, cf. \cite{Markl}, Equation (3.64).\\
				Let $f_1, \ldots, f_i$ be such that $f_s \in \mmF_{p_s} \mDef(A,B)$ for $s=1, \ldots i$. Next, we analyse what weight $v\in \mathbb{F}_{U(P^\antishriek)}^c (U(A))$ at least must carry for $\{f_1, \ldots, f_i\}_i (v)$ not to trivially vanish.\\
				Recall that $v \in \mathbb{F}_{U(P^\antishriek)}^c (U(A))$ in general is a combination of elements of the form $v_{t,k} = U(P^{\antishriek})^{(t)} (k) \otimes_{\mathbb{S}_k} U(A)^{\otimes k}$. When trying to find the minimal weight necessary for $v$, we can equivalently look at what happens for $\{f_1, \ldots, f_i\}_i (v_{t,k})$.\\
				By means of Equation \eqref{DefComplex m-bracket} (using \cite{Markl}, Equation (3.64) for the co-algebraic co-multiplication) first the $U(P^{\antishriek})^{(t)}$- part of $v_{t,k}$ is sent to \\$U(P^{\antishriek}) (i) \otimes_{\mathbb{S}_i} \bigg( \bigoplus \mathrm{Ind}_{\mathbb{S}_{l_1} \times \ldots \times \mathbb{S}_{l_m}}^{\mathbb{S}_i} \big( \bigotimes_{s=1}^{m} U(P^\antishriek) (l_s)  \big)  \bigg)$, where the sum is over all $(l_1, \ldots,l_m)$ with $l_s \geq 1$ and $l_1 + \ldots + l_m = k$. Then, by making use of the associator (monoidal category), the $U(P^\antishriek)(l_s)$ are gathered with the $U(A)^{\otimes l_s}$.\\
				As explained above, because the generating set of $P$ is concentrated in arities $1,2$, as an arity $i$ element $U(P^\antishriek) (i)$ must carry weight $i-1$ or bigger.\\
				Moreover, each of the $U(P^\antishriek) (l_s)$ is required to carry weight $\geq p_s-1$ as otherwise application of $f_s$ on $U(P^\antishriek) (l_s) \otimes_{\mathbb{S}_{l_s}} U(A)^{\otimes l_s}$ would trivially vanish because of $f_s \in \mmF_{p_s} \mDef(A,B)$.\\
				Due to the aforementioned conservation of the total weight by the co-free co-algebra co-multiplication, we can proceed by simply adding the minimal requirements for the individual terms we have just found:
				\begin{align*}
					t \geq (i-1)+\ubr{(p_1 -1) + \ldots + (p_i -1)}_{i\text{-many terms}} = (p_1 + \ldots + p_i ) -1.
				\end{align*}
				In other words $v_{t,k}$ must at least have weight $(p_1 + \ldots + p_i)-1$, otherwise it would trivially vanish under $\{ f_1, \ldots, f_i \}_i$. But according to Equation \eqref{Filtration} this means $\{f_1, \ldots, f_i\} \in \mmF_{(p_1+ \ldots+ p_2)} \mDef(A,B)$, hence compatibility of the filtration with the $\mmS L_\infty$-algebra structure holds. \\
				Eventually, $\mmF_1 \mDef(A,B)= \mDef(A,B)$ follows directly from Equation \eqref{Filtration}, thus the filtration is guaranteed to be bounded above.
			\end{proof}
			Notice that the filtration used in Lemma \ref{Filtration on Deformation Complex} differs from the one of \cite{What}, Sect.2.1.1, in the sense that we focus on the operadic weight rather than the arity (for binary generated operads in fact both definitions coincide). The reason for doing so is that if we have a $P_\infty$-algebra structure on $A$, described by $\rho_{\infty}\in \mMor_{\mathbb{S}}(U(P^\antishriek),\mathrm{End}_{U(A)})$, that does extend a $P$-algebra structure $\rho \in \mMor_{\mathbb{S}} (U(P^\antishriek), \mathrm{End}_{U(A)})$, they do differ in weight $\geq 2$ only. Hence, using the weight for filtration is a natural choice, in particular as by this we can get rid of $\rho_\infty$-exclusive terms when going to higher pages of the spectral sequence.\\			
			Now we have a 'good` filtration at hand, hence both the Maurer-Cartan equation and the twisting-procedure are well-defined.\\
			For the scope of this section we will use the following special notation to denote the twisted deformation complex.
			\begin{definition}[Twisted Deformation Complex]
				\label{Twisted DefComplex}
				Let $P$ be a possibly inhomogeneous Koszul operad generated in arities 1,2 and let $A,B$ be two $P_\infty$-algebras.\\
				Notice, that a graded vector space homomorphism $f:U(A) \to U(B)$ corresponds to an element in $F \in \mDef(A,B)$, namely the graded vector space homomorphism
				\begin{align}
					\label{Corresponding DefComplex element}
					F: \bigoplus_{n \in \mathbb{N}_0} U(P^\antishriek) (n) \otimes_{\mathbb{S}_n} U(A)^{\otimes n} \twoheadrightarrow U(P^\antishriek) (1) \otimes U(A) \stackrel{\epsilon \circ id_{U(A)}}{\to}  \mathbb{K} \otimes U(A) \cong U(A) \stackrel{f}{\to} U(B),
				\end{align}
				where $\epsilon$ denotes the co-operadic co-unit of $P^\antishriek$.\\
				Hence, for a degree zero graded vector space homomorphism $f: U(A) \to U(B)$ it makes sense to twist the deformation complex $\mDef (A,B)$ by the corresponding $F\in \mDef(A,B)$.\\
				The resulting curved $\mmS L_\infty$-algebra we denote by
				\begin{align}
					\label{Eq Twisted DefComplex}
					\mDef (A \stackrel{f}{\to}B) := \mDef(A,B)^F.
				\end{align}
			\end{definition}
			Notice, that this is not the most general notation of a twisted deformation complex since, according to Definition \ref{Twisted curved SLinfty algebra}, we could twist by any degree zero element $F \in \mDef(A,B)$ rather than those induced by degree zero graded vector space homomorphisms $f: U(A) \to U(B)$. However, we motivate this more specific notation by the fact that in Theorem \ref{Intrinsic Formality} we start with a degree zero graded vector space morphism $f: A \to B$ that is a quasi-isomorphism of dg vector spaces, but whose induced co-free $U(P^\antishriek)$-co-algebra morphism fails to commute with the co-differentials from the relative bar constructions and twist the therefore non-flat twisted deformation complex $\mDef (A \stackrel{f}{\to}B)$ by an element $\alpha \in \mmF_2 \mDef(A\stackrel{f}{\to}B)$ resulting in a flat $\mmS L_\infty$-algebra. As explained in a remark in Theorem \ref{Main Theorem}, this is equivalent to $F+\alpha$ forming a Maurer-Cartan element. From $\alpha$ being of filtration degree 2 the twist by $\alpha$ does not change the behaviour on weight 0. But since the weight 0 part is the one responsible for the property of the $\infty$-morphism being an $\infty$-(quasi)-isomorphism, the final element behaves in this regard according to $f$. By explicitly writing $\mDef (A \stackrel{f}{\to} B)$, we highlight this aspect.\\
			\subsection{A sufficient condition for intrinsic Formality}
			With all these definitions and formalism at hand, we are finally ready to work on giving a sufficient condition for intrinsic formality of $P_\infty$-algebras. To this end, we will proceed with some lemmata that will later be used in the proof of Theorem \ref{Intrinsic Formality}.
			\begin{lemma}[Intrinsic Formality in Maurer-Cartan-language]
				\label{Translate to MC-Language}
				Let $A$ be a $P_\infty$-algebra for some possibly inhomogeneous Koszul operad $P$ that is generated in arities 1 and 2.\\
				Then $A$ is intrinsically formal if\footnote{Here, we used the notion from Definition \ref{Algebra Strukturen auf auf H(A)} to emphasise which $P_{(\infty)}$-algebra structure on the dg vector $H(B)$ we consider. By $\mmF_p$ we denote the filtration introduced in Lemma \ref{Filtration on Deformation Complex}.}
				\begin{align}
					\label{Intrinsic Formality Equivalent Definition}
					\mathrm{MC}(\mmF_2 (\mDef (H(B)_{\mHTT}  \stackrel{\mathrm{id}}{\to} H(B)))) \neq \emptyset,
				\end{align}
				for all $P_\infty$-algebras $B$ satisfying $H(B) \substack{\cong \\ P\mathrm{-alg.-iso.}} H(A)$.
			\end{lemma}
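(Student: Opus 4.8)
The plan is to peel the definition of intrinsic formality down to the existence of a single $\infty$-quasi-isomorphism, and then to read off such a morphism, via the twisting dictionary, from a Maurer-Cartan element of the twisted deformation complex that sits in filtration degree $2$.

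First I would invoke the footnote to Definition \ref{Def Intrinsic Formality}: $A$ is intrinsically formal precisely when every $P_\infty$-algebra $B$ with $H(B)$ isomorphic to $H(A)$ as a $P$-algebra is $\infty$-quasi-isomorphic to $H(B)$ (equipped with the $P_\infty$-structure of Definition \ref{Algebra Strukturen auf auf H(A)}). So I fix such a $B$; it then suffices to exhibit an $\infty$-quasi-isomorphism relating $B$ and $H(B)$. By the homotopy transfer theorem recalled in Definition \ref{Algebra Strukturen auf auf H(A)} there are $\infty$-quasi-isomorphisms $\iota\colon H(B)_{\mHTT}\rightsquigarrow B$ and $p\colon B\rightsquigarrow H(B)_{\mHTT}$; since a composite of $\infty$-quasi-isomorphisms is again one, it is enough to produce an $\infty$-quasi-isomorphism $\Phi\colon H(B)_{\mHTT}\rightsquigarrow H(B)$, for then composing $p$ with $\Phi$ gives the desired morphism $B\rightsquigarrow H(B)$.

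The heart of the argument is the translation of ``$\infty$-quasi-isomorphism $H(B)_{\mHTT}\rightsquigarrow H(B)$ with linear part $\mathrm{id}$'' into the Maurer-Cartan language. Note that $H(B)_{\mHTT}$ and $H(B)$ are two $P_\infty$-structures on the same graded vector space, with common underlying dg vector space $(H(B),0)$ (zero differential, being a cohomology), so $\mathrm{id}\colon U(H(B)_{\mHTT})\to U(H(B))$ is a quasi-isomorphism of dg vector spaces. Let $F\in\mDef(H(B)_{\mHTT},H(B))$ be the degree-$0$ element associated with $\mathrm{id}$ as in Definition \ref{Twisted DefComplex}; by Equation \eqref{Corresponding DefComplex element} it is concentrated in weight $0$ with weight-$0$ component $\mathrm{id}$, so $F\notin\mmF_2$. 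By Lemma \ref{Filtration on Deformation Complex} the weight-induced filtration on $\mDef(H(B)_{\mHTT},H(B))$ is descending, bounded above, complete and compatible with the $\mmS L_\infty$-structure, and by Lemma \ref{Twisting does not destroy} the twist $\mDef(H(B)_{\mHTT}\stackrel{\mathrm{id}}{\to}H(B))=\mDef(H(B)_{\mHTT},H(B))^F$ inherits such a filtration. Now if $\alpha\in\mMC\big(\mmF_2\,\mDef(H(B)_{\mHTT}\stackrel{\mathrm{id}}{\to}H(B))\big)$, the twisting identity \eqref{TwistedAlt} shows that $F+\alpha$ is a Maurer-Cartan element of $\mDef(H(B)_{\mHTT},H(B))$, so by Proposition \ref{MC Elements of Def Complex} it corresponds to an $\infty$-morphism $\Phi\colon H(B)_{\mHTT}\rightsquigarrow H(B)$. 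Since $\alpha\in\mmF_2$, Equation \eqref{Filtration} forces $\alpha$ to vanish on the weight-$0$ part of $\mathbb{F}_{U(P^\antishriek)}^c(U(H(B)))$; hence the weight-$0$ component of $F+\alpha$ equals that of $F$, i.e.\ the linear part $\xi(P^{\antishriek^{(0)}})$ of $\Phi$ is $\mathrm{id}$, which is a quasi-isomorphism. By Definition \ref{Infinity (Quasi-)Isomorphism} the morphism $\Phi$ is an $\infty$-quasi-isomorphism, and composing with $p$ finishes the proof (and $B$ was arbitrary among $P_\infty$-algebras with $H(B)$ $P$-algebra-isomorphic to $H(A)$).

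The one delicate point — and the reason the weight filtration of Lemma \ref{Filtration on Deformation Complex} rather than the arity filtration of \cite{What} is used — is this last matching: one must verify that being in filtration degree $\geq 2$ is exactly what prevents the twist $F\mapsto F+\alpha$ from perturbing the weight-$0$ component, so that the resulting $\infty$-morphism still has linear part $\mathrm{id}$ and is therefore an $\infty$-quasi-isomorphism. Everything else is a formal chaining together of the homotopy transfer theorem, Proposition \ref{MC Elements of Def Complex} and Lemmata \ref{Twisting does not destroy} and \ref{Filtration on Deformation Complex}.
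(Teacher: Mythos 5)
Your proposal is correct and follows essentially the same route as the paper's proof: untwisting the Maurer-Cartan element via Equation \eqref{TwistedAlt}, invoking Proposition \ref{MC Elements of Def Complex} to get an $\infty$-morphism $H(B)_{\mHTT}\rightsquigarrow H(B)$, using $\alpha\in\mmF_2$ to keep the weight-$0$ (linear) part equal to $\mathrm{id}$, and composing with the homotopy-transfer $\infty$-quasi-isomorphism $p\colon B\rightsquigarrow H(B)_{\mHTT}$. The only cosmetic difference is that the paper notes the resulting morphism is in fact an $\infty$-isomorphism (its linear part being $\mathrm{id}$), whereas you only use that it is an $\infty$-quasi-isomorphism, which suffices.
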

		
			\begin{proof}[Proof of Lemma \ref{Translate to MC-Language}]
				We start by observing that $\alpha \in \mMC (\mDef (H(B)_{\mathrm{HTT}} \stackrel{\mathrm{id}}{\to} H(B)))$ is equivalent to $\alpha+ \mathrm{Id}$ being a Maurer-Cartan element of $\mDef(H(B)_{\mHTT},H(B))$ and by Proposition \ref{MC Elements of Def Complex}, $\alpha+ Id$ forming an $\infty$-morphism $\alpha + \mathrm{Id}: H(B)_{HTT} \rightsquigarrow H(B)$.\\
				Moreover, if $\alpha$ is subject to $\alpha \in \mmF_2 (\mDef (H(B)_{\mHTT}, H(B)))$ (the underlying vector space and the filtration do not change under twisting), $\alpha$ vanishes in weight zero. This leads to the conclusion that only the $\mathrm{Id}$-part is responsible for the weight zero part of the just found $\infty$-morphism.\\
				Since, according to Definition \ref{Infinity (Quasi-)Isomorphism}, the property of being an $\infty$-(quasi-)isomorphism does only involve the weight zero part, which here is $\mathrm{id}: H(B)_{\mHTT} \to H(A)$, $\alpha + \mathrm{Id}$ forms an $\infty$-isomorphism, indeed.\\
				Composing this $\infty$-isomorphism with the $\infty$-quasi-isomorphism (by the homotopy transfer theorem, see \cite{TonksBV}, Theorem 49) $p: B ~\substack{\simeq \\ \infty-\mathrm{quasi-iso.}}~H(B)$ results in an $\infty$-quasi-isomorphism  $B~\substack{\simeq \\ \infty\mathrm{-quasi-iso.}} ~ H(B)$.
			\end{proof}
			
			\begin{lemma}
				\label{dg Vect Iso 1}
				Let $B$ be a $P_\infty$-algebra for a possibly inhomogeneous Koszul operad $P$ that is generated in arities 1 and 2.\\
				Then, there is an isomorphism of graded vector spaces
				\begin{align}
					\label{dgVect-Iso Spect Sequencend Def-Complex 1}
					E_1 (\mDef (H(B)_{\mathrm{HTT}} \stackrel{\mathrm{id}}{\to} H(B))) \substack{\cong \\ \mathrm{grVect}} \mDef (H(B) \stackrel{\mathrm{id}}{\to} H(B)),
				\end{align}
				where $E_1$ denotes the first page of the spectral sequence\footnote{In Lemma \ref{Filtration of Curvature} we prove the curvature $\mu_{0,twisted}$ of the twisted deformation complex on the l.h.s. of Equation \eqref{dgVect-Iso Spect Sequencend Def-Complex 1} to satisfy $\mu_{0,twisted} \in \mmF_3 \mDef (H(B)_{\mathrm{HTT}} \stackrel{\mathrm{id}}{\to} H(B))$, hence we can, as explained in Section \ref{Spectral Sequence}, apply the construction of a spectral sequence on a filtered chain complex up to page 2. Moreover, this ensures the differential on the first page to be a proper differential.}.
			\end{lemma}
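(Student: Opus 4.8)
The plan is to show that, with respect to the weight filtration of Lemma \ref{Filtration on Deformation Complex}, the zeroth differential of the spectral sequence of $\mDef(H(B)_{\mathrm{HTT}}\stackrel{\mathrm{id}}{\to}H(B))$ vanishes, so that its first page equals the associated graded module, and then to identify that associated graded, degreewise, with the underlying graded vector space of $\mDef(H(B)\stackrel{\mathrm{id}}{\to}H(B))$.

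First I would record that, writing $A:=H(B)_{\mathrm{HTT}}$, the objects $\mDef(A,H(B))$, $\mDef(H(B),H(B))$ and their twists by the identity all have one and the same underlying graded vector space $V:=\mHom_{\mathrm{grVect}}(\mathbb{F}^c_{U(P^\antishriek)}(U(H(B))),U(H(B)))$: the Hom-space depends only on the underlying graded vector spaces of source and target, which are $H(B)$ in each case, and twisting (Definition \ref{Twisted curved SLinfty algebra}) does not alter the underlying space. The weight grading of Definition \ref{Weight Grading} decomposes $V$ into its weight-homogeneous pieces $V_w$ (the homomorphisms supported in weight $w$), and by Equation \eqref{Filtration} one has $\mmF_pV=\prod_{w\geq p-1}V_w$; hence by Equation \eqref{zeropage} the zeroth page is $E_0^{p,\bullet}=\mmF_pV^{\bullet}/\mmF_{p+1}V^{\bullet}\cong V_{p-1}$. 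So, degreewise, $E_0$ is the same (weight and cohomological-degree) graded vector space as $V$, which is in turn the underlying graded vector space of $\mDef(H(B)\stackrel{\mathrm{id}}{\to}H(B))$ (Definition \ref{Twisted DefComplex}).

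The main step is therefore to prove $d_0=0$, which amounts to showing that the twisted $1$-bracket $\mu_1^F$ of $\mDef(A\stackrel{\mathrm{id}}{\to}H(B))$ strictly raises the filtration degree, $\mu_1^F(\mmF_pV)\subseteq\mmF_{p+1}V$. Here $F\in\mmF_1V$ is the element attached to $\mathrm{id}_{H(B)}$ by Equation \eqref{Corresponding DefComplex element}, supported in weight $0$; by Equation \eqref{Twisted} we have $\mu_1^F=\{-\}_1+\sum_{k\geq 1}\tfrac1{k!}\{F,\dots,F,-\}_{k+1}$, and for $k\geq 1$ the $(k{+}1)$-ary correction term maps $\mmF_pV$ into $\mmF_{k+p}V\subseteq\mmF_{p+1}V$ by compatibility of the filtration with the $\mmS L_\infty$-structure (Lemma \ref{Filtration on Deformation Complex}). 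It thus remains to examine $\{-\}_1$ via Equation \eqref{DefComplex 1-bracket}, for $f\in\mmF_pV$: the term $d_Bf(v)$ vanishes because the internal differential of $B=H(B)$ is zero; the term $f\circ d_{B_\iota(A)}$ lies in $\mmF_{p+1}V$ because the relative bar differential $d_{B_\iota(A)}$ on $\mathbb{F}^c_{U(P^\antishriek)}(U(A))$ strictly lowers the $P^\antishriek$-weight, its $d_{P^\antishriek}$-part removing one vertex of $P^\antishriek$ (the co-differential of $P^\antishriek$ being induced by the linear part of the relations) and its structure part being the co-derivation attached to $\psi_{A}$, whose weight-$0$ component $d_{A}$ is zero; and the term $\psi_B((\mathrm{Id}\otimes f)\Delta_1(v))$ lies in $\mmF_{p+1}V$ because $\Delta_1$ preserves the total weight while $\psi_B$, again with vanishing weight-$0$ component since $d_{H(B)}=0$, only sees inputs whose $U(P^\antishriek)(1)$-slot carries weight $\geq 1$, the extra unit of weight forcing $v$ into weight $\geq p$. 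Together these give $\mu_1^F(\mmF_pV)\subseteq\mmF_{p+1}V$, hence $d_0=0$ and $E_1=H(E_0,d_0)=E_0$; combined with the degreewise identification of $E_0$ above, this yields the claimed grVect-isomorphism.

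I expect the delicate point to be the weight bookkeeping in the third paragraph: making precise, from the definition of the relative bar construction and from the description \eqref{DefComplex 1-bracket} of the $1$-bracket, that $d_{B_\iota(A)}$ and the comultiplication component $\Delta_1$ interact with the weight grading of $P^\antishriek$ exactly as stated — so that both pieces of $d_{B_\iota(A)}$ strictly lower weight — and that the structure maps $\psi_{A}$ and $\psi_B$ carry no weight-$0$ component once the internal differential of $H(B)$ has been set to zero. Everything else reduces to the filtration estimates already established in Lemma \ref{Filtration on Deformation Complex}.
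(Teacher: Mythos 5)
Your proposal is correct and follows essentially the same route as the paper: you show that the twisted $1$-bracket strictly raises the weight filtration degree (handling the twist corrections by filtration compatibility, and the terms $d_{H(B)}f$, $f\circ d_{B_\iota}$, $\psi\circ(\mathrm{Id}\otimes f)\Delta_1$ by the same weight bookkeeping the paper uses), so $d_0=0$ and $E_1\cong E_0$, and then you identify the associated graded, weight by weight, with $\mDef(H(B)\stackrel{\mathrm{id}}{\to}H(B))$ exactly as in the paper's map $\Phi$. The only cosmetic difference is that you fold the (vanishing) internal differential into the weight-zero component of the structure maps rather than keeping it in the $d_1$-part of the bar differential, which does not affect the argument.
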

			\begin{proof}[Proof of Lemma \ref{dg Vect Iso 1}]
				First, we demonstrate that the 1-bracket $\{. \}_1^\mathrm{Id}$ on the deformation complex raises the degree of filtration by one. To this end let us take $f\in \mmF_p \mDef (H(B)_{\mathrm{HTT}} \stackrel{\mathrm{id}}{\to} H(B))$. We examine which weight $v \in \mathbb{F}_{U(P^\antishriek)}^c (H(B)_{\mHTT})$ must carry to not trivially vanish under $\{f\}_1^{\mathrm{Id}}$.\\
				We notice that all terms originating from the twist by $\mathrm{Id}$ can be ignored, since these terms do all involve at least one $\mathrm{Id}$ term that satisfies $Id \in \mmF_1 \mDef(H(B)_{\mHTT},H(B))$ and hence, due to compatibility of the filtration with the $\mmS L_\infty$-algebra brackets, guarantees the expressions of the form $\{\mathrm{Id}, \ldots, \mathrm{Id}, f\}_{\geq 2}$ to lie in $\mmF_{p+1} \mDef(H(B)_{\mHTT} \stackrel{id}{\to} H(B))$.\\
				Let $v$ be of weight $k$ and of the form $P^{\antishriek (k)} (m) \otimes_{\mathbb{S}_m} {H(B)_{\mathrm{HTT}}}^{\otimes m}$. We continue by analysing the threshold on $k$ for producing a non-vanishing contribution on each of the three terms of
				\begin{align*}
					\{f\}_1 (v) = d_{H(B)} f(v) - (-1)^{\abs{f}} f (d_{B_{\iota}(H(B)_{HTT}) } v) + \psi_{H(B)} ( Id_{U(P)^\antishriek} \otimes f)(\Delta_1 (v))
				\end{align*}
				individually.\\
				Due to $d_{H(B)}=0$, the term $d_{H(B)} f$ vanishes.\\
				For the third term, $\psi_{H(B)} (1 \otimes f)(\Delta_1 (v))$, we know that\\$\Delta_1 (v) \in \sum_{k_1 + k_2 =k} {(P)^\antishriek}^{(k_1)} \bar{\circ} {(P)^\antishriek}^{(k_2)} \bar{\circ} {H(B)_{\mathrm{HTT}}}^{\otimes m}$. %
				But the $P_\infty$-algebra structure describing graded $\mathbb{S}$-module morphism $\rho_{H(B)}: U(P^\antishriek) \to \mathrm{End}_{H(B)}$ vanishes in weight 0 (see \cite{Loday}, Sect. 10.1.8) and so does $\psi_{H(B)} \in \mHom_{gr Vect} (\mathbb{F}_{U(P^\antishriek)}^c (H(B)), H(B))$, due the internal hom adjunction. Therefore, $\psi_{H(B)} (1 \otimes f)$ applied to $\Delta_1(v)$ trivially vanishes unless $k_1 \geq  1$. Furthermore, because of $f \in \mmF_p \mDef (H(B)_{\mathrm{HTT}} \stackrel{\mathrm{id}}{\to} H(B))$ the expression does also vanish for $k_2 < p-1$. As a direct consequence of these two observations $k \geq p$ is a necessity for $\psi_{H(B)} (1 \otimes f)(\Delta_1 (v))$ not being trivial and therefore we have $\psi_{H(B)} (1 \otimes f) \Delta_1 \in \mmF_{p+1} \mDef (H(B)_{\mathrm{HTT}} \stackrel{\mathrm{id}}{\to} H(B))$.\\
				Eventually, there remains the term $f (d_{B_\iota (H(B)_{\mHTT})} v)$, where (cf. \cite{Loday}, Sect.11.2.2) 
				\begin{align}
					\label{Relativ Bar tot}
					d_{B_\iota (H(B)_{\mHTT})} = d_1 +d_2
				\end{align}
				with
				\begin{align}
					\label{Relative Bar 1}
					d_1 = d_{P^{\antishriek}} \circ Id_{H(B)_{\mHTT}} + \ubr{Id_{P^\antishriek} \circ^\prime d_{H(B)_{\mHTT}}}_{=0}
				\end{align} 
				and $d_2$ is the differential-capable $P^\antishriek$-co-derivation induced by the $P_\infty$-structure describing homomorphism $\psi_{H(B)_{\mHTT}} : \mathbb{F}_{U(P^\antishriek)}^c (H(B)_{\mHTT}) \to H(B)_{\mHTT}$, that is (see \cite{Markl}, Vol.\rnum{2}, Prop.3.83)\\
				\begin{align}
					\label{Relative Bar 2}
					d_2 = \sum_{n \in \mathbb{Z}_\geq 1} \Big( \sum_{j=0}^{n-1} Id_{U(P^\antishriek)} \otimes \big( {\pi_{H(B)_{\mHTT}}}^{\otimes j} \otimes \psi_{H(B)_{\mHTT}} \otimes  {\pi_{H(B)_{\mHTT}}}^{\otimes n-j-1} \big)     \Big) \Delta_n,
				\end{align}
				where $\pi_{{H(B)}_{\mHTT}} : \mathbb{F}_{U(P^\antishriek)}^c (H(B)_{\mHTT}) \to H(B)_{\mHTT}$ denotes the projection described in the introduction of Section \ref{Applications}. Due to $d_{H(B)_{\mHTT}}=0$, the $Id_{U(P^\antishriek)} \circ^\prime d_{H(B)_{\mHTT}}$ term vanishes. Moreover, $d_1$ reduces the weight by one since the co-operadic co-differential of a Koszul dual co-operad lowers the weight by one (see \cite{TonksBV}, Sect.C.1.).\\ 
				Further, as $\psi _{H(B)_{\mHTT}}$ is non-vanishing in weights $\geq 1$ only and since the co-algebra co-multiplication $\Delta$ leaves the weight unaltered, it follows that also $d_2$ does lower the weight by one.\\
				Therefore, we find $f(d_{B_\iota}(H(B)_{\mHTT}))\in \mmF_{p+1} \mDef(H(B)_{\mHTT} \stackrel{id}{\to} (H(B)))$ and the 1-bracket raises the degree of filtration by one, indeed.\\			
				A direct consequence is that as graded vector spaces (but not as dg vector spaces) the zeroth and the first page of the spectral sequence of  $(\mDef(H(B)_{\mHTT} \stackrel{id}{\to} H(B)), \{.\}_1)$ are isomorphic.\\
				Furthermore, by definition of the zeroth page of the spectral sequence (see Equation \eqref{rthpage}), an element $g \in E_0^{p,q}(\mDef(H(B)_{\mHTT}\stackrel{id}{\to} H(B)))$ can be represented by $g: \mathbb{F}_{U(P^\antishriek)}^c (H(B)_{\mHTT}) \to H(B)$ that is only non-zero in weight p-1.\\ 
				Notice, that as filtered (with the filtration from Lemma \ref{Filtration on Deformation Complex}) graded vector spaces $\mDef(H(B)_{\mHTT} \stackrel{id}{\to} H(B))$ and $\mDef(H(B)\stackrel{id}{\to} H(B))$ do not differ and the same also holds for the respective zeroth pages of the spec.seq.\\
				Consequently, among others using 
				\begin{align*}
					\prod_{p \geq 0} \mmF_p \mmg / \mmF_{p+1} \mmg \hspace{0.2em} \substack{\cong \\ \mathrm{grVect}} \hspace{0.2em} \mmg,
				\end{align*}
				we find 
				\begin{align}
					\label{Vect Iso}
					\begin{aligned}
						\Phi: \prod_{p \geq 0} E_1^{p,q}(\mDef(H(B)_{\mHTT}\stackrel{id}{\to} H(B))) &\substack{\cong \\ \mathrm{grVect}} \prod_{p \geq 0} E_0^{p,q}(\mDef(H(B)_{\mHTT}\stackrel{id}{\to} H(B)))\\
						&\substack{\cong \\ \mathrm{grVect}} \mDef(H(B) \stackrel{id}{\to} H(B)) : \Phi^{-1},
					\end{aligned}
				\end{align}
				where the graded vector space isomorphism $\Phi$ from left to right is mapping a representative $f \in \mmF_p \mDef(H(B)_{\mHTT} \stackrel{id}{\to} H(B))$ to its part that is non-vanishing in weight $p-2$ only and in the other direction we have that $\Phi^{-1}$ is decomposing $\mDef(H(B) \stackrel{id}{\to} H(B))$ into its weight components (see Definition \ref{Weight Grading}) and then take the weight $p-2$, degree q part and map it to $E_1^{p,q} (\mDef(H(B)_{\mHTT}\stackrel{id}{\to} H(B)))$ by means of the quotient map (since the $\{.\}_1^{\mathrm{Id}}$ does raise the degree of filtration by at least one, this is well-defined).
			\end{proof}
			\begin{lemma}
				\label{dg Vect Iso 2}
				The graded vector space isomorphism of Equation \eqref{Vect Iso} is an isomorphism of filtered dg vector spaces.
			\end{lemma}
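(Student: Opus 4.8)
The plan is to check that the graded vector space isomorphism $\Phi$ of Equation \eqref{Vect Iso} is filtration-preserving and intertwines the differentials; since $\Phi$ is already known to be an isomorphism by Lemma \ref{dg Vect Iso 1}, this suffices. First note that both sides genuinely are filtered dg vector spaces, so the statement is meaningful: on the left, $E_1\big(\mDef(H(B)_{\mHTT}\stackrel{\mathrm{id}}{\to}H(B))\big)$ carries the spectral sequence differential $d_1$ induced by the twisted $1$-bracket, which is an honest differential because the curvature of that twisted complex lies in $\mmF_3$ (Lemma \ref{Filtration of Curvature}), so the construction of Section \ref{Spectral Sequence} is valid up to page $2$; on the right, $\mDef(H(B)\stackrel{\mathrm{id}}{\to}H(B))$ carries the twisted $1$-bracket $\{\cdot\}_1^{\mathrm{Id}}$, which squares to zero because the element of $\mDef(H(B),H(B))$ induced by $\mathrm{id}\colon H(B)\to H(B)$ corresponds to the identity $\infty$-morphism, hence is a Maurer--Cartan element, so by the remark in Theorem \ref{Main Theorem} the twist is flat.

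Filtration preservation is immediate from the construction of $\Phi$: it identifies $E_1^{p,q}$ with the weight-homogeneous component of $\mDef(H(B)\stackrel{\mathrm{id}}{\to}H(B))$ that, by Equation \eqref{Filtration}, sits in filtration degree exactly $p$. Hence $\Phi(\mmF_p)=\mmF_p$, and the same holds for $\Phi^{-1}$.

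For the differentials I would proceed in two steps. The first step is to observe that on $\mDef(H(B)\stackrel{\mathrm{id}}{\to}H(B))$ the bracket $\{\cdot\}_1^{\mathrm{Id}}$ raises the filtration degree by \emph{exactly} one, so that it descends verbatim to a differential on the associated graded. Indeed, in Equation \eqref{DefComplex 1-bracket} the term $d_{H(B)}f$ vanishes; in $d_{B_\iota(H(B))}=d_1+d_2$ (Equations \eqref{Relativ Bar tot}--\eqref{Relative Bar 2}) the summand $d_1=d_{P^\antishriek}\circ Id$ lowers the weight by exactly one, since the co-operadic co-differential of a Koszul dual co-operad does, and $d_2$ is built from the structure map $\psi_{H(B)}$ of the strict $P$-algebra $H(B)$, which is concentrated in weight $1$, so $d_2$ too lowers the weight by exactly one; and the $\psi_{H(B)}$-slot in the last summand of \eqref{DefComplex 1-bracket} requires exactly one unit of weight. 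The second step is to compare the two twisted $1$-brackets termwise on a weight-homogeneous representative $f$ of a class in $E_1^{p,q}$: the $d_{H(B)}f$-terms vanish on both sides, the last summands agree (same target $P$-algebra $H(B)$, same co-free co-multiplication $\Delta_1$, same underlying space), and the relative bar differentials $d_{B_\iota(H(B)_{\mHTT})}$ and $d_{B_\iota(H(B))}$ differ only through the weight-$\ge 2$ components of $\psi_{H(B)_{\mHTT}}$ entering $d_2$. Since the HTT structure extends the strict $P$-algebra structure (Definition \ref{Algebra Strukturen auf auf H(A)}), $\psi_{H(B)_{\mHTT}}$ and $\psi_{H(B)}$ agree in weights $\le 1$, so this difference lowers the weight by at least two and maps $\mmF_p$ into $\mmF_{p+2}$. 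Therefore $\{f\}_1^{\mathrm{Id}}$ computed in the $\mHTT$-complex is congruent modulo $\mmF_{p+2}$ to $\{f\}_1^{\mathrm{Id}}$ computed in $\mDef(H(B)\stackrel{\mathrm{id}}{\to}H(B))$; since the latter lies in filtration degree $p+1$ with a single weight, this is exactly the identity $\Phi\circ d_1=\{\cdot\}_1^{\mathrm{Id}}\circ\Phi$.

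The hard part is the bookkeeping in the second step: one has to make precise, using the weight-tracking already developed in the proofs of Lemma \ref{Filtration on Deformation Complex} and Lemma \ref{dg Vect Iso 1} (conservation of total weight by $\Delta_n$, the weight shift of $d_{P^\antishriek}$ and of the various $\psi$'s, and the vanishing of an element of $\mmF_p$ below a fixed weight), that every contribution originating from a weight-$\ge 2$ piece of $\psi_{H(B)_{\mHTT}}$ lands in $\mmF_{p+2}$ and is thus killed in $E_1^{p+1,q}$; the remaining checks are routine.
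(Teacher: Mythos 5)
Your proof is correct and follows essentially the same route as the paper: a termwise weight count on a weight-homogeneous representative, using $d_{H(B)}=0$, the fact that $d_1$ and $d_2$ lower the weight by exactly one, and that the HTT structure extends the strict one, so the two twisted $1$-brackets differ only through the weight-$\geq 2$ part of $\psi_{H(B)_{\mHTT}}$ in $d_2$, which raises the filtration by at least two and therefore dies on the first page. The only point the paper spells out more explicitly is the twist contribution, where generation in arities $1,2$ and $\psi_{H(B)}$ being concentrated in weight $1$ leave only the term $\{\mathrm{Id},f\}_2$, which is identical on both sides because the higher brackets depend only on the target structure — your parenthetical ``the last summands agree'' covers this, just less explicitly.
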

			\begin{proof}[Proof of Lemma \ref{dg Vect Iso 2}]
				In order for $\Phi$ to form an isomorphism of dg vector spaces we need
				\begin{align}
					\label{dgVect Iso}
					\Phi (d_{E_1 (\mDef(H(B)_{\mHTT}\stackrel{id}{\to} H(B)))} (.)) = \{ \Phi (.) \}_{1,\mDef(H(B) \stackrel{id}{\to} H(B))},
				\end{align}
				or equivalently
				\begin{align*}
					d_{E_1 (\mDef(H(B)_{\mHTT}\stackrel{id}{\to} H(B)))} (\Phi^{-1} (.)) = \Phi^{-1} (\{ . \}_{1,\mDef(H(B) \stackrel{id}{\to} H(B))} )
				\end{align*}
				for $\Phi$ and $\Phi^{-1}$ the isomorphisms from Equation \eqref{Vect Iso}.
				Let us emphasise that on the left-hand side of Equation \eqref{dgVect Iso} the differential is on the first page, i.e. it is given by $\{.\}_1^{\mathrm{Id}}$ seen as a map $E_{1}^{p,q} \to E_1^{p+1,q}$, so in particular there is a shift in the filtration degree of the domain.\\
				Let $f : \mathbb{F}_{U(P^\antishriek)}^c (H(B)_{\mHTT}) \to H(B)$ be a graded vector space homomorphism such that it vanishes in all weights except $p-1$. We now go trough all the terms in $\{f\}_{\mDef(H(B)_{\mHTT} \stackrel{id}{\to} H(B))}$ and investigate which of them survive after we apply the quotient map to it. Moreover, we also analyse if the result differs from its counterpart $\{\Phi(f)\}_{\mDef(H(B) \stackrel{id}{\to} H(B)}$, where there is no quotient map involved.\\			
				We first deal with the terms, emerging from the twisting by $\mathrm{Id}$. In general, this twisting leads to additional terms of the form $\{\mathrm{Id}, \ldots,\mathrm{Id},f\}_{\geq 2}$. Recall that the higher brackets, as defined in Equation \eqref{DefComplex m-bracket}, include the graded vector space homomorphism $\psi_{H(B)}: \mathbb{F}_{U(P^\antishriek)}^c (H(B)) \to H(B)$ that describes the $P_\infty$-algebra structure. However, $H(B)$ merely being an $P$-algebra, $\psi_{H(B)}$ is non-zero in weight 1 only and since the generating set is concentrated in arities 1,2, this implies the bracket of highest arity to be the 2-bracket. Therefore, the only additional contribution due to twisting is $\{\mathrm{Id},f\}_2$ and this is non-vanishing in weight $p$ only and as such the term $\{\mathrm{Id}, f\}_2$ remains unchanged when passing to the quotient space.
				Nonetheless, the very same term $\{\mathrm{Id},f\}_2$ also appears on the right-hand side of Equation \eqref{dgVect Iso} as the only contribution due to the twisting on the right-hand side.\\
				We continue the analysis of the composition of $f$ with the differentials by writing them down explicitly using Equation \eqref{dgVect Iso}.\\
				For $d_{H(B)} f$ there is nothing to, as this vanishes from $d_{H(B)}$ being zero, anyway.\\
				The next term we have to consider is $\psi_{H(B)} (Id_{P^\antishriek} \otimes f) (\Delta_1 (v))$. This term does appear on both sides of Equation \eqref{dgVect Iso}, however we still have to make sure that we do not lose any terms due to passing to the quotient space on the left-hand side. From $H(B)$ merely carrying a $P$-algebra structure, its $P_\infty$-algebra structure characterising map $\rho_{H(B)}: P^\antishriek \to \mathrm{End}_{H(A)}$ and $\psi_{H(B)} \in \mHom_{gr Vect} (\mathbb{F}_{U(P^\antishriek)}^c (H(B)), H(B))$ by the internal hom-adjunction, respectively, are concentrated in weight 1. Hence, $\psi_{H(B)} (1 \otimes f)\Delta_1$ is potentially non-zero only for $v \in \mathbb{F}_{U(P^\antishriek)}^c (H(B))$ being of weight $p$, as this is the only case in which $\Delta_1$ can allocate a weight 1 term in the first and a weight $p-1$ term in the second co-operadic factor. Consequently $\psi_{H(B)} (1 \otimes f) \Delta_1$ cannot be cancelled out by a term in $ \mmF_{p+2} \mDef(H(B)_{\mHTT} \stackrel{id}{\to} H(B))$ as such a term by definition must vanish on all weights less than or equal to $p$.\\
				By the same arguments that showed $\{.\}_1$ to raise the degree of filtration by one it follows that for $f \in \mmF_p \mDef (H(B)_{\mHTT} \stackrel{\mathrm{id}}{\to} H(B))$ being concentrated in weight $p-1$, the expression $f d_1$ is possibly non zero only on weight $p$. Hence, the contribution due to $d_1$ remains unchanged when passing to the quotient space on the left-hand side of Equation \eqref{dgVect Iso}. For the right-hand side, where there is no quotient space and $H(B)$ instead of $H(B)_{\mHTT}$, the exact same terms emerge from $f d_1$.\\
				We proceed by analysing $f d_2$. By virtue of the homotopy transfer theorem, the $P_\infty$-algebra structure on $H(B)_{\mHTT}$ extends the $P$-algebra structure on $H(B)$, that is to say on ${P^\antishriek}^{(1)}$ the $P_\infty$-algebra-structure describing morphisms\\$ \phi_{H(B)} \in \mMor_{\mathbb{S}\mathrm{Mod}} (U(P^\antishriek) , \mathrm{End}_{H(B)})$ and  $\phi_{H(B)_{\mHTT}} \in \mHom_{\mathbb{S}\mathrm{Mod}} (U(P^\antishriek) , \mathrm{End}_{H(B)_{\mHTT}})$ coincide. By means of the internal hom-adjunction this also implies $\psi_{H(B)} \in \mHom_{grVect} (\mathbb{F}_{U(P^\antishriek)}^c H(B), H(B))$ and $\psi_{H(B)_{\mHTT}} \in \mHom_{grVect} (\mathbb{F}_{U(P^\antishriek)}^c H(B)_{\mHTT}, H(B)_{\mHTT})$ (where we write $H(B)$ and $H(B)_{\mHTT}$ for clarity even though as dg vector spaces they coincide) to be equal in weight $1$.\\
				After noticing that, according to Equation \eqref{Relative Bar 2}, the term $\psi_{H(B)_{\mHTT}}$ appears in $f d_2$ exactly once, we continue by decomposing $\psi_{H(B)_{\mHTT}}$ in its weight 1 part $\psi_{H(B)_{\mHTT}}^{(1)}$, which is just $\psi_{H(B)}$, and its weight $\geq 2$ part that we will denote by $\psi_{H(B)_{\mHTT}}^{\geq 2}$ and call their $d_2$ contributions $d_2^{(1)}$ and $d_2^{(\geq2)}$, respectively. A short investigation shows that $fd_2^{(1)}$ is non-zero in weight $p$ only and as such does not get lost when passing to the quotient space. On the other hand the appearance of $\psi_{H(B)_{\mHTT}}^{\geq 2}$ in $f d_2^{(\geq2)}$ requires at least an additional weight $2$, making it a total of at least $p+1$. Hence $fd_2^{(\geq2)}$ becomes zero under the quotient map.\\
				But this is in line with the right-hand side of Equation \eqref{dgVect Iso} as there we only have the $P$-algebra structure $\psi_{H(B)}$ on $H(B)$  (which by above reasoning is equal to $\psi_{H(B)_{\mHTT}}^{(1)}$) and consequently validates Equation \eqref{dgVect Iso}.\\
				By carefully going through all the previous steps one can verify that $\Phi$ even is an isomorphism of filtered chain complexes.
			\end{proof}
			\begin{lemma}[Filtration of Curvature]
				\label{Filtration of Curvature}
				Let $B$ be a $P_\infty$-algebra for a possibly inhomogeneous Koszul operad $P$ that is generated in arities 1 and 2. Then, the curvature $\mu_{{0,\mathrm{twisted}}}$ of the twisted deformation complex $\mDef (H(B)_{\mathrm{HTT}} \stackrel{\mathrm{id}}{\to} H(B))$ satisfies 
				\begin{align}
					\label{Curvature Filtration}
					\mu_{{0,\mathrm{twisted}}} \in \mmF_3 \mDef (H(B)_{\mathrm{HTT}} \stackrel{\mathrm{id}}{\to} H(B)).
				\end{align}
			\end{lemma}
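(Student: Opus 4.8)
The plan is to identify the curvature of the twisted complex with the value at the twisting element of the curvature power series $M$ of the \emph{untwisted} complex, and then to show that this value is supported in weights $\geq 2$; by Equation \eqref{Filtration} that is exactly the statement $\mu_{0,\mathrm{twisted}}\in\mmF_3\mDef(H(B)_{\mHTT}\stackrel{\mathrm{id}}{\to}H(B))$.

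First I would fix notation: let $F\in\mDef(H(B)_{\mHTT},H(B))$ be the element corresponding to $\mathrm{id}\colon H(B)_{\mHTT}\to H(B)$ as in Definition \ref{Twisted DefComplex}, so that $\mDef(H(B)_{\mHTT}\stackrel{\mathrm{id}}{\to}H(B))=\mDef(H(B)_{\mHTT},H(B))^{F}$. Since the untwisted deformation complex is flat, Equation \eqref{Twisted} for $n=0$ gives $\mu_{0,\mathrm{twisted}}=\sum_{k\geq0}\tfrac1{k!}\{F,\dots,F\}_{k}=M(F)$, the curvature power series of $\mDef(H(B)_{\mHTT},H(B))$ evaluated at $F$. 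Because $F$ is concentrated in weight $0$, and $\psi_{H(B)}$, the structure map of the strict $P$-algebra $H(B)$ which is the target of the deformation complex, is concentrated in weight $1$, the weight count of Lemma \ref{Filtration on Deformation Complex} (using that $P$ is generated in arities $1,2$, so $U(P^\antishriek)(n)$ carries weight $\geq n-1$) shows that on arguments of weight $\leq1$ only the brackets $\{F\}_1$ and $\{F,F\}_2$ survive; in particular $M(F)$ is a finite expression there and is well defined.

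Next I would dispose of weight $0$: on a weight-$0$ argument only $\{F\}_1$ survives, and each of its three summands (Equation \eqref{DefComplex 1-bracket}) vanishes separately — $d_{H(B)}=0$; the term built from $\psi_{H(B)}$ lands in weight $0$, where $\psi_{H(B)}$ is zero; and $d_{B_\iota(H(B)_{\mHTT})}$ vanishes on the weight-$0$ part of $\mathbb{F}^{c}_{U(P^\antishriek)}(H(B)_{\mHTT})$, which is just the arity-$1$ co-unit. Hence $M(F)\in\mmF_2$, and the real work is weight $1$. Here the point is that on a weight-$1$ argument the only source-dependent ingredient of the $\mmS L_\infty$-brackets, the relative bar differential $d_{B_\iota(H(B)_{\mHTT})}=d_1+d_2$, can only use the weight-$\leq1$ part of $\psi_{H(B)_{\mHTT}}$ (the weight count forces the $n=1$ term of $d_2$, with $\psi$ applied to a weight-$1$ argument), and by the homotopy transfer theorem (Definition \ref{Algebra Strukturen auf auf H(A)}) that part coincides with $\psi_{H(B)}$. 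Therefore $M_{\mDef(H(B)_{\mHTT},H(B))}(F)$ and $M_{\mDef(H(B),H(B))}(F)$ coincide on weight-$1$ arguments, where on the right $H(B)$ is viewed as a $P_\infty$-algebra through its strict $P$-algebra structure. But $F$ corresponds, via the cofree construction, to the identity coalgebra endomorphism of $\mathbb{F}^{c}_{U(P^\antishriek)}(U(H(B)))$, which trivially commutes with the bar differential; i.e.\ $F$ is the identity $\infty$-morphism $H(B)\rightsquigarrow H(B)$, so by Proposition \ref{MC Elements of Def Complex} $F\in\mMC(\mDef(H(B),H(B)))$ and hence $M_{\mDef(H(B),H(B))}(F)=0$, in particular in weight $1$. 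Combining the two paragraphs, $M(F)$ is supported in weights $\geq 2$, which is the claim.

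The step I expect to be the main obstacle is the weight bookkeeping of the previous paragraph: verifying carefully that for a weight-$1$ argument the pieces $d_1$, $d_2$ and the comultiplication components $\Delta_m$ entering $M(F)$ only ever see the weight-$\leq1$ data of the source, so that replacing $H(B)_{\mHTT}$ by $H(B)$ is harmless. This relies entirely on $U(P^\antishriek)(n)$ carrying weight $\geq n-1$, which is the same mechanism already exploited in Lemmas \ref{Filtration on Deformation Complex}, \ref{dg Vect Iso 1} and \ref{dg Vect Iso 2}, so I expect it to go through with the same arguments; everything else is formal.
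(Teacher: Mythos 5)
Your proposal is correct and follows essentially the same route as the paper: identify $\mu_{0,\mathrm{twisted}}$ with $M(\mathrm{Id})$, use that $\mathrm{id}$ is a Maurer--Cartan element of the strict complex $\mDef(H(B)\stackrel{\mathrm{id}}{\to}H(B))$ (so its curvature vanishes), and observe that the only discrepancy between the two complexes comes from the weight~$\geq 2$ part of the transferred structure $\psi_{H(B)_{\mHTT}}$ entering $d_2$, which cannot contribute on arguments of weight $\leq 1$. Your separate direct treatment of weight $0$ is a harmless reorganisation of the paper's uniform comparison argument, so no substantive difference or gap remains.
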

			\begin{proof}[Proof of Lemma \ref{Curvature Filtration}]
				By unravelling the definitions of the twisted deformation complex and the filtration thereon, we see that for $\mu_{0,\mathrm{twisted}}$ to satisfy the requirement of Equation \eqref{Curvature Filtration} is tantamount to the graded vector space homomorphism $\mathbb{F}_{U(P^\antishriek)}^c (H(B)_{\mathrm{HTT}}) \to H(B)$, given by $M(\mathrm{Id})$, to vanish in weights $1$ and $2$.\\
				To this end, we notice that $id$ induces an $\infty$-morphism $\tilde{Id}: H(B) \rightsquigarrow H(B)$ for $H(B)$ the $P$-algebra from the first part of Definition \ref{Algebra Strukturen auf auf H(A)} but seen as a $P_\infty$-algebra, and as such constitutes for a Maurer-Cartan element of $\mDef(H(B) \to H(B))$. Consequently the curvature $\tilde{\mu}_{{0,\mathrm{twisted}}}$ (we added an extra $\sim$ to our notation to distinguish it from the curvature of $\mDef (H(B)_{\mathrm{HTT}} \stackrel{\mathrm{id}}{\to} H(B))$) of the twisted deformation complex $\mDef(H(B) \stackrel{id}{\to} H(B))$ is zero.\\
				As previously discussed in Lemmata \ref{dg Vect Iso 1} and \ref{dg Vect Iso 2}, $\mDef(H(B),H(B))$ and \\$\mDef(H(B)_{\mHTT},H(B))$ along with the twisted counterparts, respectively, do only differ by the latter having some additional terms in its brackets due to the occurrence of a $P_\infty$-algebra structure rather than a $P$-algebra structure. This remains valid also for the curvatures $\mu_{0,\mathrm{twisted}}$ and $\tilde{\mu}_{{0,\mathrm{twisted}}}$ of the twisted deformation complexes. That being said, due to $\tilde{\mu}_{{0,\mathrm{twisted}}}$ being zero, we may neglect all the terms of $\mu_{0,\mathrm{twisted}}$  not involving weight $\geq 2$ $P_\infty$-algebra (and hence $H(B)_{\mHTT}$ specific) terms.\\
				The remaining terms are $\mathrm{Id} ~d_2^{(\geq 2)}$, i.e. those from $d_2$ that do involve $\psi_{H(B)_{\mHTT}} : \mathbb{F}_{U(P^\antishriek)}^c (H(B))_{\mHTT} \to H(B)$ in weight $\geq 2$ only and therefore clearly need weight $\geq 2$ to attribute a non-zero contribution with.\\
				This, however, directly validates the statement $\mu_{0,\mathrm{twisted}} \in \mmF_3 \mDef (H(B)_{\mHTT} \stackrel{id}{\to} H(B))$.
			\end{proof}
			\begin{theorem}
				\label{Main Application Theorem}
				Let $A$ be a $P_\infty$-algebra for a possibly inhomogeneous Koszul operad $P$ that is generated in arities 1 and 2.\\
				If the twisted deformation complex $\mDef (H(A) \stackrel{\mathrm{id}}{\to} H(A))$ is acyclic in total degree 1 (i.e. $H^p(\mmF_q \mDef(H(A))\stackrel{\mathrm{id}}{\to} H(A)))=0$ for all p,q with $p+q=1$), then $A$ is intrinsically formal as a $P_\infty$-algebra.
			\end{theorem}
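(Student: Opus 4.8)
The strategy is to reduce intrinsic formality of $A$ to the existence of a Maurer--Cartan element of filtration degree $2$ and then invoke Theorem \ref{MainThm}. By Lemma \ref{Translate to MC-Language} it suffices to prove that $\mathrm{MC}(\mmF_2(\mDef(H(B)_{\mHTT}\stackrel{\mathrm{id}}{\to}H(B)))) \neq \emptyset$ for every $P_\infty$-algebra $B$ with $H(B)\cong H(A)$ as $P$-algebras. Fix such a $B$ and set $\mmg := \mDef(H(B)_{\mHTT}\stackrel{\mathrm{id}}{\to}H(B))$; by Lemmata \ref{Filtration on Deformation Complex} and \ref{Twisting does not destroy} this is a curved $\mmS L_\infty$-algebra carrying a descending, bounded above, complete filtration compatible with its structure maps. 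I would apply Theorem \ref{MainThm} to $\mmg$ with $r=1$, which requires two inputs. First, $\mu_0\in\mmF_{2r+1}\mmg=\mmF_3\mmg$: this is exactly Lemma \ref{Filtration of Curvature}, which in particular makes the spectral sequence of $(\mmg,\mu_1)$ well defined up to the page $E_{r+1}=E_2$ and makes $d_1$ a genuine differential. Second, $E_2^{p,q}(\mmg)=0$ for all $p,q$ with $p+q=2$. Granting both, Theorem \ref{MainThm} yields a Maurer--Cartan element of $\mmg$ lying in $\mmF_{r+1}\mmg=\mmF_2\mmg$, which is precisely what Lemma \ref{Translate to MC-Language} demands.

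The core of the proof is thus the vanishing of $E_2(\mmg)$ in total degree $2$. For this I would combine Lemmata \ref{dg Vect Iso 1} and \ref{dg Vect Iso 2}: together they exhibit an isomorphism of filtered dg vector spaces between $E_1(\mmg)$, with its differential, and $\mDef(H(B)\stackrel{\mathrm{id}}{\to}H(B))$. Passing to cohomology on both sides — using Lemma \ref{Filtration of Curvature} to know $E_2(\mmg)=H(E_1(\mmg),d_1)$ — produces an isomorphism $E_2(\mmg)\cong H(\mDef(H(B)\stackrel{\mathrm{id}}{\to}H(B)))$ respecting the bigradings up to the explicit weight-versus-filtration shift recorded in the proof of Lemma \ref{dg Vect Iso 1}. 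Finally, since $H(B)\cong H(A)$ as $P$-algebras and since the deformation complex $\mDef(H(-)\stackrel{\mathrm{id}}{\to}H(-))$ together with its weight filtration depends only on the underlying $P$-algebra structure (all brackets and the $\mathrm{id}$-twist being assembled from the structure morphism $\rho_{H(-)}$ and the trivial internal differential), we get an isomorphism of filtered complexes $\mDef(H(B)\stackrel{\mathrm{id}}{\to}H(B))\cong\mDef(H(A)\stackrel{\mathrm{id}}{\to}H(A))$. The hypothesis of the theorem, i.e.\ acyclicity of $\mDef(H(A)\stackrel{\mathrm{id}}{\to}H(A))$ in total degree $1$, therefore transports to $\mDef(H(B)\stackrel{\mathrm{id}}{\to}H(B))$, and through the identification above becomes the required $E_2^{p,q}(\mmg)=0$ for $p+q=2$.

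The step I expect to be the main obstacle is the degree bookkeeping in the last paragraph: one must match the spectral-sequence bigrading $(p,q)$ on $E_2(\mmg)$ with the (filtration index, cohomological degree) bigrading on $H(\mDef(H(A)\stackrel{\mathrm{id}}{\to}H(A)))$, tracking the off-by-one coming from the weight filtration convention of Lemma \ref{Filtration on Deformation Complex} and from the identification $E_0(\mmg)\cong E_1(\mmg)$, so that total degree $2$ on $E_2$ corresponds exactly to total degree $1$ on the untwisted deformation complex. A secondary point to treat carefully is that the spectral sequence of $(\mmg,\mu_1)$ exists only up to page $E_2$; this is, however, all that is used, since Theorem \ref{MainThm} asks only for the vanishing of $E_{r+1}$ and Lemma \ref{Filtration of Curvature} guarantees that $E_2$ is a well-defined object, with no further convergence of the spectral sequence invoked.
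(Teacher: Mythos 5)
Your proposal is correct and follows essentially the same route as the paper: reduction via Lemma \ref{Translate to MC-Language}, the curvature bound of Lemma \ref{Filtration of Curvature}, the identification of $E_1(\mDef(H(B)_{\mHTT}\stackrel{\mathrm{id}}{\to}H(B)))$ with $\mDef(H(A)\stackrel{\mathrm{id}}{\to}H(A))$ via Lemmata \ref{dg Vect Iso 1} and \ref{dg Vect Iso 2} together with $H(B)\cong H(A)$, and then Theorem \ref{MainThm} with $r=1$ to produce a Maurer--Cartan element in filtration degree $2$. One small correction to your bookkeeping worry: the off-by-one between ``total degree $2$'' in Theorem \ref{MainThm} and ``total degree $1$'' in the acyclicity hypothesis comes purely from the suspension between the unshifted $L_\infty$ formulation of that theorem and the native $\mmS L_\infty$ grading of the deformation complex (the paper indeed applies it in the shifted convention, requiring $E_2^{p,q}=0$ for $p+q=1$), and not from the weight-filtration convention of Lemma \ref{Filtration on Deformation Complex} or the identification $E_0\cong E_1$, both of which leave the total cohomological degree untouched since Lemma \ref{dg Vect Iso 2} gives an isomorphism of filtered dg vector spaces.
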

			\begin{proof}[Proof of Theorem \ref{Main Application Theorem}]
				By means of Lemma \ref{Translate to MC-Language} it suffices to show that all $P_\infty$-algebras $B$ with co-homologies isomorphic to $H(A)$ are subject to $\mMC(\mmF_2 (\mDef(H(B)_{\mHTT}  \stackrel{id}{\to} H(B)))) \neq 0$.\\
				Because of the co-homologies being isomorphic, also
				\begin{align}
					\label{W114A15}
					\mDef(H(B) \stackrel{id}{\to} H(B))  \substack{\cong \\ \text{filtered}~ dg Vect}  \mDef(H(A) \stackrel{id}{\to} H(A))
				\end{align}
				has to hold.\\
				Moreover, we can apply Lemma \ref{dg Vect Iso 2} on $B$ and, after invoking Equation \eqref{W114A15}, find
				\begin{align*}
					E_1 (\mDef(H(B)_{\mHTT} \stackrel{\mathrm{id}}{\to} H(B))) \substack{\cong \\ \text{filtered} ~dg Vect} \mDef (H(A) \stackrel{\mathrm{id}}{\to} H(A)).
				\end{align*}
				Taking co-homologies and using both, the initial assumption of acyclicity $\mDef(H(A) \stackrel{id}{\to} H(A))$ in total degree $1$ and the fact that the second page of a spectral sequence is isomorphic to the co-homology of the first, we obtain
				\begin{align}
					\label{DefComplex vanishes}
					E_2^{p,q} (\mDef (H(B)_{\mathrm{HTT}} \stackrel{\mathrm{id}}{\to} H(B))) =0
				\end{align}	
				for all $p,q$ with $p+q=1$.\\
				Because we also know from Lemma \ref{Filtration of Curvature} that the filtration degree of the curvature of $\mDef(H(B)_{\mathrm{HTT}} \stackrel{id}{\to} H(B))$ is at least 3, this puts us in a situation in which all assumptions of Theorem \ref{Main Theorem} (for $r=1$) are satisfied. Consequently, a Maurer-Cartan element $\alpha \in \mathrm{MC}(\mmF_2 \mDef(H(B)_{\mHTT} \stackrel{id}{\to} H(B)))$ must exist. 
			\end{proof}
			\appendix
			\section{Proof of Equivalence of $(\mmS)L_\infty$-algebra equations}
			\label{Appendix}
			\begin{lemma}[Equivalence of curved $(\mmS)L_\infty$-algebra Equations \rnum{1}]
				\label{Powereq also works}
				Let $\mmg$ be a curved $\mmS L_\infty$-algebra endowed with a filtration that is descending, bounded above, complete and compatible with the $\mmS L_\infty$-algebra structure. Then the curved $\mmS L_\infty$-algebra Equation \eqref{LinftyEq}\footnote{To be precise, Equation \eqref{LinftyEq} describes the curved $L_\infty$-algebra equation. But as explained in Definition \ref{SLinfty Algebra}, curved $\mmS L_\infty$-algebra brackets do share the very same condition but with the brackets being maps $\mu_n: S^n (\mathfrak{g}) \to \mathfrak{g}$ instead of $\mu_n: S^n (\mathfrak{g}[-1]) \to \mathfrak{g}[-1]$.} can equivalently be encoded in the form of Equation \eqref{powereq}.
			\end{lemma}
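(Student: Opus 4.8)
The plan is to unwind both sides of the claimed equivalence into explicit power series and exhibit a single bookkeeping identity relating them; one implication is then immediate and the other follows by graded polarisation.

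First I would write out the left-hand side of \eqref{powereq} explicitly. Extending the brackets by scalars, $M^R(x)=\sum_{n\geq 0}\frac{1}{n!}\mu_n(x,\dots,x)$, and since the argument $x$ has degree $0$ no Koszul sign intervenes when differentiating, so the differential of this multilinear map at $x$ is $DM^R(x)[v]=\sum_{n\geq 1}\frac{1}{(n-1)!}\mu_n(x,\dots,x,v)$. Substituting $v=M^R(x)$ yields
\begin{align*}
	DM^R(x)[M^R(x)]=\sum_{m\geq 0}\sum_{n\geq 1}\frac{1}{m!\,(n-1)!}\,\mu_n\big(x,\dots,x,\mu_m(x,\dots,x)\big),
\end{align*}
and I would reorganise this double sum according to the total arity $N=m+(n-1)$. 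Using that each $\mu_k$ is (shifted) graded symmetric and $|x|=0$ — so that moving $\mu_m(x,\dots,x)$, which has degree $1$, past copies of $x$ costs nothing, and all placements of the inner bracket among the $N$ slots contribute equally, replacing the unshuffle sum by the binomial factor $\binom{N}{m}$ which is absorbed into $\frac{1}{m!\,(n-1)!}$ — the layer with fixed $N$ is exactly $\frac{1}{N!}$ times the left-hand side of the curved $\mmS L_\infty$-relation \eqref{LinftyEq} evaluated on $N$ copies of $x$. Writing $\mathcal{R}_N$ for that left-hand side, this gives the identity $DM^R(x)[M^R(x)]=\sum_{N\geq 0}\frac{1}{N!}\,\mathcal{R}_N(x,\dots,x)$; both series converge because $R$ is nilpotent and the filtration on $\mmg$ is complete.

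Granting this identity, one direction is immediate: if the brackets satisfy \eqref{LinftyEq} then every $\mathcal{R}_N$ is identically zero, hence $DM^R(x)[M^R(x)]=0$ for all nilpotent graded rings $R$ and all $x\in(\mmg\hat{\otimes}R)^0$. For the converse I would invoke the graded polarisation recalled after \eqref{MR}: given homogeneous $x_1,\dots,x_N\in\mmg$, take $R=\mhK[\epsilon_1,\dots,\epsilon_N]/(\epsilon_1^2,\dots,\epsilon_N^2)$ with $\deg\epsilon_i=-\deg x_i$, evaluate the vanishing expression $DM^R(x)[M^R(x)]$ at $x=\sum_i x_i\hat{\otimes}\epsilon_i$, and read off the coefficient of the monomial $\epsilon_1\cdots\epsilon_N$. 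Because $\epsilon_i^2=0$, only the layer with $m+(n-1)=N$ in which each $\epsilon_i$ is used exactly once contributes, and that coefficient is precisely $\mathcal{R}_N(x_1,\dots,x_N)$ once the $\epsilon_i$ are pulled to the front — the Koszul signs thereby produced being exactly the shifted signs $\epsilon(\sigma,x_1,\dots,x_N)$ labelling the unshuffles in \eqref{LinftyEq}. Hence $\mathcal{R}_N(x_1,\dots,x_N)=0$ for all $N$ and all homogeneous arguments, i.e. \eqref{LinftyEq} holds.

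The conceptual content here is light, and I expect the only genuine obstacle to be the sign bookkeeping: one must check that the signs generated by differentiating $M^R$, by the graded symmetry of the $\mu_n$, and by the polarisation substitution recombine to reproduce exactly the shifted Koszul signs $\epsilon(\sigma,x_1,\dots,x_n)$ in \eqref{LinftyEq}, and also that $D$ (the differential of the multilinear map) indeed has the Taylor-coefficient form used above. All of this is routine precisely because the relevant arguments $x$ have degree $0$ in the shifted picture, which is the reason it is convenient to phrase everything in terms of $\mmg[-1]$ and $\mmS L_\infty$-brackets.
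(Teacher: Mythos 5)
Your proposal is correct and follows essentially the same route as the paper: the identity you establish by regrouping the double series for $DM^R(x)[M^R(x)]$ according to total arity is exactly the paper's computation (carried out there starting from Equation \eqref{LinftyEq} with all arguments set equal to a degree-0 element $x$ and summed with weights $\tfrac{1}{n!}$), and the converse direction is handled in both cases by graded polarisation with $R=\mhK[\epsilon_1,\dots,\epsilon_N]/(\epsilon_i^2)$. Your write-up is, if anything, slightly more explicit than the paper about the polarisation step and the sign bookkeeping, which the paper simply delegates to Section \ref{power}.
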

			\begin{proof}[Proof of Lemma \ref{Powereq also works}]
				The implication \eqref{powereq} $\Rightarrow$ \eqref{LinftyEq} follows directly from graded polarisation (see Section \ref{power}).\\
				For the other direction we first recall from Equation \eqref{MR} that we can extend the $\mmS L_\infty$-algebra structure to $\mmg \hat{\otimes} R$ for $R$ being a nilpotent graded ring. We may rewrite Equation \eqref{LinftyEq} for some fixed $n$ in the case of all $x_i$ being set to some arbitrary $x \in (\mmg \hat{\otimes} R)^0$ and get
				\begin{align}
					\label{Linfty intermediate}
					\sum_{\substack{j,k >0\\j+k=n}} \binom{j+k}{k}\mu_{j+1} (\mu_k(x, \ldots,x),x, \ldots, x,) =0,
				\end{align}
				since $x$ is of degree 0 and therefore we can reorder without picking up additional signs for all the $\binom{j+k=n}{k}$-many $(j,k)$-unshuffles.\\
				Summing over $n$, adding an additional factor $\frac{1}{n!}$ (for each fixed $n$ the summand vanishes separately) and using $\binom{j+k=n}{k}= \frac{n!}{j!k!}$ leads to\\
				\begin{align*}
					&\sum_{n\geq0} \sum_{\substack{j,k >0\\j+k=n}}\frac{1}{j!k!} \mu_{j+1} (\mu_k (x, \ldots, x),x \ldots,x) \\
					&=  \sum_{j\geq 0} \frac{1}{j!} \mu_{j+1} (\sum_{k\geq0} \frac{1}{k!} \mu_k (x, \ldots, x), x \ldots, x)\\
					&= \sum_{l \geq 1} \frac{1}{(l-1)!} \mu_{l} (M^R(x), x \ldots, x)\\
					&= \sum_{l \geq 1} \frac{l}{l!} \mu_{l} (M^R(x), x \ldots, x)
					\substack{=\\\eqref{DM explicit}} DM^R(x)[M^R(x)] =0.
				\end{align*}
			\end{proof}
			\begin{lemma}[Equivalence of curved $(\mmS)L_\infty$-algebra Equations \rnum{2}]
				\label{Both powereqs work}
				Let $\mmg$ be a curved $\mmS L_\infty$-algebra endowed with a filtration that is descending, bounded above, complete and compatible with the $\mmS L_\infty$-algebra structure. Then the curved $\mmS L_\infty$-algebra Equation \eqref{powereq} can equivalently be encoded in the form of Equation \eqref{powereqalt}.
			\end{lemma}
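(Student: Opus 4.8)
The plan is to prove the equivalence by a single direct computation: I expand the left-hand side of Equation \eqref{powereqalt} in the formal variable $\epsilon$ and read off that its $\epsilon^0$-part is exactly $M^R(x)\hat{\otimes}1$ while its $\epsilon^1$-part is exactly $DM^R(x)[M^R(x)]$. Once this expansion is in hand, Equation \eqref{powereqalt} — which asserts that the left-hand side equals $M^R(x)\hat{\otimes}1$ — holds precisely when the $\epsilon^1$-coefficient vanishes, i.e. precisely when Equation \eqref{powereq} holds. Performing this for every nilpotent graded ring $R$ (with $R'=\mathbb{K}[\epsilon]/\epsilon^2$) and every $x\in(\mmg\hat{\otimes}R)^0$ delivers both implications simultaneously.

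First I would record the degree bookkeeping that renders the computation sign-free. Since $x$ has degree $0$, the brackets $\mu_n$ raise degree by $1$, so $M^R(x)$ has degree $1$; as $\epsilon$ has degree $-1$, the summand $M^R(x)\hat{\otimes}\epsilon$ has degree $0$, and the evaluation point $x\hat{\otimes}1+M^R(x)\hat{\otimes}\epsilon$ genuinely lies in $(\mmg\hat{\otimes}R\otimes R')^0$. Because every argument is of degree $0$, no shifted Koszul signs arise when expanding by multilinearity nor when factoring $\epsilon$ out of a bracket (the only degree $-1$ datum, $\epsilon$, is paired with its own degree $1$ vector $M^R(x)$ and crosses only degree $0$ elements). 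Writing $a:=x\hat{\otimes}1$ and using $\epsilon^2=0$ to discard all higher powers, each bracket collapses to
\begin{align*}
    \mu_k\big(a+M^R(x)\hat{\otimes}\epsilon, \ldots, a+M^R(x)\hat{\otimes}\epsilon\big) = \mu_k(a, \ldots, a) + \epsilon\sum_{j=1}^{k}\mu_k(a, \ldots, M^R(x), \ldots, a),
\end{align*}
with $M^R(x)$ inserted in the $j$th slot. Graded symmetry of $\mu_k$ together with the vanishing of all signs reduces the inner sum to $k\,\mu_k(M^R(x),a,\ldots,a)$.

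Summing against $\tfrac{1}{k!}$ and reindexing then gives the $\epsilon^0$-part as $\sum_k\tfrac{1}{k!}\mu_k(a,\ldots,a)=M^R(x)\hat{\otimes}1$ and the $\epsilon^1$-part as $\sum_{l\geq 1}\tfrac{l}{l!}\mu_l(M^R(x),x,\ldots,x)=DM^R(x)[M^R(x)]$, invoking the explicit form of $DM^R$ recorded in the proof of Lemma \ref{Powereq also works}. Hence $M^{R\otimes R'}(x\hat{\otimes}1+M^R(x)\hat{\otimes}\epsilon)=M^R(x)\hat{\otimes}1+\epsilon\,DM^R(x)[M^R(x)]$, and comparison with the right-hand side of Equation \eqref{powereqalt} yields the claimed equivalence.

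The main obstacle — and the sole place where the standing filtration hypotheses are genuinely needed — is justifying that all of these infinite sums converge and that the term-by-term expansion, the reindexing, and the extraction of the $\epsilon$-coefficient are legitimate equalities in the completed tensor product rather than merely formal manipulations. This is delicate precisely because $R$ carries a unit, so $x$ may have a genuine $\hat{\otimes}1$ component and the sums over $k$ need not terminate. Here I would use that the filtration is descending, bounded above, complete and compatible with the $\mmS L_\infty$-structure: compatibility forces $\mu_k(x,\ldots,x)$ into filtration degree that grows with $k$ (so the series defining $M^R(x)$ and each of the series above converge), while completeness $\mmg=\lim\limits_{\leftarrow}\mmg/\mmF_p\mmg$ guarantees these limits exist and that the rearrangements preserve them. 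With convergence thus secured, $M^R(x)$ is an honest element of $(\mmg\hat{\otimes}R)^1$ that may be substituted as a single argument, and the identity above is an equality of honest elements of $\mmg\hat{\otimes}R\otimes R'$, completing the proof.
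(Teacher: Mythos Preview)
Your proof is correct and follows essentially the same route as the paper: expand $M^{R\otimes R'}(x\hat{\otimes}1+M^R(x)\hat{\otimes}\epsilon)$ using $\epsilon^2=0$, identify the $\epsilon^0$-part as $M^R(x)\hat{\otimes}1$ and the $\epsilon^1$-part as $\sum_{l\geq 1}\tfrac{l}{l!}\mu_l(M^R(x),x,\ldots,x)=DM^R(x)[M^R(x)]$, and read off the equivalence. Your additional care with degree bookkeeping and convergence via the filtration hypotheses goes slightly beyond what the paper spells out, but the argument is the same.
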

			\begin{proof}[Proof of Lemma \ref{Both powereqs work}]
				Let $x$ be $x \in (\mmg \hat{\otimes} R)^0$ for $R:= \mathbb{K}[\epsilon_1, \ldots, \epsilon_n]/(\epsilon_1^2, \ldots, \epsilon_n^2)$ where $\epsilon_i$ is a formal variable of degree $-\abs{x_i}$.\\ 
				Using the definition of the derivative of a multilinear map 
				combined with the fact that for $\abs{x}=0$ graded symmetry allows us to change the order of arguments freely without picking up additional signs, we may find
				\begin{align}
					\label{DM explicit}
					DM^R(x)[M^R(x)] = \sum_{n \geq 1} \frac{1}{n!} \sum_{j=1}^n \mu_n(x, \ldots, x, \ubr{M^R(x)}_{j~\text{th position}},x , \ldots, x)\\
					\nonumber =\sum_{n \geq 1} \frac{n}{n!}\mu_n (M^R (x),x,\ldots,x) .
				\end{align}
				But on the other hand  
				\begin{align*}
					M^{R \otimes R^\prime} (x\hat{\otimes}1+ M^R (x)\hat{\otimes}\epsilon) = \sum_{n=0}^\infty \frac{1}{n!} \mu_n ( x\hat{\otimes}1 + M^R (x)\hat{\otimes} \epsilon, \ldots, x\hat{\otimes}1+ M^R (x)\hat{\otimes} \epsilon) \\
					\meqc{\epsilon^2=0} M^R(x)\hat{\otimes}1 + \sum_{n \geq 1} \frac{1}{n!} \sum_{j=1}^n \mu_n (x\hat{\otimes}1 \ldots,x \hat{\otimes}1, \ubr{M^R(x)\hat{\otimes} \epsilon}_{j~\text{th position}}, x\hat{\otimes}1, \ldots, x\hat{\otimes}1)\\
					= M^R(x)\hat{\otimes}1 + \sum_{n \geq 1} \frac{n}{n!} \mu_n ( M^R (x)\hat{\otimes} \epsilon, x \hat{\otimes}1,\ldots,x\hat{\otimes}1)
				\end{align*}
				also holds, and so it is immediate that $M^{R \otimes R^\prime}(x\hat{\otimes}1+ M(x)\hat{\otimes}1) = M^R(x)\hat{\otimes}1$ is tantamount to $DM^R(x)[M^R(x)]=0$.
			\end{proof}

			\bibliographystyle{plain}
			\bibliography{literaturverzeichnis}

\begin{thebibliography}{10}

\bibitem{What}
Vasily~A. {Dolgushev}, Alexander~E. {Hoffnung}, and Christopher~L. {Rogers}.
\newblock {What do homotopy algebras form?}
\newblock {\em arXiv e-prints}, page arXiv:1406.1751, June 2014.

\bibitem{DolgushevEnhancement}
Vasily~A. Dolgushev and Christopher~L. Rogers.
\newblock {On an Enhancement of the Category of Shifted L-infinity-Algebras}.
\newblock {\em {Applied Categorical Structures}}, {25}(4):489--503, {Aug}
  {2017}.

\bibitem{FreBuch}
Benoit Fresse.
\newblock {\em Homotopy of operads and Grothendieck-Teichmüller groups}.
\newblock Mathematical surveys and monographs vol. 217. American Mathematical
  Society, Providence R.I, 2017.

\bibitem{RatHomMap}
Benoit {Fresse}, Victor {Turchin}, and Thomas {Willwacher}.
\newblock {The rational homotopy of mapping spaces of E${}_n$ operads}.
\newblock {\em arXiv e-prints}, page arXiv:1703.06123, March 2017.

\bibitem{MappingSpaces}
Benoit {Fresse} and Thomas {Willwacher}.
\newblock {Mapping Spaces for DG Hopf Cooperads and Homotopy Automorphisms of
  the Rationalization of $E_n$-operads}.
\newblock {\em arXiv e-prints}, page arXiv:2003.02939, March 2020.

\bibitem{TonksBV}
Imma G{\'a}lvez-Carrillo, Andrew Tonks, and Bruno Valette.
\newblock Homotopy {B}atalin--{V}ilkovisky algebras.
\newblock {\em Journal of Noncommutative Geometry}, 6(3):539--602, 2012.

\bibitem{GetzlerJones}
Ezra {Getzler} and J.~D.~S. {Jones}.
\newblock {Operads, homotopy algebra and iterated integrals for double loop
  spaces}.
\newblock {\em arXiv e-prints}, pages hep--th/9403055, March 1994.

\bibitem{ZwiebachL}
Olaf {Hohm} and Barton {Zwiebach}.
\newblock {\ensuremath{L_\infty}-algebras and field theory}.
\newblock {\em Fortschritte der Physik}, 65(3-4):1700014, March 2017.

\bibitem{Loday}
Jean-Louis Loday and Bruno Vallette.
\newblock {\em Algebraic operads}, volume 346 of {\em Grundlehren der
  mathematischen Wissenschaften}.
\newblock Springer, Berlin, 2012.

\bibitem{MLHomology}
Saunders Mac~Lane.
\newblock {\em Homology}.
\newblock Die Grundlehren der mathematischen Wissenschaften in
  Einzeldarstellungen mit besonderer Berücksichtigung der Anwendungsgebiete
  Bd. 114. Springer-Verlag, Berlin, 1st ed. 1963, 4th pr. edition, 1994 - 1963.

\bibitem{Markl}
M.~Markl, S.~Shnider, and J.D. Stasheff.
\newblock {\em Operads in Algebra, Topology and Physics}.
\newblock Mathematical surveys and monographs. American Mathematical Society,
  2002.

\bibitem{Spanier}
Edwin~Henry Spanier.
\newblock {\em Algebraic topology}.
\newblock Springer-Verlag, New York, 1st corr. springer ed. edition, 1989 -
  1989.

\bibitem{ValHomotopy}
Bruno {Vallette}.
\newblock {Homotopy theory of homotopy algebras}.
\newblock {\em arXiv e-prints}, page arXiv:1411.5533, November 2014.

\bibitem{Weibel}
Charles~A. Weibel.
\newblock {\em An Introduction to Homological Algebra}, volume~38 of {\em
  Cambridge Studies in Advanced Mathematics}.
\newblock Cambridge University Press, Cambridge, 1994.

\end{thebibliography}
		\end{document}